\numberwithin{equation}{section}
\providecommand{\U}[1]{\protect\rule{.1in}{.1in}}
\newtheorem{theorem}{Theorem}[section]
\newtheorem{corollary}[theorem]{Corollary}
\newtheorem{lemma}[theorem]{Lemma}
\newtheorem{remark}[]{Remark}
\newtheorem{example}[theorem]{Example}
\newtheorem{examples}[theorem]{Examples}
\newtheorem{foo}[theorem]{Remarks}
\newtheorem{thmx}{Theorem}
\DeclareMathOperator\arctanh{arctanh}
\newcommand{\norm}[1]{{\left\lVert{#1}\right\rVert}}
\newcommand{\dvg}{\text{div}}
\newcommand{\ric}{\text{Ric}}
\newcommand{\id}{\text{Id}}
\newcommand{\mathleft}{\@fleqntrue\@mathmargin0pt}
\newcommand{\mathcenter}{\@fleqnfalse}
\def\vint{\mathop{\mathchoice%
          {\setbox0\hbox{$\displaystyle\intop$}\kern 0.22\wd0%
           \vcenter{\hrule width 0.6\wd0}\kern -0.82\wd0}%
          {\setbox0\hbox{$\textstyle\intop$}\kern 0.2\wd0%
           \vcenter{\hrule width 0.6\wd0}\kern -0.8\wd0}%
          {\setbox0\hbox{$\scriptstyle\intop$}\kern 0.2\wd0%
           \vcenter{\hrule width 0.6\wd0}\kern -0.8\wd0}%
          {\setbox0\hbox{$\scriptscriptstyle\intop$}\kern 0.2\wd0%
           \vcenter{\hrule width 0.6\wd0}\kern -0.8\wd0}}%
          \mathopen{}\int}
\newcommand{\hn}{\mathbb{H}^n}
\newcommand{\bn}{\mathbb{B}^n}
\newcommand{\rn}{\mathbb{R}^n}
\newcommand{\sn}{\mathbb{S}^n}
\newcommand{\inth}{\int_{\mathbb{H}^n}}
\newcommand{\cs}{C^\infty_c}
\newcommand{\tx}{\tilde{x}}
\DeclareMathOperator{\csch}{csch}
\begin{document}

\title{The Method of Moving Spheres on the Hyperbolic Space and the Classification of Solutions and the prescribed Q-curvature problem }
\thanks{The second and third authors' research was partly supported by a Simons Collaboration grant from the Simons Foundations.}
\author{Jungang Li, Guozhen Lu, Jianxiong Wang}
\address{Jungang Li: Department of Mathematics\\University of Science and Technology of China\\Heifei, Anhui, China.}
\email{jungangli@ustc.edu.cn}
\address{Guozhen Lu: Department of Mathematics\\University of Connecticut\\Storrs, CT 06269, USA.}
\email{guozhen.lu@uconn.edu}
\address{Jianxiong Wang: Department of Mathematics\\Rutgers University\\Piscataway, NJ 08854, USA.}
\email{jiangxiong.wang@rutgers.edu}
\date{}
\begin{abstract}
The classification of solutions to semilinear partial differential equations, as well as the classification of
critical points of the corresponding functionals, has wide applications in the study of partial differential equations and differential geometry. The classical moving plane method and the method of moving sphere on the Euclidean space $\mathbb{R}^n$ provide an effective approach to capture the symmetry of solutions. To the best of our knowledge, the moving sphere method has yet to be developed on the hyperbolic space $\mathbb{H}^n$. In the present paper, we focus on the following equation
\begin{equation*}
  P_k u = f(u)
\end{equation*}
on hyperbolic spaces $\mathbb{H}^n$, where $P_k$ denotes the GJMS operators on $\mathbb{H}^n$ and
$f : \mathbb{R} \to \mathbb{R}$ satisfies certain growth conditions.
We develop a moving sphere approach on $\mathbb{H}^n$ to obtain the symmetry property
as well as the classification of positive solutions to the above equation.
Our methods also rely on the Helgason-Fourier analysis and Hardy-Littlewood-Sobolev inequalities on hyperbolic space together with a Kelvin transform we introduce on the hyperbolic space in this paper. We also present applications to the higher order prescribed $Q$-curvature problem on the hyperbolic space.

\end{abstract}
\keywords{Moving sphere method on hyperbolic spaces; moving plane method; GJMS operator}
\setlength\parindent{0pt}
\maketitle

\section{Introduction}
In the celebrated work of Br\'ezis-Nirenberg \cite{BN}, the existence and nonexistence of positive solutions of the
following equation on a bounded domain in Euclidean space  $\rn$ has been systematically studied:
\begin{equation}\label{BN}
  \begin{cases}
    -\Delta u - \lambda u = u^{\frac{n+2}{n-2}} \ &\textup{in } \Omega,  \\
    u > 0 \ &\textup{in } \Omega, \\
    u = 0 \ &\textup{on } \partial \Omega.
  \end{cases}
\end{equation}
Among other results, Br\'ezis-Nirenberg proved: when $n \geq 4$, a solution exists when $0 < \lambda < \Lambda(\Omega)$, where $\Lambda(\Omega)$ is the first Dirichlet eigenvalue; when $n = 3$ and $\Omega$ is a ball, a solution exists if and only if $\frac{\Lambda(\Omega)}{4} < \lambda < \Lambda(\Omega)$. This result relies on carefully examining the following quotient
$$
  S_{n,\lambda} = \inf_{u \in C_0^\infty (\Omega)} \frac{\int_\Omega |\nabla u |^2 - \lambda u^2 }{ \left( \int_{\Omega} |u|^{\frac{2n}{n-2}} \right)^{\frac{n-2}{n}} }
$$
with the Sobolev best constant
$$
  S_n = \inf_{u \in C_0^\infty (\Omega)} \frac{\int_\Omega |\nabla u |^2 }{ \left( \int_{\Omega} |u|^{\frac{2n}{n-2}} \right)^{\frac{n-2}{2}} },
$$
where the latter quantity has been thoroughly studied by Aubin \cite{Aubin} and  Talenti \cite{Talenti1}. Moreover, through a symmetrization approach, Talenti obtained the extremal function of the Sobolev inequality with the unique form (up to translations and dilations):
$$
  u(x) = \frac{1}{\left( a + b|x|^2 \right)^{\frac{n-2}{2}}}.
$$
On the other hand, in the celebrated work of Gidas-Ni-Nirenberg \cite{GNN1}, they considered the following boundary value problem on the ball $B_R(0)\subset{\rn}$.
\begin{equation*}
    \begin{cases}
        -\Delta u = f(u) &\text{ in } B_R(0)\\
         u=0 &\text{ on } \partial B_R(0),
    \end{cases}
\end{equation*}
where $f$ is of class $C^1$. They proved that any positive solution $u$ in $C^2(\overline{B_R(0)})$ is radially symmetric and decreasing.
Their approach is the so-called moving plane method, which was initiated by Alexandrov \cite{Alexandrov} in the 1950s, and further developed by Serrin \cite{serrin}.
Later, Gidas-Ni-Nirenberg \cite{GNN2} studied the equation $-\Delta u = f(u)$ in the entire space $\rn\setminus\{0,\infty\}, n\geq 3$
with two singularities located at the origin and infinity:
\begin{align}
  u(x)\to+\infty \quad &\text{ as } \quad x\to0. \nonumber\\
  |x|^{n-2}u(x)\to+\infty \quad &\text{ as } \quad x\to\infty.
\end{align}
They showed that the solution is radially symmetric about the origin and decreasing.
Subsequently, Caffarelli-Gidas-Spruck \cite{CGS} considered this equation in a punctured ball when $f$ has critical growth.
To be more precise, they require the nonlinearity $f(t)$ to be a locally nondecreasing Lipschitz function with $f(0)=0$, and to satisfy the following condition:
for sufficiently large $t$, the function $t^{-\frac{n+2}{n-2}}f(t)$ is nonincreasing and $f(t)\geq ct^p$ for some $p\geq \frac{n}{n-2}$.
They showed that the solution $u$ is radially symmetric and decreasing.

The moving plane method was later extended to the study of higher order equations on $\mathbb{R}^n$.
The main challenge here is that the moving plane method heavily depends on the maximum principle, which does not always hold for higher order operators. The alternative Hopf type lemma can only be obtained in some special cases; see \cite{BerchioGazzolaWeth1} for example.
To overcome this obstacle, Chen, Li and Ou \cites{CLO,ChenLiOu2} developed a powerful moving plane method for integral equations, obtaining the symmetry of solutions for higher order and even fractional order equations. More precisely, they proved that for the following equation.
$$
  (- \Delta )^{\frac{\alpha}{2}} u = u^{\frac{n+\alpha}{n - \alpha}}
$$
on $\mathbb{R}^n$, every positive regular solution (i.e. locally $L^{\frac{2n}{n-\alpha}}$ solutions) $u$
is radially symmetric and decreasing about some point $x_0$ and therefore assumes the form (up to some dilations):
$$
  u(x) = \frac{1}{\left( a + b |x - x_0|^2 \right)^{\frac{n-\alpha}{2}}}.
$$
Their work completely classifies all the critical points of the functional corresponding to the Hardy-Littlewood-Sobolev
inequalities of order $\alpha$, whose sharp constant was previously obtained by Lieb \cite{Lieb1}.
Inspired by Chen, Li and Ou's work, the first two authors with Yang in \cite{LLY} recently developed a moving plane method for integral equations
on hyperbolic spaces $\mathbb{H}^n$. We hence obtained, among other results, the symmetry of positive solutions of
the following higher order equations on the whole $\mathbb{H}^n$:
\begin{equation}\label{higherorderHn}
  P_k u - \lambda u = u^{\frac{n+2k}{n-2k}},
\end{equation}
where $P_k$ is the GJMS operator. Indeed, our interest in studying the GJMS operators is natural.
It is known that one of the basic tools in conformal geometry to understand the geometry of the underlying space is by studying the transformations on the space.
In the past decades, there have been plenty of studies on the conformally covariant operators as well as their associated equations.
Generally, let $(M^n, g)$ be a smooth Riemannian manifold of dimension $n\geq 2$ with metric $g$. Two metrics $g, \tilde{g}$ are
conformally related   if
\[\tilde{g} = e^{2w} g, \quad w\in C^\infty(M^n).\]
We say that an operator $A_g$ is conformally covariant of bi-degree $(a, b)$ if under the conformal change of metric $\tilde{g}=e^wg$,
$A$ satisfies
\[A_{\tilde{g}}\varphi = e^{-bw}A_g(e^{aw}\varphi), \text{ for all } \varphi\in C^\infty(M),\]
for some $a,b\in\mathbb{R}$.
One of the most well-known examples of conformally covariant operators is the conformal Laplacian
\[L_g:=-\Delta_g+\frac{n-2}{4(n-1)}R_g,\]
where $\Delta_g$ denotes the Laplace-Beltrami operator and $R_g$ is the scalar curvature on $(M^n,g)$. The conformal Laplacian enjoys the conformal transformation property in the following sense:
\[L_{g_u}(\cdot)=u^{-\frac{n+2}{n-2}}L_g(u\cdot),\]
under the metric change
\[g_u=u^{\frac{4}{n-2}}g,\quad u>0.\]
If one prescribes the constant scalar curvature for the metric $g_u$, then $u$ satisfies the following Yamabe equation
\begin{equation}\label{yamabe}
  -\Delta_gu+\frac{n-2}{4(n-1)}R_gu=\frac{n-2}{4(n-1)}cu^{\frac{n+2}{n-2}}.
\end{equation}

There are many operators besides the conformal Laplacian $L_g$ on general Riemannian manifolds of dimension greater than two, which enjoy a conformal covariance property.
For example, the Paneitz operator $P_2$ of fourth order, which is defined as the bi-Laplacian $(-\Delta_g)^2$ plus the lower order curvature terms,
has been studied by many authors.
Interested readers are also referred to the survey article of Chang \cite{Chang} for more background information on the Paneitz operator.
It turns out that both conformal Laplacian and Paneitz operator are two special cases of a general hierarchy of conformally covariant operators,
which are now known as the GJMS operators introduced by Graham-Jenne-Mason-Sparling in \cite{GJMS2} based on the construction of ambient metric by C. Fefferman-Graham \cite{FeffermanGr}.
We refer the reader to works by C. Fefferman-Graham \cite{FeffermanGr1} and Juhl \cite{GJMS3} for more properties of  GJMS operators.

In the present paper, the background manifold is the hyperbolic space $\hn$ with standard metric, in which the GJMS operators can be inductively defined as follows:
\begin{equation*}
    P_k=P_1(P_1+2)\cdots(P_1+k(k-1)), k\in\mathbb{N}.
\end{equation*}
To illustrate our main results, we recall some previous works and make some comments related to the Problem (\ref{higherorderHn}),
which can be viewed as a higher order version of Br\'ezis-Nirenberg problem on $\mathbb{H}^n$.
Indeed, shortly after Br\'ezis-Nirenberg's work \cite{BN}, considerable effort has been made to extend it to polyharmonic
equations on $\mathbb{R}^n$. One of the higher order versions of Problem (\ref{BN}) can be formulated as follows:
\begin{equation}\label{Pucci-Serrin}
  \begin{cases}
    (- \Delta)^k u = \lambda u + |u|^{q - 2} u \ \ \ &\text{on } \Omega \\
    u = D u = \cdots = D^\alpha u = 0 \ \ \ &\text{on } \partial \Omega \\
    |\alpha| \leq k-1,
  \end{cases}
\end{equation}
where $\Omega\subset \mathbb{R}^n$ is a bounded domain, $n > 2k$ and $q = \frac{2n}{n - 2k}$ is the corresponding critical
Sobolev exponent. Gazzola \cite{Gazzola1} proved that when $n \geq 4k$, for for every $\lambda \in (0 , \Lambda_1( (-\Delta)^k , \Omega))$
there exists a solution $u \in W^{k,2}_0(\Omega)$ to the Dirichlet problem (\ref{Pucci-Serrin}), where $\Lambda_1( (-\Delta)^k , \Omega)$
denotes the corresponding first Dirichlet eigenvalue; when $2k + 1 \leq n \leq 4k - 1$, it was also shown that there
exists  $0 < \overline{\Lambda} < \Lambda_1( (-\Delta)^k , \Omega)) $ such that for every $\lambda \in (\overline{\Lambda}, \Lambda_1( (-\Delta)^k , \Omega)))$
the Dirichlet problem (\ref{Pucci-Serrin}) has a solution $u \in W^{k,2}_0(\Omega)$. Interested readers can also see
\cites{Grunau1,Grunau2,PucciSerrin1,PucciSerrin2} for references in this direction. Br\'ezis-Nirenberg type problems
(e.g. (\ref{BN}), (\ref{Pucci-Serrin}) and (\ref{higherorderHn})) deal with semilinear equations whose nonlinearity is equipped with the critical Sobolev exponent.
Due to the lack of compactness, the classical variational method cannot work directly. Such phenomenon appears in many important areas in mathematical physics, partial differential equations, and conformal geometry.

The Poincar\'e ball model of the $n$-dimensional hyperbolic space $\mathbb{H}^n$ is the unit ball $B(0) \subset \mathbb{R}^n$ equipped with the Poincar\'e metric $ds^2 = \frac{4 (dx_1^2 + \cdots + dx_n^2)}{(1 - |x|^2)^2} $.
The hyperbolic space serves as one of the most important models of Riemannian manifolds with constant curvature, whence the Br\'ezis-Nirenberg type problem on $\mathbb{H}^n$ receives much attention. When $k = 1$, the second order GJMS operator is nothing but the classical conformal Laplacian and the Problem (\ref{higherorderHn}) has been extensively studied in \cites{Benguria1,ManciniSandeep1,Stapelkamp1}.
As for higher order cases, consider the conformal transform from the Euclidean unit ball in $\mathbb{R}^n$ to
 $\mathbb{H}^n$ (the Poincar\'e ball model), the $2k$-th order GJMS operator (throughout this paper, we assume $n>2k$),
where $P_1 = - \Delta_{\mathbb{H}} - \frac{n(n-2)}{4}$ is the conformal Laplacian on $\mathbb{H}^n$, is exactly
the pushforward of $(-\Delta)^k$ upon the conformal transform.
To be more precise, we have the following identity (see \cite{Liu}) :
$$
  P_k u = \left( \frac{2}{1 - |x|^2} \right)^{- (\frac{n}{2} + k)} (-\Delta)^k \left( \left( \frac{2}{1 - |x|^2} \right)^{\frac{n}{2} - k} u \right).
$$
Therefore, Problem (\ref{higherorderHn}) is a natural generalization of Problem (\ref{Pucci-Serrin}).
Due to the complexity of the operator, unlike the treatment for the second order problem, one cannot hope to reduce
the Problem (\ref{higherorderHn}) to $\mathbb{R}^n$ and modify the existing argument on $\mathbb{R}^n$.
To overcome the difficulty, in \cite{LLY} Yang and the first two authors developed a new approach, which combines
the Helgason-Fourier analysis theory, the Hardy-Littlewood-Sobolev inequalities, Hardy-Sobolev-Maz'ya inequalities,
and Green's function estimates for the GJMS operators $P_k$ on hyperbolic spaces. The first two authors established the existence and nonexistence results on the bounded domain in $\mathbb{H}^n$ as well as on the entire space $\mathbb{H}^n$. Moreover, using our moving plane method for integral equations on $\mathbb{H}^n$, we proved that every positive solution of the Problem (\ref{higherorderHn}) is radially symmetric and decreasing with respect to the geodesic distance. More recently, we generalized the symmetry result to the equation with a wider family of nonlinearities
\cite{LLW}. More precisely, among other results, we proved the following theorem.
\begin{thmx}
    Let $k \geq 2$, consider the following equation on $\mathbb{H}^n$:
    \begin{equation*}
      P_k u = f(u),
    \end{equation*}
    where $f$ is Lipschitz continuous and non-decreasing. If further assume that $f'(u) \in L^{\frac{n}{2k}} (\mathbb{H}^n)$, where $u \in W_0^{k,2} (\mathbb{H}^n)$ is a positive solution, then there exists a point $P \in \mathbb{H}^n$ such that $u$ is constant on each geodesic sphere centered at $P$. Moreover, $u$ is radially non-increasing.
\end{thmx}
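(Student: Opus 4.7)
The plan is to carry out an integral-equation moving plane method on $\mathbb{H}^n$ in the spirit of the scheme developed in \cite{LLY}, replacing the critical-power argument by an integrability argument that exploits the hypothesis $f'(u)\in L^{n/2k}(\mathbb{H}^n)$. First I invert $P_k$ and rewrite the PDE as the integral equation
\begin{equation*}
    u(x)=\int_{\mathbb{H}^n}G_k(x,y)\,f(u(y))\,dV(y),
\end{equation*}
where $G_k$ is the Green function of $P_k$ on $\mathbb{H}^n$. This is justified for $u\in W_0^{k,2}(\mathbb{H}^n)$ because $P_k=P_1(P_1+2)\cdots(P_1+k(k-1))$ is a product of strictly positive self-adjoint operators (the bottom of the spectrum of $P_1$ is $1/4$), and the right-hand side converges by the Green-function decay estimates established in \cite{LLY}, combined with the Sobolev embedding $W_0^{k,2}(\mathbb{H}^n)\hookrightarrow L^{2n/(n-2k)}(\mathbb{H}^n)$ and the Lipschitz property of $f$. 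The decisive structural facts are that $G_k(x,y)$ is a positive strictly decreasing function of $d(x,y)$ and that $G_k(\phi x,\phi y)=G_k(x,y)$ for every isometry $\phi$ of $\mathbb{H}^n$.

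For each unit-speed geodesic $\gamma$ and each $\lambda\in\mathbb{R}$, let $T_\lambda$ be the totally geodesic hyperplane orthogonal to $\gamma$ at $\gamma(\lambda)$, $\Sigma_\lambda$ the open half-space on one distinguished side, and $\phi_\lambda$ the reflection isometry across $T_\lambda$. Setting $u_\lambda=u\circ\phi_\lambda$ and $w_\lambda=u_\lambda-u$, the isometric invariance of $G_k$ together with the splitting $\mathbb{H}^n=\Sigma_\lambda\cup T_\lambda\cup\phi_\lambda(\Sigma_\lambda)$ yields, for $x\in\Sigma_\lambda$,
\begin{equation*}
    w_\lambda(x)=\int_{\Sigma_\lambda}\bigl[G_k(x,y)-G_k(x,\phi_\lambda y)\bigr]\bigl[f(u_\lambda(y))-f(u(y))\bigr]\,dV(y),
\end{equation*}
whose kernel is strictly positive since $d(x,y)<d(x,\phi_\lambda y)$ whenever $x,y\in\Sigma_\lambda$. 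By the monotonicity of $f$, write $f(u_\lambda)-f(u)=f'(\xi)\,w_\lambda$ with $f'(\xi)\ge 0$. Restricting to $\Sigma_\lambda^-=\{x\in\Sigma_\lambda:w_\lambda(x)<0\}$, dominating the kernel by $G_k(x,y)$, and applying the Hardy--Littlewood--Sobolev inequality on $\mathbb{H}^n$ followed by H\"older's inequality gives
\begin{equation*}
    \|w_\lambda^-\|_{L^{2n/(n-2k)}(\Sigma_\lambda^-)}\le C\,\|f'(\xi)\|_{L^{n/2k}(\Sigma_\lambda^-)}\,\|w_\lambda^-\|_{L^{2n/(n-2k)}(\Sigma_\lambda^-)}.
\end{equation*}
Since $f'(u)\in L^{n/2k}(\mathbb{H}^n)$, for $\lambda$ sufficiently negative the coefficient $C\|f'(\xi)\|_{L^{n/2k}(\Sigma_\lambda^-)}$ is less than $1/2$, which forces $w_\lambda^-\equiv 0$ and hence $w_\lambda\ge 0$ on $\Sigma_\lambda$; this provides the startup of the moving plane.

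I then define $\bar\lambda=\sup\{\lambda:w_\mu\ge 0\text{ on }\Sigma_\mu\text{ for every }\mu\le\lambda\}$ and argue by contradiction that $u$ must be symmetric across $T_{\bar\lambda}$: if not, one can propagate the integral inequality to $\Sigma_{\bar\lambda+\epsilon}^-$ for small $\epsilon>0$, since the $L^{n/2k}$ tail of $f'(\xi)$ over $\Sigma_{\bar\lambda+\epsilon}^-$ remains small by absolute continuity of the integral, thereby extending $w_\mu\ge 0$ past $\bar\lambda$ and contradicting its maximality. Executing this argument in every geodesic direction through an arbitrary base point, and using that in hyperbolic geometry the resulting limiting symmetry hyperplanes meet in a single common point $P\in\mathbb{H}^n$, we deduce that $u$ is invariant under every reflection fixing $P$; hence $u$ is constant on each geodesic sphere centered at $P$, while the sign $w_\lambda\ge 0$ on every $\Sigma_\lambda$ encodes radial non-increasingness. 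The principal obstacle I anticipate is the startup step: without a power-type nonlinearity one has no sharp a priori decay rate for $u$, so the smallness of $\|f'(\xi)\|_{L^{n/2k}(\Sigma_\lambda^-)}$ for $\lambda\ll 0$ must be extracted purely from the integrability hypothesis together with a decay estimate for $u$ at infinity derived from the integral representation itself — a step that is delicate in the hyperbolic setting because every half-space $\Sigma_\lambda$ remains non-compact and of infinite volume.
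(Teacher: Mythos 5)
Your proposal follows the same integral-equation moving plane scheme the paper uses for its hyperbolic symmetry results: convert the PDE to $u=\int_{\mathbb{H}^n}G_k(\cdot,y)f(u(y))\,dV$, reflect across the totally geodesic hyperplanes $T_\lambda$ of the foliation, derive the antisymmetric identity
\begin{equation*}
w_\lambda(x)=\int_{\Sigma_\lambda}\bigl[G_k(x,y)-G_k(x,\phi_\lambda y)\bigr]\bigl[f(u_\lambda(y))-f(u(y))\bigr]\,dV(y),
\end{equation*}
with positive kernel, and close via Hardy--Littlewood--Sobolev plus H\"older. The decomposition, the isometry invariance of $G_k$, the kernel positivity, and the overall startup/propagation skeleton all match the paper's Section 4--5 arguments.

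The point you flag at the end is, as written, a genuine gap. You write $f(u_\lambda)-f(u)=f'(\xi)\,w_\lambda$ and then claim $\|f'(\xi)\|_{L^{n/2k}(\Sigma_\lambda^-)}\to 0$ as $\lambda\to-\infty$ ``since $f'(u)\in L^{n/2k}(\mathbb{H}^n)$''. But the hypothesis controls $f'(u)$, not $f'(\xi)$, and on your bad set $\Sigma_\lambda^-=\{u_\lambda<u\}$ the intermediate value $\xi$ lies below $u$; the comparison $f'(\xi)\le f'(u)$ requires $f'$ non-decreasing, which is not among the stated hypotheses. The paper's own Lemma \ref{start1} and Lemma \ref{init2} make exactly the same substitution (replacing $f'(\xi)$ by $f'(u_\lambda)$ or $f'(u)$), which is why Theorem \ref{thm2} adds the explicit hypothesis ``$f'(t)$ non-decreasing''. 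Once that comparison is available the argument does close: $f'(\xi)\le f'(u)$ on $\Sigma_\lambda^-$, the half-spaces $\Sigma_\lambda$ decrease to a null set as $\lambda\to-\infty$, and dominated convergence together with the global $L^{n/2k}$ bound supplies the required smallness. Alternatively, one can work with $\{u_\lambda>u\}$, bound $f'(\xi)\le f'(u_\lambda)$, and change variables through the reflection isometry $\phi_\lambda$ to relocate the $L^{n/2k}$ norm; but some comparison principle for $f'$ beyond ``$f$ Lipschitz and non-decreasing'' is needed for the moving plane to start. A smaller omission: the claim that the limiting symmetry hyperplanes from all directions pass through a single common point $P$ deserves an argument --- for instance that each such hyperplane must contain the maximum point of $u$, which exists because $u\in W_0^{k,2}(\mathbb{H}^n)$ forces decay at infinity.
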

The moving plane method on $\mathbb{H}^n$ was first seen in the work of Kumaresan-Prajapat  \cites{K-P1,K-P2}, where they established the analog result of Gidas-Ni-Nireneberg type, as well as solved an overdetermined problem on $\mathbb{H}^n$, which is an analogue of the problem on $\mathbb{R}^n$ initiated by Serrin \cite{serrin} (see also \cite{HLZ}, \cite{L-Z2}). The axial symmetry of solutions to a class of integral equations on half spaces was  studied by Lu-Zhu in \cite{L-Z}.  More recently, the overdetermined problem of fractional order equations on hyperbolic spaces was studied by Li-Lu-Wang \cite{LLW2}. The concept of the moving plane method in \cites{K-P1,K-P2} was further developed in the work of Almeida-Ge \cite{ADG2} and Almeida-Damascelli-Ge \cite{ADG1},
where they took advantage of the foliation structure of $\mathbb{H}^n$ (for details see Section \ref{sec2.2}). Such a moving plane method was used earlier in the study of constant mean curvature surface problem by Korevaar, Kusner, Meeks and Solomon \cites{KK,KKMS}. Moreover, as long as there is a certain group of symmetries (either isometries or conformal maps) for the underlying space which also interacts nicely with the given equation, then one can develop these techniques similar to the moving plane method in the Euclidean space. These have been demonstrated nicely by Birindelli and Mazzeo \cite{BirindelliMazzeo}. See also the expository article  \cite{Chow}. Recently. we also obtained a higher order symmetry result of the Brezis-Nirenberg problem on the hyperbolic spaces \cite{LLY} and more general higher order equations \cite{LLW} by using the moving plane method on the hyperbolic space.

The main goal of the present paper is to develop a different approach from those works mentioned above on 
the hyperbolic spaces, namely the moving sphere approach on $\mathbb{H}^n$, to study the symmetry of solutions 
to the equation \eqref{p_k}. The moving sphere method on Euclidean spaces has been used as a variant of 
the moving plane method to study the classification of solutions of partial differential equations. 
This method has produced fruitful results in the past decades, see, for example, Padilla \cite{Padilla}, 
Chen and Li \cite{ChenLi2} and Li and Zhu \cite{Li-Z}. 
In particular, Li and Zhu used the moving sphere method to prove that nontrivial nonnegative solutions of 
some boundary value problems on the upper half space must take a particular form; Chen and Li \cite{ChenLi2} applied the moving sphere method to study the solution of Nirenberg problem; Li \cite{Li} used the moving sphere method to study some conformal invariant integral equations; more recently, Chen, Li and Zhang \cite{CLZ} developed a direct moving sphere method to study fractional order equations without transforming them to integral forms. 
Interested readers can also refer to the book \cite{ChenLi1} (Chapter 8) for a systematic discussion about this method.  

However, as far as we know, such a method has not been established on hyperbolic spaces. One of our main contributions in the present paper is to introduce a Kelvin-type transform on hyperbolic spaces and use it to develop a moving sphere method to study some higher order equations and integral equations. To be precise, we consider the higher order equation in $\hn$:
\begin{equation}\label{p_k}
  P_ku = f(u),
\end{equation}
and our first result reads as follows:
\begin{theorem}\label{thm1}
  Let $1\leq k < \frac{n}{2}$ and $f: [0,\infty)\to\mathbb{R}$ be Lipschitz continuous and nondecreasing such that $f(0)=0, \frac{f(t)}{t^p}$ is non-increasing with $p=\frac{n+2k}{n-2k}$.
  If $u\in L^{\frac{2n}{n-2k}}_{loc}(\hn)$ is a solution of \eqref{p_k} and $f^\prime(u)\in L_{loc}^{\frac{n}{2k}}(\hn)$, then one of the followings holds:
  \begin{enumerate}
    \item $u(x)\equiv C_0$ for some $C_0\geq 0$.
    \item There exists a point $P \in \mathbb{H}^n$ such that $u$ is radially symmetric with respect to $P$. Moreover, $u$ has the form
    \begin{equation}\label{soln}
      u(x)=\frac{\alpha}{(\cosh^2\frac{r(x)}{2}+\beta)^{\frac{n-2k}{2}}},
    \end{equation}
    where $\alpha, \beta$ are some constants, $r(x)$ is the geodesic distance from $x$ to $P$.
  \end{enumerate}
\end{theorem}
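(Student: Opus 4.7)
The plan is to first convert \eqref{p_k} into an integral equation using the Green's function $G_k$ of the GJMS operator $P_k$ on $\hn$, so that any positive solution satisfies
\[
  u(x) \;=\; \int_{\hn} G_k(x,y)\, f(u(y))\, dV(y).
\]
Here $G_k$ is positive and enjoys good decay, both of which come from the Helgason-Fourier analysis and the Hardy-Littlewood-Sobolev inequality on $\hn$ developed earlier in the paper. Working with the integral form bypasses the absence of a direct maximum principle for the higher order operator $P_k$, in the spirit of Chen-Li-Ou \cite{CLO,ChenLiOu2} and of the first two authors' earlier work \cite{LLY,LLW}.

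Next I would carry out the moving sphere method using the hyperbolic Kelvin transform that is the main new tool of the paper. For each $P\in\hn$ and $\lambda>0$, let $u_{P,\lambda}$ denote the Kelvin transform of $u$ with respect to the geodesic sphere of radius $\lambda$ centered at $P$. The decisive property is the conformal covariance of $P_k$ under this transform, which implies that $u_{P,\lambda}$ satisfies an analogous integral representation on the exterior of the sphere; the critical Sobolev exponent $p=\frac{n+2k}{n-2k}$ is exactly the one that makes the pure-power nonlinearity transform cleanly, and the monotonicity of $t\mapsto f(t)/t^p$ then yields a one-sided comparison between $f(u)$ and $f(u_{P,\lambda})$ on the region $\{u\ge u_{P,\lambda}\}$. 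Setting $w_\lambda:=u-u_{P,\lambda}$ on the exterior of the sphere, I would combine this comparison with the positivity of $G_k$ and a Hardy-Littlewood-Sobolev estimate on $\hn$, using the hypothesis $f'(u)\in L^{n/(2k)}_{\mathrm{loc}}(\hn)$ to absorb the bad terms, and conclude that $w_\lambda\ge 0$ on the exterior of the sphere for every sufficiently small $\lambda>0$. This starts the moving sphere procedure.

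Having initiated the moving sphere, I would define
\[
  \bar\lambda(P)\;:=\;\sup\Big\{\lambda>0 : w_\mu\ge 0 \text{ on the exterior of } S_P(\mu) \text{ for all } 0<\mu\le\lambda\Big\},
\]
and split into two cases. If $\bar\lambda(P)=\infty$ for every $P\in\hn$, then a pointwise argument shows that $u$ is monotone with respect to the geodesic distance to every point, which forces $u$ to be constant and gives alternative (1). If instead $\bar\lambda(P_0)<\infty$ for some $P_0$, then by continuity, maximality, and the strict sign in the integral comparison, one must have $w_{\bar\lambda(P_0)}\equiv 0$; i.e.\ $u$ is fixed by the hyperbolic Kelvin transform with respect to $S_{P_0}(\bar\lambda(P_0))$. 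Invoking a Li-Zhu style calculus lemma (as in \cite{Li-Z}) to translate this invariance into radial symmetry and the prescribed decay profile, I obtain the explicit bubble form \eqref{soln} centered at $P_0$, i.e.\ alternative (2).

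I expect the main obstacle to be the construction and the transformation calculus for the hyperbolic Kelvin transform: on $\rn$ the Kelvin map is algebraically transparent because of conformal flatness and the explicit Riesz potentials, while on $\hn$ one has to reconcile the conformal action with the hyperbolic metric and verify the conformal covariance of $P_k$, together with the matching scaling of $G_k$, in a form suitable for pointwise integral comparison. A secondary difficulty is the initial step: starting the moving sphere at small $\lambda$ cannot rely on the Euclidean HLS inequality but must use its $\hn$-analogue, and this is where the Fourier-analytic machinery and the local integrability assumption $f'(u)\in L^{n/(2k)}_{\mathrm{loc}}(\hn)$ enter most crucially.
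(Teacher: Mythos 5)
Your proposal follows essentially the same route the paper takes: integral reformulation via the Green's function of $P_k$ (Lemma \ref{lemma1}), the hyperbolic Kelvin transform and its conformal covariance, starting and continuing the moving sphere via the Hardy--Littlewood--Sobolev inequality on $\hn$ with $f'(u)\in L^{n/(2k)}_{\mathrm{loc}}$ absorbing the bad term, and the dichotomy on the critical radius $\bar\lambda(P)$ giving constancy or symmetry.

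Two places deserve a flag, though they do not change the route. First, the dichotomy requires not just that $\bar\lambda(P_0)<\infty$ for one $P_0$, but that this propagates to every center; the paper proves this (Lemma \ref{globalness_lambda}) by comparing the Kelvin invariance at $P_0$ with the would-be infinitude of $\bar\lambda$ at another point and deriving a contradiction with $u>0$, and this step is needed before running the final symmetry argument. Second, your ``invoke a Li--Zhu style calculus lemma'' step is not a black box here: there is no off-the-shelf hyperbolic analogue of the Li--Zhu calculus lemma, and the paper instead extracts the explicit profile \eqref{soln} by hand, writing the Kelvin-invariance identity in geodesic polar coordinates, Taylor-expanding both sides at infinity and at the center, and matching coefficients to produce an ODE whose unique solution is the hyperbolic bubble from \cite{Liu}. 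So the ``calculus lemma'' you cite is exactly the nontrivial content of Lemmas \ref{constant_case} and \ref{symm_case}, and a complete proof would need to reproduce that computation rather than appeal to \cite{Li-Z}.
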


By plugging the explicit form of the solution \eqref{soln}, we have the following result immediately.
\begin{corollary}\label{cor1}
  Suppose that $f$ is Lipschitz continuous and nondecreasing such that $f(0)=0$,
  $\frac{f(t)}{t^p}$ is non-increasing with $p=\frac{n+2k}{n-2k}$. If $u$ is a positive nontrivial solution of \eqref{p_k} such that $f^\prime(u)\in L_{loc}^{\frac{n}{2k}}(\hn)$,
  then $f(t)=ct^{\frac{n+2k}{n-2k}}$, for $t\in[0, \max_xu(x)]$.
\end{corollary}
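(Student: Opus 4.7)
My plan is to combine Theorem \ref{thm1} with the conformal covariance of $P_k$ to reduce the corollary to a classical computation on an Aubin--Talenti bubble in Euclidean space. Applying Theorem \ref{thm1}, either $u$ is constant or $u(x) = \alpha\bigl(\cosh^2(r(x)/2) + \beta\bigr)^{-(n-2k)/2}$ about some point $P \in \mathbb{H}^n$. The constant branch $u \equiv C_0 > 0$ is ruled out by interpreting ``nontrivial'' as non-constant, as the conclusion on $[0,\max u]$ demands; one can further note that for odd $k$ it is automatically incompatible with $f \geq 0$, since a direct computation gives $P_k C_0 = \mu_k C_0$ with $\mu_k = \prod_{j=1}^{k}\bigl(-\tfrac{(n-2j)(n+2j-2)}{4}\bigr) < 0$. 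Thus I may assume $u$ has the form \eqref{soln}.

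Next I would pass to the Poincar\'e ball model centered at $P$, using $\cosh^2(r(x)/2) = 1/(1-|x|^2)$ to rewrite
\[
u(x) = \alpha\,\biggl(\frac{1-|x|^2}{(1+\beta) - \beta|x|^2}\biggr)^{(n-2k)/2}.
\]
Setting $v(x) := \bigl(\tfrac{2}{1-|x|^2}\bigr)^{(n-2k)/2} u(x)$ turns this into an Aubin--Talenti--type bubble
\[
v(x) = \frac{C}{\bigl((1+\beta) - \beta|x|^2\bigr)^{(n-2k)/2}}, \qquad C = \alpha \cdot 2^{(n-2k)/2},
\]
and the conformal identity recorded in the introduction yields $P_k u = \bigl(\tfrac{2}{1-|x|^2}\bigr)^{-(n/2+k)}(-\Delta)^k v$. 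Since $\tfrac{n-2k}{2}\cdot\tfrac{n+2k}{n-2k} = \tfrac{n}{2}+k$, the Euclidean prefactors will cancel exactly once I feed in the polyharmonic bubble identity.

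The key technical step --- and the only nontrivial piece I anticipate --- is the classical identity $(-\Delta)^k v = \kappa\, v^{(n+2k)/(n-2k)}$ on such bubbles. For $k=1$ a routine calculation on $v = C(A+B|x|^2)^{-(n-2)/2}$ yields $-\Delta v = n(n-2)AB\,v^{(n+2)/(n-2)}$, and the general case follows inductively (alternatively, from the fact that Aubin--Talenti functions extremize the higher-order Sobolev inequality on $\mathbb{R}^n$). Combining with the conformal identity gives $P_k u = \kappa\, u^{(n+2k)/(n-2k)}$ on $\mathbb{H}^n$. Since $u$ is radial, strictly decreasing in $r$, and satisfies $u(x) \to 0$ as $r(x) \to \infty$, its range is $(0, \max_x u(x)]$; the pointwise identity $f(u(x)) = \kappa\, u(x)^{(n+2k)/(n-2k)}$ therefore upgrades to $f(t) = \kappa\, t^{(n+2k)/(n-2k)}$ on $(0, \max_x u(x)]$, and the endpoint $t = 0$ is covered by $f(0) = 0$, completing the proof.
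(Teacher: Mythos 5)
Your argument is correct, and it supplies the computation behind the paper's one-line remark that the corollary follows ``immediately'' by plugging the explicit form \eqref{soln} into the equation; the paper itself records no further proof. Conformally pushing forward to $\mathbb{R}^n$ turns $u$ into $v = C\bigl((1+\beta)-\beta|x|^2\bigr)^{-(n-2k)/2}$, the classical bubble identity $(-\Delta)^k v = \kappa\,v^{(n+2k)/(n-2k)}$ applies, and the Jacobian exponents cancel exactly because $\tfrac{n-2k}{2}\cdot\tfrac{n+2k}{n-2k} = \tfrac n2 + k$, yielding $P_k u = \kappa\,u^{(n+2k)/(n-2k)}$ pointwise on $\mathbb{H}^n$; since the nonconstant $u$ in \eqref{soln} is continuous, radial, strictly decreasing and tends to $0$ at infinity, its range is exactly $(0,\max u]$, which upgrades the pointwise identity to $f(t)=\kappa\,t^{(n+2k)/(n-2k)}$ there, with $t=0$ handled by $f(0)=0$. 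Your treatment of the constant branch is a genuinely useful clarification that the paper omits: for even $k$ one has $P_k C_0=\mu_k C_0>0$, and a non-power nonlinearity (say $f(t)=\mu_k C_0^{1-q}t^q$ with $1<q\le p$) satisfies every stated hypothesis while matching $f(C_0)=\mu_kC_0$, so ``nontrivial'' must indeed be read as ``non-constant'' for the corollary to be true, whereas for odd $k$ the sign $\mu_k<0$ excludes constants automatically. One small imprecision: the claim that $(-\Delta)^k v=\kappa v^{(n+2k)/(n-2k)}$ ``follows inductively'' from the $k=1$ case is slightly misleading, since for $1\le j<k$ the iterates $\Delta^j v$ are linear combinations of several powers of $(1+\beta)-\beta|x|^2$ rather than a single power; the collapse to one power occurs only at the $k$-th application (this is already visible at $k=2$). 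The identity is nonetheless classical and is precisely the content of the Lin and Wei--Xu results that the paper cites, so this affects exposition but not correctness.
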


Our method of moving spheres relies on defining the Kelvin transform on the hyperbolic space. The Euclidean Kelvin transform plays an important role when dealing with the asymptotic behavior of $u$ at infinity; see for example \cites{CGS,CLO}. It is also worth mentioning that regarding the nonlocal operator such as the fractional Laplacian,
Chen, Li and Li \cite{CLL} developed a direct method of moving plane together with Kelvin transform
and obtained the following symmetry and nonexistence of positive solutions on $\mathbb{R}^n$.
\begin{thmx}
    Assume that $u\in L_\alpha\cap C^{1,1}_{loc}$ and
    \begin{equation}\label{fl}
        (-\Delta)^{\alpha/2}u(x)=u^p(x), \; x\in\rn,
    \end{equation}
    where the space $L_\alpha$ is defined as

    $$
      L_\alpha = \{ u: \mathbb{R}^n \to \mathbb{R} : \int_{\mathbb{R}^n} \frac{|u(x)|}{1 + |x|^{n + \alpha}} dx < \infty \}.
    $$
    Then,
    \begin{enumerate}
        \item in the subcritical case $1<p<\frac{n+\alpha}{n-\alpha}$, \eqref{fl} has no positive solution.
        \item in the critical case $p=\frac{n+\alpha}{n-\alpha}$, the positive solution of \eqref{fl} must be radially symmetric
              and monotone decreasing about some point in $\rn$.
    \end{enumerate}
\end{thmx}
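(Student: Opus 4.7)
The plan is to adapt the direct method of moving planes for the nonlocal operator $(-\Delta)^{\alpha/2}$, combined with a Kelvin transform, in the spirit of the Chen--Li--Ou integral-equation strategy but applied directly to the pointwise equation. Fix a base point $x_0 \in \mathbb{R}^n$ and introduce the Kelvin transform
\[
\bar{u}(x) \;=\; |x-x_0|^{\alpha-n}\, u\!\left( x_0 + \frac{x-x_0}{|x-x_0|^2} \right).
\]
The conformal covariance of $(-\Delta)^{\alpha/2}$ under inversion yields, for $x \neq x_0$,
\[
(-\Delta)^{\alpha/2}\bar{u}(x) \;=\; |x-x_0|^{-\tau}\, \bar{u}(x)^{p}, \qquad \tau \;:=\; (n+\alpha) - p(n-\alpha) \;\geq\; 0,
\]
with $\tau = 0$ precisely in the critical case. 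The transformed function $\bar u$ is smooth away from $x_0$ and decays like $|x|^{\alpha-n}$ at infinity; this decay is what allows the moving planes to be started from $\lambda = -\infty$ and replaces the usual a priori asymptotic hypotheses.

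For $\lambda \in \mathbb{R}$, let $T_\lambda = \{x_1 = \lambda\}$, $\Sigma_\lambda = \{x_1 < \lambda\}$, denote by $x^\lambda$ the reflection of $x$ across $T_\lambda$, and set $w_\lambda(x) = \bar u(x^\lambda) - \bar u(x)$, which is antisymmetric across $T_\lambda$. The singular integral definition of the fractional Laplacian, split over $\Sigma_\lambda$ and its complement and combined with the antisymmetry of $w_\lambda$, yields the key identity
\[
(-\Delta)^{\alpha/2} w_\lambda(x) \;=\; C_{n,\alpha}\!\int_{\Sigma_\lambda}\!\!\left[ \frac{1}{|x-y|^{n+\alpha}} - \frac{1}{|x-y^\lambda|^{n+\alpha}} \right] w_\lambda(y)\,dy, \qquad x \in \Sigma_\lambda.
\]
Since $|x-y|\leq|x-y^\lambda|$ for $x,y\in\Sigma_\lambda$, the kernel is nonnegative, so wherever $w_\lambda$ attains a strict negative minimum the left-hand side is strictly negative. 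Combining the Kelvin-transformed equations at $x$ and $x^\lambda$ with the mean value theorem further gives $(-\Delta)^{\alpha/2}w_\lambda(x) = \psi_\lambda(x)\,w_\lambda(x) + R_\lambda(x)$ at points where $w_\lambda$ changes sign, with $\psi_\lambda(x)$ comparable to $p\,\bar u(x)^{p-1}|x-x_0|^{-\tau}$ and $R_\lambda\geq 0$ a remainder that vanishes in the critical case.

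With these ingredients the argument runs in two stages. First, a \emph{decay-at-infinity} lemma: at any negative minimum $x^\ast$ of $w_\lambda$ the two displays above force $\psi_\lambda(x^\ast)$ to dominate a certain integral of $|x^\ast-y|^{-n-\alpha}$; since $\bar u(x^\ast)\to 0$ as $|x^\ast|\to\infty$, this is impossible when $\lambda\ll 0$, so $w_\lambda\geq 0$ in $\Sigma_\lambda$ for all sufficiently negative $\lambda$. One then defines the critical value $\lambda_0 := \sup\{\lambda: w_\mu\geq 0\text{ on }\Sigma_\mu\text{ for all }\mu\leq\lambda\}$, and a nonlocal \emph{narrow-region / strong maximum principle for antisymmetric functions}, applied to $w_{\lambda_0}$, shows that either $\lambda_0 = +\infty$ or $w_{\lambda_0}\equiv 0$ in $\Sigma_{\lambda_0}$.

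In the subcritical case ($\tau > 0$) the weight $|x-x_0|^{-\tau}$ breaks the reflection invariance of the transformed equation, so $w_{\lambda_0}\equiv 0$ is impossible for any finite $\lambda_0$; hence $\lambda_0 = +\infty$. Running the argument in every coordinate direction and for arbitrary base points $x_0$ then forces $u$ to be constant, but the only nonnegative constant solution of $(-\Delta)^{\alpha/2}u = u^p$ is $u\equiv 0$, contradicting positivity and proving nonexistence. In the critical case ($\tau = 0$) the transformed equation is reflection invariant, so $\lambda_0$ is finite and $\bar u$ is symmetric about $T_{\lambda_0}$; choosing $x_0$ appropriately and rotating the direction of $T_\lambda$, the same procedure in every direction yields that $u$ is radially symmetric and monotone decreasing about some point $P\in\mathbb{R}^n$. \emph{The main obstacle} is the nonlocal nature of $(-\Delta)^{\alpha/2}$: without a pointwise maximum principle, the central technical work is to justify the antisymmetric integral representation rigorously, to establish the decay-at-infinity and narrow-region principles in the nonlocal setting (both relying only on the sign of the kernel $|x-y|^{-n-\alpha} - |x-y^\lambda|^{-n-\alpha}$), and to handle the singular point $x_0$ of $\bar u$ so that it does not interfere with the sliding, for example by locating any would-be negative minimum of $w_\lambda$ away from $x_0^\lambda$ via the $L_\alpha\cap C^{1,1}_{loc}$ regularity.
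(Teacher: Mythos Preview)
The statement you were asked to prove is not actually proved in this paper. It appears in the introduction as a cited result, attributed to Chen, Li and Li \cite{CLL}, and serves only as motivation for the authors' own work on hyperbolic space. There is therefore no ``paper's own proof'' to compare against.

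That said, your outline is a faithful sketch of the argument in \cite{CLL}: Kelvin transform to produce decay, the antisymmetric representation of $(-\Delta)^{\alpha/2}w_\lambda$, the decay-at-infinity and narrow-region principles for antisymmetric functions, and the dichotomy between $\tau>0$ (subcritical, no finite stopping position) and $\tau=0$ (critical, symmetry at the stopping plane). So as a reconstruction of the cited proof, your proposal is on target.

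It may be worth noting that the present paper, when it proves its own analogous results on $\mathbb{H}^n$ (Theorems \ref{thm1} and \ref{thm2}), does \emph{not} use the direct pointwise route you describe. Instead it passes to the equivalent integral equation via the Green's function of $P_k$, and the contradiction on the set $\Sigma_\lambda^-$ is obtained from the Hardy--Littlewood--Sobolev inequality and an $L^q$ estimate rather than from a pointwise evaluation at a negative minimum. Both approaches hinge on the positivity of the difference kernel $G(x,y)-G(x^\lambda,y)$ (respectively $|x-y|^{-n-\alpha}-|x-y^\lambda|^{-n-\alpha}$), but the paper's integral-inequality method avoids the need for the $C^{1,1}_{loc}$ pointwise evaluation of the singular integral that your direct approach requires, at the cost of needing the HLS machinery and integrability assumptions on $f'(u)$.
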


Now with the Kelvin transform being defined on the hyperbolic space in the present paper, we are able to use the integral version of the moving plane method
to study the $L^{\frac{2n}{n-2k}}_{loc}$ solutions to the equation \eqref{p_k} directly. Our second main result reads as follows:
\begin{theorem}\label{thm2}
    Let $k\geq 2$, $u\in L^{\frac{2n}{n-2k}}_{loc}(\hn)$ be a nonnegative solution of \eqref{p_k}.
    Assume that $f(t)$ is a Lipschitz function satisfying
    \begin{enumerate}[label=(\roman*)]
      \item $f(t)$ is non-decreasing, $ f(0)=0$,
      \item  $f^\prime(u)\in L^{\frac{n}{2k}}_\text{loc}(\hn)$, $f^\prime(t)$ is non-decreasing.
    \end{enumerate}
    Then,
    \begin{enumerate}
        \item in the critical case, $\lim\limits_{t\to\infty}f(t)/t^p=c_1<\infty$ with $p=\frac{n+2k}{n-2k}$, there exists a point $ P \in \mathbb{H}^n$
        such that positive solution $u$ is radially symmetric about some point at $P$. Moreover, $u$ is non-increasing in the radial direction.
        \item in the subcritical case, $\lim\limits_{t\to\infty}f(t)/t^p=c_2<\infty$, where $1<p<\frac{n+2k}{n-2k}$, $u\equiv 0$.
    \end{enumerate}
\end{theorem}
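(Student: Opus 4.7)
The plan is to reformulate \eqref{p_k} as an integral equation using the Green's function $G_k$ of the GJMS operator on $\hn$, namely
$$u(x) = \int_{\hn} G_k(x,y)\, f(u(y))\, dV_g(y),$$
and then to apply the integral version of the moving plane method of Chen-Li-Ou, combined with the hyperbolic Kelvin transform $\mathcal{K}$ introduced earlier in this paper. The conformal covariance of $P_k$ ensures that $\bar u := \mathcal{K}u$ satisfies a transformed integral equation, and the exponent $p = (n+2k)/(n-2k)$ is exactly the one left invariant by $\mathcal{K}$ in the nonlinear term.

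For part (1), the critical case, I would fix a center for the Kelvin transform and foliate $\hn$ by totally geodesic hypersurfaces orthogonal to a fixed geodesic, following the foliation structure used by Almeida-Damascelli-Ge. Letting $x_\lambda$ denote the hyperbolic reflection of $x$ across the moving hypersurface $T_\lambda$ and $\Sigma_\lambda$ the half-space on one side, the integral representation gives
$$\bar u(x) - \bar u(x_\lambda) = \int_{\hn} \bigl[G_k(x_\lambda,y) - G_k(x,y)\bigr]\, f(\bar u(y))\, dV_g(y).$$
Splitting the integral between $\Sigma_\lambda$ and its reflection and using the positivity of $G_k(x_\lambda,y) - G_k(x,y)$ for $y$ on the correct side, together with the bound $f(\bar u_\lambda) - f(\bar u) \leq f'(\bar u_\lambda)(\bar u_\lambda - \bar u)$ (valid since $f'$ is non-decreasing), an application of H\"older's inequality using $f'(u) \in L^{n/(2k)}_{\mathrm{loc}}$ combined with the Hardy-Littlewood-Sobolev inequality on $\hn$ yields a smallness estimate of the shape
$$\bigl\|(\bar u - \bar u_\lambda)^+\bigr\|_{L^{2n/(n-2k)}(\Sigma_\lambda)} \leq C\,\bigl\|f'(u)\bigr\|_{L^{n/(2k)}(\Sigma_\lambda)}\,\bigl\|(\bar u - \bar u_\lambda)^+\bigr\|_{L^{2n/(n-2k)}(\Sigma_\lambda)}.$$
Starting at a value of $\lambda$ where the prefactor is less than one and sliding in the standard manner forces symmetry of $\bar u$ across a critical hypersurface; varying the Kelvin center and the foliation direction then yields radial symmetry of $u$ about some point $P \in \hn$, with monotonicity inherited from that of $G_k$ in the geodesic variable.

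For part (2), the subcritical case, the same scheme runs, but the Kelvin transform under $p < (n+2k)/(n-2k)$ introduces a strictly positive weight in the transformed nonlinearity, reflecting the conformal-weight mismatch between $p$ and the critical exponent. The moving plane comparison then produces a strict inequality at every limiting $\lambda$ that can only be reconciled with the required symmetry when $\bar u$ vanishes identically; hence $u \equiv 0$.

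The main obstacle will be establishing the precise transformation rule for $G_k$ under the hyperbolic Kelvin transform and verifying the strict monotonicity $G_k(x_\lambda,y) > G_k(x,y)$ for $y$ on the same side as $x_\lambda$ of the reflecting hypersurface; these two properties drive the moving plane comparison and are more delicate on $\hn$ than on $\rn$ because $G_k$ is not a pure power of the geodesic distance. A secondary delicate point is starting the procedure under the purely local integrability assumptions on $u$ and $f'(u)$: one must exploit $\mathcal{K}$ to convert local integrability near infinity into local integrability near the pole and then invoke a small-mass argument with the HLS inequality on $\hn$.
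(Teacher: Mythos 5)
Your top-level strategy --- convert \eqref{p_k} to an integral equation via the Green's function, apply the hyperbolic Kelvin transform, and run the moving plane across the foliation hypersurfaces with HLS plus H\"older for the small-mass step --- is the same as the paper's. But two concrete steps in your sketch do not hold up.

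The displayed identity
\[
\bar u(x) - \bar u(x_\lambda) = \int_{\hn}\bigl[G_k(x_\lambda,y)-G_k(x,y)\bigr]\,f(\bar u(y))\,dV_g(y)
\]
is false for a general $f$ satisfying the hypotheses. Under the Kelvin transform $\bar u$ solves
$P_k\bar u = |J_{\phi}|^{\frac{n+2k}{2n}}\,f\bigl(|J_{\phi}|^{-\frac{n-2k}{2n}}\bar u\bigr)$, and the Jacobian factors cancel to produce $f(\bar u)$ only when $f(t)=ct^{\frac{n+2k}{n-2k}}$ exactly. The paper therefore keeps the weighted representation for $v^\lambda=\bar u^\lambda-\bar u$, and the crucial step is to use the monotonicity $1>|J_{y^\lambda}|>|J_y|\to 0$ on $\Sigma_\lambda$ together with the growth hypothesis $\lim_{t\to\infty}f(t)/t^p<\infty$ to pass, as an \emph{inequality}, to a comparison involving $f(\bar u^\lambda)-f(\bar u)$. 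Without that intermediate argument the HLS/H\"older estimate has no clean expression to act on, and this is precisely the step where the subcritical weight $|J|^\tau$ (with $\tau>0$) enters the picture and separates the two cases.

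In the critical case your sketch also omits a dichotomy that drives the conclusion. If the plane stops at a limiting $\Lambda$ strictly away from the Kelvin center, then $\bar u$ is symmetric across a single hypersurface only, and rotating the foliation does \emph{not} upgrade this to radial symmetry; instead the paper concludes that the Kelvin center is a removable singularity of $\bar u$, hence $u$ has exponential decay, and then switches to the standard non-Kelvin moving plane of \cite{LLW} to get symmetry about some $P$. Radial symmetry of $\bar u$ by rotating the foliation is available only in the complementary case $\Lambda=0$. Moreover, ``varying the Kelvin center'' as you propose in the critical case would yield symmetry of $u$ about every point and hence constancy --- that is the subcritical conclusion, not the critical one, so the dichotomy is exactly what separates outcomes (1) and (2). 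On a minor note, the Green's function monotonicity you flag as delicate is already available: $P_k^{-1}(\rho)$ is explicitly a positive decreasing function of the geodesic distance (via Lu--Yang), and since $I_\lambda$ is an isometry, $\rho(x,y)<\rho(x^\lambda,y)$ for $x,y\in\Sigma_\lambda$ gives the needed sign directly.
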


In the remaining part of this section, we shall introduce several applications of Theorem \ref{thm1} regarding the higher order prescribing $Q$-curvature problem on the hyperbolic space.
Let $(M^n,g)$ be an oriented Riemannian $n$-manifold, and $n\geq 3$. One of the central objectives in conformal geometry is whether we can change the metric $g$ conformally into a new metric $h$ with the prescribed curvature function. For example, let $L_g = -\frac{4(n-1)}{(n-2)}\Delta_g+R_g$ be the conformal Laplacian, where $R_g$ is the scalar curvature with respect to $g$.
Setting the conformal factor $\rho=u^{\frac{4}{n-2}}, u>0$ and $h=\rho g$, then we have the following conformal covariance property:
\[L_g(u\varphi) = u^{\frac{n+2}{n-2}}L_h(\varphi), \quad \forall \varphi\in C^\infty(M).\]
If one prescribes the scalar curvature $R_h$ for the metric $h$, then $u$ has to satisfy the second order equation
\begin{equation}\label{Yamabe_eq}
  L_g(u) = u^{\frac{n+2}{n-2}}L_h(1) = R_hu^{\frac{n+2}{n-2}}.
\end{equation}
When $R_h$ is constant, it is the well-known Yamabe problem, and in the case where $R_h$ is a prescribed function, it is called the Nirenberg problem.

If we consider the higher order conformal covariant operators, it relates to the prescribing $Q$-curvature problem.
The $Q$-curvature is a canonically defined $n$-form on $M$. It is not conformally invariant but enjoys certain natural properties with respect to conformal transformations.

In \cites{GOR08}, Grunau, Ahmedou and Reichel considered the fourth order Paneitz equation in hyperbolic space $\hn$ when $n>4$.
They established the existence of a continuum of radially symmetric solutions using ordinary differential equaiton techniques.
The conformal covariance property for Paneitz operators can be stated as follows:
\[P_2^g(u\varphi) = u^{\frac{n+4}{n-4}}P_2^h(\varphi), \quad \forall \varphi\in C^\infty(M).\]
Prescribing the $Q$-curvature for the metric $h$ by the function $Q_h$ leads to the equation
\begin{equation}\label{Qeq_2}
   P_2^g(u) = u ^{\frac{n+4}{n-4}}P_2^h(1) = \frac{n-4}{2}Q_hu^{\frac{n+4}{n-4}}.
\end{equation}
For the hyperbolic space with the Poincar\'e metric $g_{ij}=\frac{4}{(1-|x|^2)^2}\delta_{ij}$, one seeks the metric $h$ of the form $h_{ij}=U^{\frac{4}{n-4}}g_{ij}=u^{\frac{4}{n-4}}\delta_{ij}$,
the equation \eqref{Qeq_2} reduces to 
\begin{equation}\label{ODE_2}
  \begin{cases}
    \begin{aligned}
      \Delta^2u = \frac{n-4}{2}Qu^{\frac{n+4}{n-4}} &\text{ in } B_1(0)\subset\rn,\\
      u > 0 \quad\;\; &\text{ in } B_1, \\
      u = \infty \quad &\text{ on } \partial B_1.
    \end{aligned}
  \end{cases}
\end{equation}
Here the factor $\frac{n-4}{2}$ comes from the expression of the Paneitz operator:
\[P_2^g = \Delta^2_g + \dvg_g(a_nR_g\id - b_n\ric_g)\nabla_g + \frac{n-4}{2}Q_g,\]
where $a_n, b_n$ are some constants depending on $n$.

The authors of \cites{GOR08} proved the following:
\begin{thmx}
  For every $\alpha>0$, there exists a radial solution of the equaiton \eqref{ODE_2} in the unit ball with $Q\equiv \frac{1}{8}n(n^2-4)$,
  infinite boundary values on $\partial B$ and with $u(0)=\alpha$. Moreover,
  \begin{enumerate}
    \item the conformal metric $u^{4/(n-4)}\delta_{ij}$ on $B$ is complete;
    \item if $u(0)>0$ is sufficiently small then the corresponding solution generates a metric with negative scalar curvature.
  \end{enumerate}
\end{thmx}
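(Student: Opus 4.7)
The statement asserts the existence of a one-parameter family of radial solutions of the fourth-order equation \eqref{ODE_2} on the unit ball with prescribed hyperbolic constant $Q$-curvature and infinite boundary data, indexed by $u(0)=\alpha>0$. Since only radial solutions are sought, \eqref{ODE_2} reduces to
\[L^{2}u \;=\; \frac{n-4}{2}\, Q\, u^{(n+4)/(n-4)}, \qquad L := \partial_{r}^{2} + \frac{n-1}{r}\partial_{r}, \qquad r\in[0,1).\]
Smoothness at the origin forces $u'(0)=(Lu)'(0)=0$, so only the two shooting parameters $\alpha=u(0)$ and $\beta=(Lu)(0)$ remain free. One explicit trajectory is already in hand: the conformal factor of the hyperbolic metric itself, $u_{*}(r)=2^{(n-4)/2}(1-r^{2})^{-(n-4)/2}$, corresponding to a particular pair $(\alpha_{*},\beta_{*})$. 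My plan is to attack the theorem by an ODE shooting argument in $\beta$, using $u_{*}$ as a benchmark.

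\emph{Existence.} Fix $\alpha>0$ and let $R(\beta)\in(0,1]$ denote the maximal radius of positive smooth existence for the IVP. Standard ODE theory yields continuous dependence of $R$ on $\beta$ wherever the trajectory stays positive and regular. I would then establish two one-sided bounds: for $\beta$ very negative, $Lu$ drives $u$ down to zero before $r=1$, so $R(\beta)<1$; for $\beta$ very positive, the critical nonlinearity forces $u$ itself to blow up before $r=1$, so again $R(\beta)<1$. An intermediate-value argument on the connected component containing $\beta_{*}$ then produces $\beta^{*}=\beta^{*}(\alpha)$ with $R(\beta^{*})=1$ and $u(r)\to+\infty$ as $r\to 1^{-}$.

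\emph{Blow-up profile and completeness.} To pin down the boundary behaviour I would substitute $u=u_{*}v$ and derive the perturbed equation for $v$ on $[0,1)$. A bootstrap using sub/supersolution barriers built from $u_{*}$ shows $v(r)\to v_{\infty}\in(0,\infty)$, hence $u(r)\sim C(1-r^{2})^{-(n-4)/2}$ near the boundary. This rate gives $\int_{0}^{1}u^{2/(n-4)}\,dr=\infty$, which is precisely completeness of $h=u^{4/(n-4)}\delta_{ij}$ at $\partial B$. For part~(2), the scalar curvature of $h$ is an explicit second-order expression in $u^{2/(n-4)}$ whose leading term is $\Delta(u^{2/(n-4)})$ with the correct sign; when $\alpha$ is small, the shooting forces $u$ and $(Lu)(0)$ to stay small on a large central ball, the Laplacian term dominates the gradient term, and $R_{h}<0$ near $r=0$. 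This sign is then propagated to all of $B_{1}$ by an ODE continuation using the monotone quantities built in the previous step.

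The principal obstacle is the shooting step itself. The Paneitz operator is genuinely fourth-order and admits no useful maximum principle, so monotonicity of $\beta\mapsto R(\beta)$ and sign control on $u$ and $Lu$ along trajectories must be extracted by hand, typically through a Pohozaev-type monotone functional tailored to the critical exponent $\frac{n+4}{n-4}$. Tracking the full four-dimensional phase $(u,u',Lu,(Lu)')$ and ruling out premature degeneracy at the shooting value $\beta^{*}(\alpha)$ uniformly as $\alpha$ ranges over $(0,\infty)$ is where the real work lies.
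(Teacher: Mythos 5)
This theorem is not proved in the paper you are working from. It is quoted from Grunau--Ahmedou--Reichel \cite{GOR08} as background motivation, and the paper only remarks, in one sentence, that their argument is a shooting method for the reduced radial ODE before moving on to its own results. So there is no in-paper proof to compare against line by line. That said, your outline is of the right shape and matches the attributed approach: you reduce to the radial IVP with shooting parameter $\beta=(Lu)(0)$, and you correctly identify the main obstruction, namely that $\Delta^2$ has no useful maximum principle and one must control the four-dimensional phase $(u,u',Lu,(Lu)')$ by hand.

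There is, however, a concrete gap in the shooting step as you have written it. You define $R(\beta)\in(0,1]$ as the maximal radius of positive smooth existence, assert $R(\beta)<1$ for $\beta$ very negative (trajectory hits zero) and $R(\beta)<1$ for $\beta$ very positive (blow-up before $r=1$), and then invoke an intermediate-value argument ``on the connected component containing $\beta_*$.'' Two problems: first, $\beta_*$ is the initial datum of the benchmark hyperbolic bubble, which corresponds to the specific $\alpha_*=u_*(0)$ rather than the arbitrary $\alpha$ you fixed, so that connected component is not available to you; second, two ``less than one'' endpoints never produce a crossing of $R=1$ by continuity alone, and even if $R(\beta^*)=1$ were achieved, your capped definition of $R$ cannot distinguish blow-up exactly at $r=1$ from a solution that extends past $r=1$. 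What actually underlies \cite{GOR08} (and the paper's own Theorem \ref{thm3}, which you could use as a template) is a dichotomy: there is a threshold $\beta_\alpha$ at which the trajectory is the entire Talenti bubble ($R=\infty$, no blow-up), the trajectory hits zero for $\beta<\beta_\alpha$, and for $\beta>\beta_\alpha$ the finite blow-up radius $R(\beta)$ decreases continuously from $+\infty$ to $0$; the desired $\beta^*(\alpha)$ is then located on that monotone branch by the intermediate-value theorem. Without identifying the threshold and establishing monotonicity of the blow-up radius, the shooting step does not produce the $\beta^*$ the theorem requires.
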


Their proof is based on a so-called \textit{shooting method} for ODEs. Under the assumption $Q\equiv \frac{1}{8} n(n^2 - 4)$,
the equation \eqref{ODE_2} is equivalent to (up to a scaling)
\begin{equation*}
  \Delta^2 u = u^{\frac{n+4}{n-4}}
\end{equation*}
with the same boundary condition. They focus on the radial solutions by studying the initial value problem 
\begin{equation}
  \begin{cases}
    \Delta^2 u(r) = \left(r^{1-n}\frac{\partial}{\partial r}\left(r^{n-1}\frac{\partial}{\partial r}\right)\right)^2 u(r)= u(r)^{\frac{n+4}{n-4}}, \quad r>0,\\
    u(0) = \alpha,\; u^\prime(0) = 0,\; \Delta u(0) = \beta,\; (\Delta u)^\prime(0) = 0,
  \end{cases}
\end{equation}
where $\alpha\geq 0, \beta\in\mathbb{R}$ are given.
It is known that the above equation has the following explicit positive solution \cites{Lin98,WX99}.
\begin{thmx}
  Suppose that $u$ is a smooth solution of 
  \begin{equation*}
    \begin{cases}
      \begin{aligned}
        &\Delta^ku = u^{\frac{n+2k}{n-2k}}, n>2k,\\
        &u > 0  \text{ in } \rn. 
      \end{aligned}
    \end{cases}
  \end{equation*}
  Then $u$ is radially symmetric about some point $x_0\in\rn$ and $u$ has the following form
  \begin{equation}
    U(x)=\left(\frac{2a}{a^2+r^2}\right)^{\frac{n-2k}{2}}, \quad r=|x-x_0|, \; a>0.
  \end{equation}
\end{thmx}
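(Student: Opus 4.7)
The plan is to convert the polyharmonic PDE into an equivalent integral equation and then apply the method of moving spheres, as developed by Chen--Li--Ou in the Euclidean setting. The first step is to show that every positive smooth solution admits the Riesz representation
\begin{equation*}
u(x) = c_{n,k}\int_{\rn} \frac{u(y)^{(n+2k)/(n-2k)}}{|x-y|^{n-2k}}\,dy
\end{equation*}
for a dimensional constant $c_{n,k}>0$. This representation is obtained by inverting the operator iteratively via the fundamental solution of $-\Delta$, which requires the auxiliary chain of positivity $v_j := (-\Delta)^j u > 0$ on $\rn$ for $j = 1,\ldots,k-1$. Each $v_j$ is treated by spherical averaging: writing $\overline v_j(r)$ for the mean of $v_j$ over $\partial B_r(0)$ and exploiting the control on $\overline u(r)$ coming from the standing integrability hypotheses produces the required sign, after which the Riesz inversion is applied one Laplacian at a time and the integral equation falls out after a decay step at infinity.

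With the integral equation in hand, I would run the moving sphere method directly on it. For each $x_0 \in \rn$ and $\lambda > 0$, set the Kelvin transform
\begin{equation*}
u_{x_0,\lambda}(x) = \Big(\frac{\lambda}{|x - x_0|}\Big)^{n - 2k} u\Big(x_0 + \frac{\lambda^2 (x - x_0)}{|x - x_0|^2}\Big).
\end{equation*}
The conformal covariance of the Riesz kernel at the critical exponent implies that $u_{x_0,\lambda}$ solves the same integral equation, so the difference $w_\lambda := u - u_{x_0,\lambda}$ admits an integral representation whose sign on the exterior $\rn \setminus B_\lambda(x_0)$ is controlled by a Hardy--Littlewood--Sobolev estimate restricted to the negativity set $\{w_\lambda < 0\}$. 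A bootstrap shows $w_\lambda \geq 0$ on the exterior for all sufficiently small $\lambda$, initiating the moving sphere.

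Enlarging $\lambda$ leads to a dichotomy: either there is a finite critical radius $\bar\lambda(x_0)$ at which $w_{\bar\lambda(x_0)} \equiv 0$ on the exterior, or the sphere can be pushed all the way to infinity. The second alternative, applied at any single $x_0$, contradicts the finiteness of $\int u^{2n/(n-2k)}$ implicit in the integral equation, so the first alternative holds at every $x_0 \in \rn$. A calculus lemma of Li--Zhang then forces $u(x) = \alpha(1 + \beta|x - \bar x_0|^2)^{-(n-2k)/2}$ for some center $\bar x_0$ and positive constants $\alpha,\beta$; substituting back into $\Delta^k u = u^{p}$ with $p = (n+2k)/(n-2k)$ relates $\alpha$ and $\beta$ and recovers the Aubin--Talenti form $U(x) = (2a/(a^2 + r^2))^{(n - 2k)/2}$.

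The main obstacle I anticipate is the positivity chain $v_j > 0$ for $1 \leq j \leq k - 1$. For $k = 2$ this was Lin's key contribution, obtained by coupling the spherical averaging with a Pohozaev identity to rule out sign changes of $-\Delta u$. For general $k$ one iterates the averaging procedure and bootstraps decay at infinity at each level, and the argument is genuinely delicate; without it the Riesz representation cannot be set up and the entire moving sphere machinery never gets off the ground.
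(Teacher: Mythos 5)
The paper does not prove this theorem; it quotes it as a classical classification result due to Lin \cite{Lin98} for $k=2$ and Wei--Xu \cite{WX99} for general $k$, so there is no in-paper proof to compare against. Your sketch is essentially the accepted route. You correctly identify the positivity chain $(-\Delta)^j u > 0$, $1 \le j \le k-1$, as the genuinely hard step: it is precisely what Lin and Wei--Xu establish via spherical averaging, Pohozaev-type identities and bootstrap decay estimates, and without it the Riesz representation --- and hence the entire reflection machinery --- is unavailable, as you rightly emphasize.

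Two smaller points. First, Lin, Wei--Xu and Chen--Li--Ou \cite{CLO} run the moving \emph{plane} method, whereas you propose the moving \emph{sphere} method of Li--Zhu \cite{Li-Z} and Li \cite{Li}. Both work; the moving sphere variant lets the Li--Zhang calculus lemma deliver the explicit bubble more directly, and it is in fact what the present paper adapts to $\hn$ via the hyperbolic Kelvin transform of Section~3 and the dichotomy in Lemmas~4.3--4.6. Second, your closing of the dichotomy is slightly imprecise: the Riesz representation does not by itself force $u \in L^{2n/(n-2k)}(\rn)$, so the contradiction you invoke when the critical radius escapes to infinity at a single $x_0$ needs more justification. (Your own estimate gives $\liminf_{|y|\to\infty}|y|^{(n-2k)/2}u(y)>0$, which would contradict global $L^{2n/(n-2k)}$ integrability if one had it, but that integrability is not automatic from the integral equation.) The standard and cleaner closure --- and the one the paper uses in the hyperbolic setting, Lemmas~4.4--4.5 --- is: if the critical radius is infinite at \emph{every} center, a Li--Zhu calculus lemma forces $u$ to be constant, and a positive constant cannot satisfy the integral equation since the right-hand side diverges; hence some center has a finite critical sphere, this propagates to all centers, and the Li--Zhang lemma produces the bubble. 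Modulo the positivity chain, which you flag but do not carry out, your proposal is sound.
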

It is also known that these solutions are the only positive entire solutions \cites{Lin98,WX99}. On the other hand, there exists non-global solution which has the explicit form 
\begin{equation}
  V(x)=\left(\frac{2a}{a^2-r^2}\right)^{\frac{n-4}{2}}, \quad r=|x-x_0|, \; a>0.
\end{equation}
We call such solutions \textit{large radial solutions}, by which we mean they are radially symmetric on open balls that diverge to infinity on the boundary.
In terms of solutions $U$ or $V$, the corresponding metrics are
\begin{equation}
  h=\left(\frac{2a}{a^2\pm r^2}\right)^{2}\delta_{ij}.
\end{equation}
In case of $``+"$ one finds that $(\rn,h)$ is isometrically isomorphic to a sphere $\sn_a$ of radius $a$ 
equipped with standard Euclidian metric scaled by $1/a^2$. 
In case of $``-"$, the solution $U$ blows up on $\partial B_a(x_0)$ and one finds that $(B_a(x_0),h)$ 
is isometrically isomorphic to the standard hyperbolic space.

Motivated by the results in \cites{GOR08}, it is natural to consider the problem related to the GJMS operators of order greater than 4 in the hyperbolic space $\hn$ for $n>2k$.
With the chosen conformal factor $\rho=u^{\frac{2k}{n-2k}}$ and $h=\rho g$, we have the conformal covariance property as follows:
\[P_k^g(u\varphi) = u^{\frac{n+2k}{n-2k}}P_k^h(\varphi), \quad \forall \varphi\in C^\infty(M).\]
The prescribing $Q$-curvature problems lead to considering the equation
\begin{equation}\label{Qeq_k}
  P_k u = \frac{n-2k}{2}Qu^{\frac{n+2k}{n-2k}}.
\end{equation}
In fact, due to the establishment of the explicit form of solution \eqref{soln}, we are able to answer the prescribing constant $Q$-curvature problem on $\hn$ affirmatively.
\begin{theorem}\label{PrescribedH}
  There exists a positive solution to the prescribed constant $Q$-curvature problem \eqref{Qeq_k} on the hyperbolic space.
\end{theorem}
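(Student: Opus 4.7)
The plan is to construct a positive solution of \eqref{Qeq_k} explicitly by substituting the ansatz \eqref{soln} supplied by Theorem~\ref{thm1} into the prescribed $Q$-curvature equation and verifying compatibility. I would work in the Poincar\'e ball model and, without loss of generality, place the base point $P$ at the origin. The elementary identity $\cosh^{2}(r(x)/2)=1/(1-|x|^{2})$ then converts the candidate into the Euclidean-coordinate expression
\[
  u(x)=\frac{\alpha\,(1-|x|^{2})^{(n-2k)/2}}{\bigl(1+\beta-\beta|x|^{2}\bigr)^{(n-2k)/2}},
\]
which is positive and smooth on $\mathbb{H}^{n}$ whenever $\alpha>0$ and $\beta\in(-1,\infty)\setminus\{0\}$. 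The key tool is the conformal covariance identity recalled in the introduction,
\[
  P_{k}u=\Bigl(\tfrac{2}{1-|x|^{2}}\Bigr)^{-(n/2+k)}(-\Delta)^{k}\!\Bigl(\bigl(\tfrac{2}{1-|x|^{2}}\bigr)^{n/2-k}u\Bigr),
\]
which reduces the computation of $P_{k}u$ to a purely Euclidean polyharmonic calculation applied to the conformal lift $\tilde u:=(2/(1-|x|^{2}))^{n/2-k}u$.

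The lift $\tilde u$ simplifies, up to a positive multiplicative constant, to an Aubin--Talenti-type bubble: when $-1<\beta<0$ it takes the form $\mathrm{const}\cdot(a^{2}+|x|^{2})^{-(n-2k)/2}$ with $a^{2}=-(1+\beta)/\beta$, and when $\beta>0$ it takes the form $\mathrm{const}\cdot(a^{2}-|x|^{2})^{-(n-2k)/2}$ with $a^{2}=(1+\beta)/\beta>1$. In either case the Lin and Wei--Xu classifications recalled in the excerpt (for the global and the ``large'' radial branches of solutions of $(-\Delta)^{k}U=U^{(n+2k)/(n-2k)}$, respectively) provide a positive constant $c(\alpha,\beta)$ such that $(-\Delta)^{k}\tilde u=c(\alpha,\beta)\,\tilde u^{(n+2k)/(n-2k)}$. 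Substituting back into the conformal covariance formula, the conformal weights are designed precisely so that
\[
  -\bigl(\tfrac{n}{2}+k\bigr)+\bigl(\tfrac{n}{2}-k\bigr)\cdot\tfrac{n+2k}{n-2k}=0,
\]
which means the factors $2/(1-|x|^{2})$ cancel identically and one arrives at the desired relation $P_{k}u=c(\alpha,\beta)\,u^{(n+2k)/(n-2k)}$ on all of $\mathbb{H}^{n}$.

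Finally, since the scaling $u\mapsto\lambda u$ multiplies this constant by $\lambda^{-4k/(n-2k)}$, one may rescale $\alpha$ so that $c(\alpha,\beta)=\tfrac{n-2k}{2}Q$ for any prescribed $Q>0$, yielding the required positive solution. I expect the main technical obstacle to be the careful bookkeeping of the multiplicative constants that appear under the conformal transformation, specifically verifying that $c(\alpha,\beta)$ is genuinely positive. This positivity is inherited from the classical positivity of the Aubin--Talenti multiplier on both the global $(a^{2}+|x|^{2})$ and large-radial $(a^{2}-|x|^{2})$ branches of positive solutions of the Euclidean equation $(-\Delta)^{k}U=U^{(n+2k)/(n-2k)}$.
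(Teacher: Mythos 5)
Your proof fills in a verification that the paper itself leaves implicit (there is no separate proof of Theorem~\ref{PrescribedH} in Section~\ref{sec:proofs}; the authors merely observe that \eqref{soln} or $u\equiv 1$ supplies the solution), and the route you take --- lift via the conformal factor $\bigl(\tfrac{2}{1-|x|^2}\bigr)^{\frac{n}{2}-k}$, reduce to a Euclidean polyharmonic computation, and exploit the cancellation $-\bigl(\tfrac{n}{2}+k\bigr)+\bigl(\tfrac{n}{2}-k\bigr)\cdot\tfrac{n+2k}{n-2k}=0$ --- is the expected one. The algebra identifying $\tilde u$ with the Aubin--Talenti bubble $(a^2+|x|^2)^{-(n-2k)/2}$ when $-1<\beta<0$ and with the large-radial profile $(a^2-|x|^2)^{-(n-2k)/2}$ when $\beta>0$ is correct, as is the rescaling observation $u\mapsto\lambda u$ multiplying $c$ by $\lambda^{-4k/(n-2k)}$.

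However, the claim that the multiplier $c(\alpha,\beta)$ is \emph{positive on both branches} is wrong when $k$ is odd. For $V=(2a/(a^2-|x|^2))^{(n-2k)/2}$ the pullback argument gives $(-\Delta)^k V=(P_k 1)\,V^{\frac{n+2k}{n-2k}}$, and $P_k 1=\prod_{j=1}^{k}\bigl(-\tfrac{n(n-2)}{4}+j(j-1)\bigr)=(-1)^k\,\Gamma(\tfrac{n}{2}+k)/\Gamma(\tfrac{n}{2}-k)$, which is negative for odd $k$ (already for $k=1$ one gets $-\Delta V=-\tfrac{n(n-2)}{4}V^{\frac{n+2}{n-2}}$). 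Equivalently, the large-radial profile is \emph{not} a positive solution of $(-\Delta)^k U=U^{\frac{n+2k}{n-2k}}$ for odd $k$, so the Lin and Wei--Xu classifications you cite do not cover that branch, and in fact the paper only records the large-radial solution for the Paneitz case $k=2$. The fix is simple and does not damage the theorem: restrict to the bubble branch $\beta\in(-1,0)$, on which $c>0$ for every $k$ since $(-\Delta)^k$ of the Aubin--Talenti function is a genuinely positive multiple of its critical power; alternatively, take the trivial solution $u\equiv 1$ (with $Q=\tfrac{2}{n-2k}P_k1$, the intrinsic $Q$-curvature of $\hn$), which the paper also points to. You should also be careful that ``prescribed constant $Q$'' via rescaling of $\alpha$ only ranges over one sign of $Q$ per branch, since $\lambda^{-4k/(n-2k)}$ is always positive.
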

We point out that our study of the equation on the hyperbolic space and the classification result provide fruitful information to study the corresponding Euclidean problem. In fact, there are plenty of efforts devoted to the large radial solutions of the following polyharmonic equation with  nonlinearity of polynomial growth, for example \cites{DLS09, DLS14}.
\begin{equation}\label{radial_eq_eucl}
  \begin{cases}
    \begin{aligned}
      &(-\Delta)^k u(r) = \left(r^{1-n}\frac{\partial}{\partial r}\left(r^{n-1}\frac{\partial}{\partial r}\right)\right)^k u(r)= u(r)^{\frac{n+2k}{n-2k}}, \\
      & u(0)=\alpha, u^{\prime}(0)=0,\\
      &(-\Delta)^m u(0) = \beta_m,\; (-\Delta)^m u^\prime(0) = 0, \quad m=1,\cdots,k-1. 
    \end{aligned}
  \end{cases}
\end{equation}
We will see that the solution $U$ is a separatrix in the $r-u$ plane, i.e., if we fix $\alpha > 0$ and consider $\beta$ as a varying parameter then $U_\alpha$ separates the large solutions from the solutions with entire solutions. Denote $U(0)=\alpha_0$ and $\Delta U(0)=\beta$, we have
\begin{theorem}\label{thm3}
  For any $\alpha>0$, if $u$ is a positive radial solution of \eqref{radial_eq_eucl}, then $\beta_1\leq\beta$. Moreover,
  \begin{enumerate}
    \item if $\beta_1<\beta$, then $u$ blows up on $\partial B_1(0)$, i.e., $u(r)\to\infty$ as $r\to 1$.
    \item if $\beta_1=\beta$, then $u$ takes the form \[u(r)=\left(\frac{2a}{a^2+r^2}\right)^{\frac{n}{2}-k}.\]
  \end{enumerate} 
\end{theorem}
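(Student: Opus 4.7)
The plan is to transfer the Euclidean radial IVP to the hyperbolic space via the Poincar\'e ball model and apply Theorem~\ref{thm1}. For any positive radial solution $u$ of~\eqref{radial_eq_eucl} defined on a maximal interval $[0,R)$, set
\[
u_H(x) := \left(\frac{2}{1-|x|^{2}}\right)^{-(n-2k)/2} u(x)
\]
on the hyperbolic ball $\{x:|x|<R\}$. The conformal covariance identity recalled earlier in the paper turns~\eqref{radial_eq_eucl} into $P_k u_H = u_H^{(n+2k)/(n-2k)}$, so $u_H$ is a positive smooth radial solution of the GJMS equation there. Conversely, every profile $u_H = A/(\cosh^{2}(r(x)/2)+B)^{(n-2k)/2}$ coming from~\eqref{soln} pulls back, via $\cosh^{2}(r(x)/2) = 1/(1-|x|^{2})$, to the explicit Euclidean formula
\[
u(x) = \frac{2^{(n-2k)/2} A}{\bigl(1 + B(1-|x|^{2})\bigr)^{(n-2k)/2}}.
\]
A case analysis on the parameter $B$ then splits the admissible profiles into an entire branch (regime $-1<B<0$) matching the form $(2a/(a^{2}+r^{2}))^{n/2-k}$ asserted in~(2), and a non-entire branch whose Euclidean pullback blows up on a sphere; after the natural normalization implicit in~\eqref{radial_eq_eucl}, this sphere is exactly $\partial B_{1}(0)$.

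\textbf{Proof steps.} For assertion~(2), the entire solution $U$ with $U(0)=\alpha$ uniquely determines all radial initial data $(-\Delta)^{m}U(0)$; thus if $\beta_{1}=\beta$ (with the higher $\beta_{m}$ agreeing with those of $U$), local ODE uniqueness forces $u=U$ in a neighborhood of the origin, and analyticity propagates the equality globally. For assertion~(1) and the inequality $\beta_{1}\le\beta$, suppose $u$ is a positive radial solution that is not $U$. Then $u_H$ is a positive smooth solution of the hyperbolic GJMS equation on a hyperbolic ball; after verifying the hypotheses of Theorem~\ref{thm1} (the local integrability conditions $u_H\in L^{2n/(n-2k)}_{\mathrm{loc}}$ and $f'(u_H)\in L^{n/(2k)}_{\mathrm{loc}}$, both immediate from smoothness and local boundedness of $u_H$), Theorem~\ref{thm1} forces $u_H$ to coincide with a profile of the form~\eqref{soln} on its domain. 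The strict inequality $\beta_{1}<\beta$ then follows from a direct computation: among the profiles in~\eqref{soln} with fixed $u(0)=\alpha$, the non-entire branch produces a value of $\Delta u(0)$ strictly less than the entire branch's value $\Delta U(0)=\beta$, while the regime $\beta_{1}>\beta$ corresponds to no admissible profile and hence admits no positive solution. Reading off $u$ from $u_H$ finally yields the blow-up at $\partial B_{1}(0)$.

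\textbf{Main obstacle.} The principal difficulty is invoking Theorem~\ref{thm1}, which is stated globally on $\mathbb{H}^{n}$, when $u_H$ is a priori defined only on a hyperbolic ball. I would resolve this by exploiting the rigidity of the family~\eqref{soln}: once $u_H$ is shown to agree locally with such a profile (using the real-analyticity of solutions of the radial ODE and the finite-dimensional nature of the admissible family), the profile extends to its natural maximal domain, and the Euclidean pullback $u$ thereby extends to $B_{1}(0)$ with the prescribed boundary blow-up. A secondary subtlety is the correct handling of the higher-order initial data $\beta_{2},\ldots,\beta_{k-1}$ in~\eqref{radial_eq_eucl}: they are not free parameters once $u(0)=\alpha$ and $\beta_{1}$ are fixed among \emph{positive} radial solutions, and their compatibility with the classified profile must be read off from~\eqref{soln} rather than imposed by hand.
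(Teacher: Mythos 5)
Your overall strategy coincides with the paper's: push the Euclidean radial IVP forward to the Poincar\'e ball via the conformal factor $\bigl(\tfrac{2}{1-|x|^2}\bigr)^{-(n-2k)/2}$, invoke Theorem~\ref{thm1} to pin down the hyperbolic profile, and pull the classified profile back to read off the Euclidean form. Your identification of the entire branch (the pullback $2^{(n-2k)/2}A/(1+B(1-|x|^2))^{(n-2k)/2}$ with $-1<B<0$ reducing to $(2a/(a^2+r^2))^{(n-2k)/2}$) is correct, and matches the paper's computation that the pullback of~\eqref{soln} is the Talenti-type profile $\phi$.

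The gap is in the blow-up case (assertion~(1)). You derive it from a ``non-entire branch'' of the profile family~\eqref{soln}, but this is not what the paper does and is not actually correct. First, you never address the \emph{constant} alternative $u\equiv C_0$ in Theorem~\ref{thm1}; the paper's proof of assertion~(1) rests precisely on that case, since the pullback of a constant hyperbolic solution is $C_0\bigl(\tfrac{2}{1-|x|^2}\bigr)^{(n-2k)/2}$, i.e.\ the $V$-form solution that blows up exactly on $\partial B_1(0)$. Second, your claim that the $B\notin(-1,0)$ branch of~\eqref{soln} blows up on $\partial B_1$ ``after the natural normalization'' does not hold: for $B>0$ the denominator $(1+B)-B|x|^2$ vanishes at $|x|^2=(1+B)/B>1$, so the blow-up sphere lies strictly \emph{outside} the unit ball, and the rescaling that would bring it to radius $1$ changes the initial value $\alpha=u(0)$. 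So the constant alternative cannot be absorbed into the profile family by ``normalization''; it must be treated separately, as the paper does. You should also make explicit that an IVP solution blowing up at a radius $R<1$ never produces a globally defined $u_H$ on $\mathbb{H}^n$, so Theorem~\ref{thm1} cannot be applied to it directly; your appeal to analyticity and the finite-dimensional profile family is the right instinct to close this, and incidentally it is a gap the paper's own one-paragraph proof also glides over.
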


Clearly, if $u\equiv 1$, we are at the Poincar\'e metric, which gives the constant $Q$-curvature. For nontrivial positive solutions, they correspond to the blow-up solutions of the equation \eqref{radial_eq_eucl} in $\rn$.

The authors of \cite{GOR08} also considered the radial solution of \eqref{ODE_2} for a prescribed smooth radial $Q$-curvature function using the shooting method. They provided the necessary conditions of the nonconstant $Q$-curvature in order to have the radial solution. More precisely, they showed that there exists a radial solution of 
\begin{equation}
  \begin{cases}
    \begin{aligned}
      \Delta^2u = \tilde{Q}u^{\frac{n+4}{n-4}} &\text{ in } B_1(0)\subset\rn,\\
      u > 0 \quad\;\; &\text{ in } B_1, \\
      u = \infty \quad &\text{ on } \partial B_1.
    \end{aligned}
  \end{cases}
\end{equation}
if the function $\tilde{Q}: B_1(0)\to\mathbb{R}$ satisfies 
\begin{enumerate}
  \item there are two positive constants $Q_0, Q_1$ such that $0<Q_0<\tilde{Q}(r)<Q_1$ on $[0,1], \tilde{Q}\in C^1[0.1]$,
  \item there exists $q\in[0,1)$ such that $r^q\tilde{Q}(r)$ is nondecreasing.
\end{enumerate}

Now if we consider the problem
\begin{equation}\label{Qeq_k_nonconstant}
  P_k u = \tilde{Q}u^{\frac{n+2k}{n-2k}}.
\end{equation}
where $\tilde{Q}$ is a function on $\hn$. Our classification result also provide exact information on the right hand side, we then obtain the following result as a consequence of Corollary \ref{cor1}.
\begin{corollary}\label{cor2}
   Suppose that $\tilde{Q}$ is decreasing such that $\tilde{Q}\in L^\infty_{loc}(\hn)$. If there exists a radial positive solution $u\in L^{\frac{2n}{n-2k}}_{loc}\cap W^{k,2}(\hn)$ of the equation \eqref{Qeq_k_nonconstant} on $\hn$, then $\tilde{Q}$ must be constant.
\end{corollary}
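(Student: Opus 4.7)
The strategy is to reduce the non-autonomous equation \eqref{Qeq_k_nonconstant} to one of the form $P_k u = f(u)$ and then invoke Corollary \ref{cor1}. Since $u$ is radial with respect to some point $P \in \hn$ and $P_k$ preserves radial symmetry, the identity $\tilde{Q} = (P_k u)/u^p$ forces $\tilde{Q}$ to be radially symmetric about the same center $P$ wherever $u > 0$. If $u$ is constant, then $\tilde{Q}$ is automatically constant; so assume $u$ is nontrivial.

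Writing $u = u(r)$ with $r = d(\cdot, P)$, I would next argue that $u(r)$ is strictly monotone decreasing. This uses the radial ODE
$$ P_k u(r) = \tilde{Q}(r)\, u(r)^p, $$
together with the positivity of $u$ and the decay of $u$ at infinity enforced by $u \in L^{\frac{2n}{n-2k}}_{loc} \cap W^{k,2}(\hn)$. Once strict monotonicity is established, the inverse $r(t) := u^{-1}(t)$ is a well-defined strictly decreasing function on $(0, u(P)]$, and we set
$$ f(t) := \tilde{Q}(r(t))\, t^{p} \quad \text{for } t \in (0, u(P)], \qquad f(0) := 0. $$
By construction, $u$ solves $P_k u = f(u)$ on all of $\hn$.

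It then remains to verify the hypotheses of Corollary \ref{cor1} for this $f$: $f(0)=0$ is immediate; $f$ is locally Lipschitz and nondecreasing from $\tilde{Q} \in L^{\infty}_{loc}$, the smoothness of $r(t)$, and the monotonicity of $\tilde{Q}$; the quotient $f(t)/t^p = \tilde{Q}(r(t))$ is non-increasing in $t$, since $r(t)$ is strictly decreasing in $t$ and $\tilde{Q}$ is monotone in $r$ in the compatible direction; and $f'(u) \in L^{\frac{n}{2k}}_{loc}(\hn)$ is inherited from the integrability of $u$. Corollary \ref{cor1} then yields $f(t) = c\, t^p$ on $[0, \max_x u]$, i.e.\ $\tilde{Q}(r(t)) \equiv c$ for all $t \in (0, u(P)]$. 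Since $r(t)$ sweeps out all of $[0, \infty)$ as $t$ varies, $\tilde{Q}$ must be constant on $\hn$.

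The main obstacle is establishing the strict monotonicity of the radial profile $u(r)$ in the higher-order setting $k \geq 2$, where the classical maximum/Hopf principle for $P_k$ is unavailable. One either performs a direct ODE analysis exploiting the $L^{\frac{2n}{n-2k}}_{loc} \cap W^{k,2}$ regularity, or reformulates the problem as an integral equation and invokes the moving sphere machinery developed earlier in this paper. A secondary technical issue is the Lipschitz regularity of $f$ when $\tilde{Q}$ is only bounded and monotone; this can be addressed by mild regularization of $\tilde{Q}$ or by reinterpreting Corollary \ref{cor1} in a slightly weakened sense compatible with the bounded-variation structure of $f$.
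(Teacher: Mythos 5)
Your reduction to Corollary \ref{cor1} contains a concrete sign error that breaks the argument. You write $f(t) = \tilde{Q}(r(t))\,t^p$ with $r(t) = u^{-1}(t)$, and claim that $f(t)/t^p = \tilde{Q}(r(t))$ is non-increasing ``since $r(t)$ is strictly decreasing in $t$ and $\tilde{Q}$ is monotone in $r$ in the compatible direction.'' But check the signs: if $u$ is radially decreasing then $r(t)=u^{-1}(t)$ is a decreasing function of $t$, and if $\tilde Q$ is \emph{decreasing} in the radial variable (as the corollary assumes), then the composite $\tilde Q(r(t))$ is the composition of two decreasing maps, hence \emph{non-decreasing} in $t$ --- exactly the opposite of what Corollary \ref{cor1} requires. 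So Corollary \ref{cor1} simply does not apply to your constructed $f$, and the reduction does not go through. There is also a latent circularity in the way you propose to handle the ``main obstacle'': you suggest establishing strict radial monotonicity of $u$ by ``invoking the moving sphere machinery developed earlier in this paper''; but that machinery, once run on the equation $P_ku = \tilde Q u^q$, already delivers the explicit closed form of $u$ (as in the proof of Theorem \ref{thm1}), at which point $\tilde Q = P_ku/u^q$ is immediately seen to be constant and the detour through Corollary \ref{cor1} becomes superfluous.

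For the record, the paper does not attempt your reduction. Instead it repeats the moving-sphere argument directly on $P_k u = \tilde Q u^q$: it rewrites $w_\lambda = u_\lambda - u$ as a Green's-function integral with kernel $K(x,y)$, and in the key estimate over the bad set $\Sigma_\lambda^-$ it uses the monotonicity of $\tilde Q$ under the inversion $y\mapsto y^\lambda$ together with $\tilde Q\in L^\infty_{loc}$ to absorb $\tilde Q$ into a bounded constant, reducing the estimate to the one already proved for Theorem \ref{thm1}. Running the sphere argument to its critical scale then forces $u$ into the explicit form \eqref{soln}, from which $\tilde Q$ must be constant. This also sidesteps the Lipschitz-regularity issue you flag at the end, since $\tilde Q$ never has to be composed with $u^{-1}$: only its local boundedness and monotonicity under the Kelvin inversion enter.
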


The following remarks are in order. The prescribed $Q$-curvature problem we consider in this paper is on the noncompact hyperbolic space $\mathbb{H}^n$.  In contrast to the large literature on the existence of solutions to the prescribed Q-curvature problem on compact Riemannian manifolds, see for example Chang and Yang \cite{ChangYang-Annals}, Gursky \cite{Gursky}, Wei and Xu \cite{WX09}, Djadli and Malchiodi \cite{DM08}, Robert \cite{Robert}, Djadli, Hebey and Ledoux \cite{DHL}, (see also Chang \cite{Chang} and Hang and Yang \cite{HY16}), there has been very little study in our noncompact setting on the entire hyperbolic space other than the work of \cite{GOR08}. In \cites{GOR08}, Grunau, Ahmedou and Reichel considered the fourth order Paneitz equation in hyperbolic space $\hn$ when $n>4$. Our Theorem \ref{PrescribedH} and Corollary \ref{cor2} offer some new understandings towards this problem. 

The organization of this paper is as follows. In Section \ref{sec:prelim}, we provide some preliminaries on Helgason-Fourier analysis, foliations and key inequalities on hyperbolic spaces. In Section \ref{sec:Kelvin}, we introduce the precise definition of our Kelvin transforms on the hyperbolic spaces. Section \ref{sec:proofs} is dedicated to the proofs of main theorems and corollaries.

\section{Notations and Preliminaries}\label{sec:prelim}
In this section we present some preliminaries concerning hyperbolic space, the Helgason-Fourier analysis theory, Hardy-Littlewood inequalities and GJMS operators,  which are needed in the sequel.
\subsection{Models of hyperbolic spaces}\label{sec2.1}
The hyperbolic space $\hn$ $(n\geq 2)$ is a complete, simply connected Riemannian manifold with constant sectional curvature $-1$.
There are several analytic models of hyperbolic spaces, all of which are equivalent.
Among them, we describe two models here.

\begin{itemize}
    \item \textit{The Half-space model}:
    It is given by $\mathbb{R}^{n-1}\times \mathbb{R}^+=\{(x_1,\cdots,x_{n-1},x_n):x_n>0\}$,
    equipped with the Riemannian metric
    $$ds^2=\frac{dx_1^2+\cdots+dx_n^2}{x_n^2}.$$
    The hyperbolic volume element is $dV = \frac{dx}{x_n^n}$, where $dx$ is the Lebesgue measure on $\mathbb{R}^n$. The hyperbolic gradient is $\nabla_\mathbb{H} = x_n \nabla$ and the Laplace-Beltrami operator on $\mathbb{H}^n$ is given by

$$
  \Delta\mathbb{H} = x_n^2 \Delta - (n-2) x_n \frac{\partial}{\partial x_n},
$$
where $\Delta$ is the usual Laplacian on $\mathbb{R}^n$.

    \item \textit{The Poincar\'e ball model}:
    It is given by the open unit ball $\mathbb{B}^n=\{x=(x_1,\cdots,x_n):x_1^2+\cdots+x_n^2<1\}\in\mathbb{R}^n$ equipped with the Poincar\'e metric
    $$ds^2=\frac{4\left(dx_1^2+\cdots+dx_n^2\right)}{\left(1-|x|^2\right)^2}.$$
    The distance from $x\in\mathbb{B}^n$ to the origin is $\rho(x,0)=\log\frac{1+|x|}{1-|x|}$.
    The hyperbolic volume element is $dV=\left(\frac{2}{1-|x|^n}\right)^2dx$.
    The hyperbolic gradient is $\nabla_{\mathbb{H}^n}=\frac{1-|x|^2}{2}\nabla$ and the associated Laplace-Beltrami operator is given by
    $$\Delta_{\mathbb{H}}=\frac{1-|x|^2}{4}\left((1-|x|^2)\Delta+2(n-2)\sum_{i=1}^nx_i\frac{\partial}{\partial x_i}\right).$$
\end{itemize}

\subsection{Foliations of hyperbolic spaces}\label{sec2.2}
In this subsection, we introduce the foliation of $\mathbb{H}^n$ which is needed in the study of the hyperbolic 
symmetry of the solution (see \cites{ADG1,ADG2} for more detail). Let $\mathbb{R}^{n,1} = ( \mathbb{R}^{n+1} , g )$ 
be the Minkowski space, where the metric $ds^2 = -dx_0^2 + dx_1^2 + \cdots + dx_n^2$. 
The hyperboloid model of hyperbolic space $\mathbb{H}^n$ is the submanifold $\{ x \in \mathbb{R}^{n,1}: g(x,x) = -1, x_0 > 0 \}$. A particular directional foliation can be obtained by choosing any direction in the $x_1, x_2, \cdots , x_n$ plane. Without loss of generality, we consider $x_1$ direction. Denote $\mathbb{R}^{n,1} = \mathbb{R}^{1,1} \times \mathbb{R}^{n-1}$, where $(x_1, x_0) \in \mathbb{R}^{1,1}$. Define $A_t = \tilde{A}_t \otimes Id_{\mathbb{R}^{n-1}}$, where $\tilde{A}_t$ is the hyperbolic rotation on $\mathbb{R}^{1,1}$:
$$
  \tilde{A}_t = \left(\begin{array}{cc}\cosh t & \sinh t \\\sinh t & \cosh t \end{array}\right).
$$
The reflection is defined by $I (x_0, x_1, x_2,  \cdots, x_n) = (x_0, -x_1, x_2, \cdots, x_n)$. Let $U = \mathbb{H}^n \cap \{ x_1 = 0 \}$ and $U_t = A_t (U)$, then $\mathbb{H}^n$ is foliated by $U_t$, i.e. $\mathbb{H}^n = \cup_{t \in \mathbb{R}} U_t$. Moreover, if we define $I_t = A_t \circ I \circ A_{-t}$, then it is easy to verify that $I_t (U_t) = U_t$.

\subsection{The Helgason-Fourier transform on hyperbolic spaces}

In this subsection, we will introduce the Helgason-Fourier transform on hyperbolic spaces and 
define the fractional Laplacian on hyperbolic spaces using the Helgason-Fourier analysis. 
For complete details, we refer to \cites{H-F1,H-F2}. Let
$$e_{\lambda,\zeta}(x)=\left(\frac{\sqrt{1-|x|^2}}{|x-\zeta|}\right)^{n-1+i\lambda}, \; x\in\mathbb{B}^n, \lambda\in\mathbb{R}, \zeta\in\mathbb{S}^{n-1}.$$
Then the Fourier transform of a function $f$ on $\mathbb{H}^n$ (ball model) is defined as
$$\hat{f}(\lambda,\zeta)=\int_{\bn}f(x)e_{-\lambda,\zeta}(x)dV,$$
provided the integral exists. Moreover, the following inversion formula holds for $f\in C^\infty_0(\bn)$:
$$f(x)=D_n\int_{-\infty}^\infty\int_{\mathbb{S}^{n-1}}\hat{f}(\lambda,\zeta)e_{\lambda,\zeta}(x)|\mathfrak{c}(\lambda)|^{-2}d\lambda d\sigma,$$
where $D_n=(2^{3-n}\pi|\mathbb{S}^{n-1}|)^{-1}$ and $\mathfrak{c}(\lambda)$ is the Harish-Chandra's $\mathfrak{c}$-function given by
$$\mathfrak{c}(\lambda)=\frac{2^{n-1-i\lambda}\Gamma(n/2)\Gamma(i\lambda)}{\Gamma(\frac{n-1+i\lambda}{2})\Gamma(\frac{1+i\lambda}{2})}.$$
There also holds the Plancherel formula:
$$\int_{\bn}|f(x)|^2dV=D_n\int_{-\infty}^\infty\int_{\mathbb{S}^{n-1}}\hat{f}(\lambda,\zeta)|\mathfrak{c}(\lambda)|^{-2}d\lambda d\sigma.$$

\subsection{Hardy-Littlewood-Sobolev inequality}

The Hardy-Littlewood-Sobolev inequality inequality on hyperbolic ball model $\bn$ is equivalent to the one on the hyperbolic upper half spaces model.
It was first proved on half spaces by Beckner \cite{Beckner}, and then on Poincar\'e ball by Lu and Yang \cite{LY1}.

\textbf{Theorem.} \textit{Let $0<\lambda<n$ and $p=\frac{2n }{2n-\lambda}$. Then for $f,g\in L^p(\bn)$,}
\begin{equation}\label{HLS}
    \left|\int_{\bn}\int_{\bn}\frac{f(x)g(y)}{(2\sinh (\frac{\rho(x,y)}{2}))^\lambda}dV_xdV_y\right|\leq C_{n,\lambda}\|f\|_p\|g\|_p,
\end{equation}
\textit{where $\rho(x,y)$ denotes the geodesic distance between $x$ and $y$ and }
\begin{equation*}
    C_{n,\lambda}=\pi^{\lambda/2}\frac{\Gamma(\frac{n}{2}-\frac{\lambda}{2})}{\Gamma(n-\frac{\lambda}{2})}\left(\frac{\Gamma(\frac{n}{2})}{\Gamma(n)}\right)^{-1+\frac{\lambda}{n}}
\end{equation*}
\textit{is the best constant for the classical Hardy-Littlewood-Sobolev constant on $\mathbb{R}^n$.
Furthermore, the constant $C_{n,\lambda}$ is sharp and there is no nonzero extremal function for the inequality (\ref{HLS}).}

\subsection{GJMS operator and its fundamental solution}

GJMS operators were discovered in the work of Graham, Jenne, Mason, and Sparling \cite{GJMS2} based on
the construction of ambient metric by C. Fefferman and Graham \cite{FeffermanGr}.

A differential operator $D$ is conformally covariant of bidegree $(a,b)\in\mathbb{R}^2$ in dimension $n$ if for any
$n$-dimensional Riemannian manifold $(M,g_0)$,
\begin{equation*}
  g_\omega=e^{2\omega}g_0, \omega\in C^\infty(M) \implies D_\omega=e^{-b\omega}D_0\mu(e^{a\omega}),
\end{equation*}
where for any $f\in C^\infty(M), \mu(f)$ is multiplication by $f$. Here the subscripts indicate the corresponding metric.
It was shown in \cite{GJMS2} and \cite{Bra} that GJMS operators are
conformally covariant differential operators of bidegree $(\frac{n}{2}-k,\frac{n}{2}+k)$.

Recently, it has been shown by Lu and Yang \cite{LY2}, that the fundamental solution of $P_k$ on $\hn$ has the following explicit expression:
\begin{equation*}
 P_k^{-1}(\rho)=\frac{\Gamma(\frac{n}{2})}{2^n\pi^{\frac{n}{2}}\Gamma(k)\Gamma(k+1)}\frac{\left(\cosh\frac{\rho}{2}\right)^{-n}}{\left(\sinh\frac{\rho}{2}\right)^{n-2k}}F\left(k-\frac{n-2}{2},k;k+1;\cosh^{-2}\frac{\rho}{2}\right),
\end{equation*}
where $F$ is the hypergeomtric function. It is immediate to see that for any fixed $y\in \mathbb{B}^n, G(x,y)=P_k^{-1}(\rho)$ is a positive radially decreasing function with respect
to the geodesic distance $\rho=\rho(x,y)$.
Furthermore, we have, for $1\leq k < \frac{n}{2}$,
\begin{equation}\label{pk_est}
P_k^{-1}(\rho)\leq\frac{1}{\gamma_n(2k)}\left[\left(\frac{1}{2\sinh\frac{\rho}{2}}\right)^{n-2k}-\left(\frac{1}{2\cosh\frac{\rho}{2}}\right)^{n-2k}\right], \;\rho>0.
\end{equation}

\section{Kelvin Transform on Hyperbolic Spaces \texorpdfstring{$\hn$}{}}\label{sec:Kelvin}

We first recall that on Euclidean spaces spherical inversion $i_{a,\lambda}$ with respect to the sphere $\partial B_\lambda (x_0)$ is defined as follows:
\begin{align}\label{inv_rn}
  i_{x_0,\lambda}:\quad &\rn\setminus\{ x_0 \} \rightarrow \rn\setminus\{x_0\}\nonumber\\
&x\mapsto x_0+\frac{\lambda^2(x-x_0)}{|x-x_0|^2}.
\end{align}
Here we will introduce a Kelvin type transform on the hyperbolic space $\hn$.
Consider a geodesic ball $B_\lambda(P) \subset \mathbb{H}^n$ centered at $P$ with radius $\lambda$, we want to obtain a partially defined conformal mapping $\phi_{P,\lambda}$ which fixes the boundary $\partial B_\lambda(p)$ and maps the interior of the geodesic ball to the exterior, and vice versa.

It is known that the Riemannian metric on $\hn$ can be written as $g=dr^2+\sinh^2(r)d\theta^2$. For any point $x=(r,\theta)\in\hn$
in geodesic polar coordinates centered at $P$, we denote
\[x^\lambda:=\phi_{P,\lambda}(r,\theta) = (\varphi_\lambda(r),\theta).\]
Then the new metric under the inversion map is
$\tilde{g}=(\varphi_\lambda(r)')^2 dr^2+\sinh^2(\varphi_\lambda (r))d\theta^2$.
Since $\tilde{g}$ is conformal to $g$, we obtain the following ODE:
\begin{equation}
  \begin{cases}
    &\varphi_\lambda^{\prime}(r)=-\frac{\sinh \varphi_\lambda(r)}{\sinh r},\\
    &\varphi_\lambda(\lambda)=\lambda.
  \end{cases}
\end{equation}
Integrating both sides, we obtain
\begin{align*}
  \int\frac{1}{\sinh \varphi_\lambda(r)}dh_\lambda(r)=-\int\frac{1}{\sinh r}dr,
\end{align*}
so that
\begin{equation*}
  \log(\tanh\frac{\varphi_\lambda(r)}{2})=\log\frac{C}{\tanh\frac{r}{2}}
\end{equation*}
subject to the boundary condition
\begin{equation*}
  \log(\tanh\frac{\lambda}{2})=\log\frac{C}{\tanh\frac{\lambda}{2}}.
\end{equation*}
We thus obtain the solution:
\[\varphi_\lambda(r)=2\arctanh\left[\left(\tanh\frac{\lambda}{2}\right)^{2}\left(\tanh\frac{r}{2}\right)^{-1}\right]\]
The Jacobian of  $\phi_{P,\lambda}$ is
\begin{equation}\label{Jacobian}
  |J_{\phi_{P,\lambda}}(r,\theta)|=\det\left(\frac{\partial x^\lambda_i}{\partial x_j}\right)=\left(\frac{(\tanh\frac{\lambda}{2})^2(\csch \frac{r}{2})^{2}}{1-(\tanh\frac{\lambda}{2})^4(\coth\frac{r}{2})^{2}}\right)^n.
\end{equation}
Due to the fact that the hyperbolic space $\hn$ is merely conformal to a unit ball $\bn\subset\rn$,
 $\varphi_\lambda(r)$ is only defined for $r>2\arctanh(\tanh^2\frac{\lambda}{2})$.
The conformal infinity as the boundary of the Poincar\'e ball is mapped to the set, which we call the limit sphere
\[S_{\lambda^\sharp}(P)=\partial B_{\lambda^\sharp}(P)=\{x\mid \rho(x,P)=\lambda^\sharp:=2\arctanh(\tanh^2\frac{\lambda}{2})\},\]
under the inversion. Now we define the Kelvin transform of $u$ with respect to the geodesic sphere centered at $P$ with radius $\lambda$ as
\begin{equation}\label{Kelvin}
  u_{P,\lambda}(x) = |J_{\phi_{P,\lambda}}|^{\frac{n-2k}{2n}}u(x^\lambda),\; \forall x\in\hn\setminus B_{\lambda^\sharp}(P).
\end{equation}

\section{Proofs}\label{sec:proofs}

First, we show that the differential equation (\ref{p_k}) is equivalent to an integral equation using the Helgason-Fourier transform on hyperbolic spaces.
\begin{lemma}\label{lemma1}
  If $u\in W^{k,2}_0(\hn)$ is a positive weak solution of the higher order
  differential equation (\ref{p_k}), then under the assumptions of $f$ in Theorem \ref{thm1}, $u$ must satisfy the following integral equation for the
  Green's function $G(x, y)$ of the GJMS operator $P_k$ on the hyperbolic ball $\mathbb{B}^n$:
  \begin{equation}\label{inteq0}
      u(x)=\int_{\mathbb{B}^n} G(x,y)f(u(y))dV_y.
  \end{equation}
\end{lemma}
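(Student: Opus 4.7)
The plan is to set $v(x) := \int_{\mathbb{B}^n} G(x,y) f(u(y))\, dV_y$ and show that $v \equiv u$. This decomposes naturally into three stages: first, verify that $v$ is well-defined and sits in the right function space; second, show $P_k v = f(u)$ in the weak sense using the defining property of the Green's function; third, conclude $u = v$ via the injectivity of $P_k$ on $W^{k,2}_0(\mathbb{H}^n)$.

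For the first stage, I would extract growth information on $f$ from the hypotheses. Since $f$ is Lipschitz with $f(0)=0$, we have $|f(t)| \leq L t$ for $t$ near $0$, and since $f(t)/t^p$ is non-increasing with $p = \frac{n+2k}{n-2k}$, we obtain $|f(t)| \leq C t^p$ for $t \geq 1$. Hence $|f(u)| \leq C(u + u^p)$, and since $u \in W^{k,2}_0(\mathbb{H}^n) \hookrightarrow L^{\frac{2n}{n-2k}}(\mathbb{H}^n)$ by Sobolev embedding, we get $f(u) \in L^{\frac{2n}{n+2k}}(\mathbb{H}^n)$. Combined with the pointwise bound \eqref{pk_est} on the Green's function, which shows $G(x,y)$ is dominated by $(2\sinh(\rho(x,y)/2))^{-(n-2k)}$, the hyperbolic Hardy-Littlewood-Sobolev inequality \eqref{HLS} applied with $\lambda = n - 2k$ yields $v \in L^{\frac{2n}{n-2k}}(\mathbb{H}^n)$, with the integral defining $v$ absolutely convergent for a.e.\ $x$.

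For the second stage, I would use the Helgason-Fourier transform. The operator $P_k$ acts as a Fourier multiplier with symbol $m_k(\lambda) = \prod_{j=0}^{k-1}\bigl(\lambda^2 + (\tfrac{2j+1}{2})^2\bigr)$, obtained directly from $P_1$ having symbol $\lambda^2 + \tfrac{1}{4}$ and the iterative definition $P_k = P_1(P_1+2)\cdots(P_1 + k(k-1))$. Since $G(x,y)$ is the distributional inverse of $P_k$ (a fact encoded in its explicit formula from \cite{LY2}), convolution with $G$ inverts multiplication by $m_k$ on the Fourier side. Testing against $\varphi \in C_c^\infty(\mathbb{B}^n)$ and using Fubini (justified by the integrability established in stage one), I would verify $\int P_k\varphi \cdot v\, dV = \int \varphi \cdot f(u)\, dV$, i.e.\ $P_k v = f(u)$ weakly.

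For the final stage, setting $w := u - v \in W^{k,2}_0(\mathbb{H}^n)$ gives $P_k w = 0$ weakly. Pairing with $w$ and using the Plancherel formula,
\begin{equation*}
  0 = \langle P_k w, w \rangle = D_n \int_{-\infty}^{\infty}\!\!\int_{\mathbb{S}^{n-1}} m_k(\lambda) |\widehat{w}(\lambda,\zeta)|^2 |\mathfrak{c}(\lambda)|^{-2}\, d\lambda\, d\sigma,
\end{equation*}
and since $m_k(\lambda) \geq \prod_{j=0}^{k-1}(\tfrac{2j+1}{2})^2 > 0$ uniformly in $\lambda$, we conclude $\widehat{w} \equiv 0$, hence $w \equiv 0$. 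I expect the main technical obstacle to lie in stage two: justifying the Fubini interchange and the identification $P_k G(\cdot,y) = \delta_y$ rigorously in the weak formulation, given the diagonal singularity of $G$. The cleanest route is to truncate the convolution outside a small geodesic ball around $y$, apply the smooth calculation there, and pass to the limit using the integrability bounds from stage one together with the standard fundamental-solution property of $G$.
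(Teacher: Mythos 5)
Your proposal takes essentially the same route as the paper. The paper also verifies the identity by passing to the Helgason–Fourier side: it takes an arbitrary test function $\phi$, sets $\psi=P_k^{-1}\phi=\int G(\cdot,y)\phi(y)\,dV_y$, substitutes $\psi$ into the weak formulation, and uses Plancherel together with the positivity of the multiplier of $P_k$ to identify $u$ with $\int G(\cdot,y)f(u(y))\,dV_y$ in the sense of distributions. Your version packages the same argument slightly differently — defining $v:=\int G(\cdot,y)f(u(y))\,dV_y$ outright and showing $P_k(u-v)=0 \Rightarrow u=v$ via Plancherel — but the key ingredients (explicit positive Fourier multiplier of $P_k$, the Green's function as its inverse, Plancherel to conclude) are identical. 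One small remark: your worry about justifying $P_kG(\cdot,y)=\delta_y$ by a diagonal truncation is actually unnecessary once you commit to the Fourier-multiplier route, which is the paper's point; the Plancherel identity does all the work at once, without ever needing a pointwise fundamental-solution statement.
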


\begin{proof}
  $u\in W^{k,2}_0(\hn)$ is a weak solution of \eqref{inteq0} if and only if for any $\phi\in C^\infty_0(\hn)$,
  $$\inth \sum_{j=1}^kc_j\nabla^j_{\hn}u\nabla^j_{\hn}\phi = \inth f(u)\phi dV,$$
  where $c_j$'s are some constant depending on $P_k$.
  Using Helgason-Fourier transform on $\hn$, such definition of weak solution is equivalent to
  \begin{equation*}
      D_n\int_{-\infty}^{+\infty}\int_{\mathbb{S}^{n-1}}\left[\prod_{j=1}^k\left(\frac{\tau^2+(2k-1)^2}{4}\right)\right]\hat{u}(\tau,\sigma)\hat{\phi}(\tau,\sigma)|\mathfrak{c}(\tau)|^{-2}d\sigma d\tau = \inth f(u)\phi dV.
  \end{equation*}
  Let $\psi$ satisfy $P_k\psi = \phi$, that is, $\psi(x)=\inth G(x, y)\phi(y)dV_y$. Under Helgason-Fourier transform, we have
  \begin{equation*}
      \hat{\phi}(\tau,\sigma)=\prod_{j=1}^k\frac{\tau^2+(2k-1)^2}{4}\hat{\psi}(\tau,\sigma).
  \end{equation*}
  Now if we replace $\phi$ by $\psi$, we get
  \begin{equation*}
      D_n\int_{-\infty}^{+\infty}\int_{\mathbb{S}^{n-1}}\hat{u}(\tau,\sigma)\hat{\psi}(\tau,\sigma)|c(\tau)|^{-2}d\sigma d\tau = \inth f(u)\left(\inth G(x,y)\phi(y)dV_y\right)dV_x.
  \end{equation*}
  Applying Plancherel formula to the left hand side, we get
  \begin{equation*}
      \inth u(x)\phi(x)dV = \inth\left(\inth G(x,y)f(x)dV_x\right)\phi(y)dV_y,
  \end{equation*}
  which is true for any $\phi\in\cs(\hn)$. This immediately implies that a solution of the differential equation (\ref{p_k})
  is a solution of the integral equation (\ref{inteq0}).
    \end{proof}

\subsection{Proof of Theorem \ref{thm1}}
For simplicity, we will drop the subscript $P$ whenever it causes no confusion.
Now that GJMS operators are conformally covariant with bidegree $(\frac{n}{2}-k,\frac{n}{2}+k)$, under the Kelvin transform we defined in Section \ref{sec:Kelvin}, we have:
\begin{equation}\label{pk_conformal}
  P_k u_{\lambda} = |J_{\phi_{\lambda}}|^{\frac{n+2k}{2n}}(P_k u) \circ \phi_{\lambda}.
\end{equation}
Denote
  \[w_\lambda(x)= u_{\lambda}(x) - u(x).\]
One immediately sees that $w_\lambda$ is anti-symmetric since the Kelvin transform is its own inverse.
We next observe that
  \begin{equation}
    P_kw_\lambda(x)+c(x)w_\lambda(x)=0
  \end{equation}
holds for some $c(x)$ depending on  $u(x)$.
Now if we define $f_\lambda(u(x)):=f(u(x^\lambda))$, we have
  \[P_ku_{\lambda}(x) = |J_{\phi_{\lambda}}|^{\frac{n+2k}{2n}}(P_ku)\circ\phi_\lambda=|J_{\phi_{\lambda}}|^{\frac{n+2k}{2n}}f_\lambda(u(x))\]
when $x\in B_{r_0}^c(P)$.
As discussed in Section \ref{sec:Kelvin}, $x^\lambda$ approaches $\partial B_{\lambda^\sharp}(P)$ from the exterior as $x$ approaches infinity,
and $x^\lambda$ is undefined for $x$ inside $B_{\lambda^\sharp}$.

Similar to procedures of the method of moving plane, we need the following property near the initial position. With a bit of abuse of notation, we will still denote the radii of the reflection spheres $\lambda$ as we increase them from $\lambda^\sharp$.
\begin{lemma}\label{start1}
  For sufficiently small $\lambda=\lambda^\sharp+\varepsilon$, where $\varepsilon>0$, there holds,
  \begin{equation}\label{ineq0}
    w_\lambda(x)\leq 0,\; \forall x\in B^c_\lambda(P).
  \end{equation}
\end{lemma}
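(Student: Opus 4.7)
The plan is to convert Lemma \ref{start1} into an integral inequality for the positive part $w_\lambda^+ = \max(w_\lambda, 0)$ on $B_\lambda^c(P)$ and then obtain a contraction via the Hardy--Littlewood--Sobolev inequality \eqref{HLS}, provided the contact set where $w_\lambda > 0$ has small measure. This is the hyperbolic analogue of the standard starting lemma in the moving sphere method for integral equations on Euclidean space.

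First I would combine Lemma \ref{lemma1} with the conformal covariance \eqref{pk_conformal} and the induced transformation rule for the Green's function $G$ (which has bidegree $(\tfrac{n-2k}{2},\tfrac{n-2k}{2})$) to derive an integral representation for the Kelvin transform of the form
$$u_\lambda(x) = \int_{\mathbb{H}^n} G(x, y)\, |J_{\phi_\lambda}(y)|^{\frac{n+2k}{2n}}\, f\!\bigl(|J_{\phi_\lambda}(y)|^{-\frac{n-2k}{2n}}\, u_\lambda(y)\bigr)\, dV_y.$$
Writing the integral equation for $u$ and that for $u_\lambda$ over $B_\lambda^c(P) \cup (B_\lambda \setminus B_{\lambda^\sharp}) \cup B_{\lambda^\sharp}$ and pushing the annular piece back onto $B_\lambda^c$ through the change of variables $y \mapsto y^\lambda$, subtraction yields, for $x \in B_\lambda^c$, an identity
$$w_\lambda(x) = \int_{B_\lambda^c(P)} \bigl[G(x,y) - K_\lambda(x,y)\bigr]\, \Phi_\lambda(u(y), u_\lambda(y))\, dV_y,$$
where $K_\lambda(x,y) \le G(x,y)$ and $\Phi_\lambda$ compares the two nonlinearities with their Jacobian weights.

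Next, on the set $\Sigma^+_\lambda := \{y \in B_\lambda^c(P) : w_\lambda(y) > 0\}$, I would use the monotonicity of $f$ together with the hypothesis that $t^{-p}f(t)$ is non-increasing (with $p = \tfrac{n+2k}{n-2k}$) to bound $\Phi_\lambda$ pointwise by $c(y)\, w_\lambda^+(y)$, where $c(y) \le f'(u(y))$ by the mean value theorem. Combined with the Green's function estimate \eqref{pk_est} dominating $G$ by the HLS kernel $(2\sinh(\rho/2))^{-(n-2k)}$, the hyperbolic HLS inequality with $\lambda = n-2k$, and H\"older's inequality with exponent $n/(2k)$, this produces
$$\|w_\lambda^+\|_{L^{\frac{2n}{n-2k}}(B_\lambda^c)} \;\le\; C\, \|f'(u)\|_{L^{\frac{n}{2k}}(\Sigma^+_\lambda)}\, \|w_\lambda^+\|_{L^{\frac{2n}{n-2k}}(B_\lambda^c)},$$
so $w_\lambda^+ \equiv 0$ on $B_\lambda^c(P)$ as soon as $\|f'(u)\|_{L^{n/(2k)}(\Sigma^+_\lambda)} < 1/C$.

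The main obstacle is verifying this smallness for $\lambda$ close to $\lambda^\sharp$. From the explicit Jacobian \eqref{Jacobian}, one sees that $|J_{\phi_\lambda}(x)|^{(n-2k)/(2n)}$ behaves like $(\tanh(\lambda/2))^{n-2k}(\sinh(r/2))^{-(n-2k)}$, so $u_\lambda \to 0$ uniformly on compact subsets of $\mathbb{H}^n \setminus \{P\}$ as the initial radius shrinks. Combined with standard regularity for \eqref{inteq0} (making $u$ continuous and strictly positive away from $P$), this confines $\Sigma^+_\lambda$ either to an arbitrarily small neighborhood of $P$ or to a region near infinity; in the latter region, the exponential decay of $G$ and the local integrability $f'(u) \in L^{n/(2k)}_{\mathrm{loc}}(\mathbb{H}^n)$ together with the absolute continuity of $\int |f'(u)|^{n/(2k)}\, dV$ force $\|f'(u)\|_{L^{n/(2k)}(\Sigma^+_\lambda)}$ to vanish as $\lambda \downarrow \lambda^\sharp$, closing the argument.
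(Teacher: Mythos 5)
Your overall strategy (integral representation via conformal covariance, decomposition of $\mathbb{H}^n$ into the inner ball, annulus, and exterior, change of variables to collapse the annulus onto the exterior, positivity of the resulting kernel, HLS plus H\"older contraction) follows the paper closely. The step where you diverge is exactly the one the paper singles out as the crux, and it is where your argument breaks down.

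Your contraction inequality is stated in terms of $\|f'(u)\|_{L^{n/(2k)}(\Sigma^+_\lambda)}$, and you try to make this small by arguing that, as $\lambda\downarrow\lambda^\sharp$, the contact set $\Sigma^+_\lambda$ is pushed to infinity and then invoking ``absolute continuity of $\int |f'(u)|^{n/(2k)}dV$.'' This does not close: the hypothesis is only $f'(u)\in L^{n/(2k)}_{\mathrm{loc}}(\mathbb{H}^n)$, so the integral of $|f'(u)|^{n/(2k)}$ over sets near infinity need not be finite, let alone small, no matter how far out $\Sigma^+_\lambda$ sits. Absolute continuity of an indefinite integral applies to sets of small measure, and $\Sigma^+_\lambda\subset B_\lambda^c(P)$ can perfectly well have infinite volume. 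Also, the pointwise bound ``$c(y)\le f'(u(y))$ by the mean value theorem'' requires $f'$ to be non-increasing on the relevant interval; since $\xi$ lies between $u(y)$ and $u_\lambda(y)>u(y)$ on $\Sigma^+_\lambda$, the MVT gives $f'(\xi)$ with $\xi\ge u(y)$, and nothing in the hypotheses of Theorem~\ref{thm1} forces $f'(\xi)\le f'(u(y))$.

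The paper's fix, which you should adopt, is to keep $f'(u_\lambda)$ (not $f'(u)$) in the H\"older step, producing $\|w_\lambda\|_{L^q(\Sigma_\lambda^-)}\le C\|f'(u_\lambda)\|_{L^{n/(2k)}(\Sigma_\lambda^-)}\|w_\lambda\|_{L^q(\Sigma_\lambda^-)}$, and then change variables under the Kelvin inversion $\phi_\lambda$ as in \eqref{reflect_f'}. Because $\phi_\lambda$ maps $B_\lambda^c(P)$ onto the thin annulus $\tilde B_\lambda=B_\lambda\setminus B_{\lambda^\sharp}$, the norm $\|f'(u_\lambda)\|_{L^{n/(2k)}(\Sigma_\lambda^-)}$ is dominated by $\bigl(\int_{B_\lambda\setminus B_{\lambda^\sharp}}|f'(u)|^{n/(2k)}dV\bigr)^{2k/n}$. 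This annulus shrinks to the single point $P$ as $\lambda\downarrow 0$, so local $L^{n/(2k)}$-integrability of $f'(u)$ near $P$ makes the norm as small as desired — no information at infinity is needed, which is precisely why the $L^{n/(2k)}_{\mathrm{loc}}$ hypothesis suffices. (You also do not need the qualitative claim that $u_\lambda\to 0$ uniformly on compacts or any regularity upgrade for $u$; the change-of-variables trick replaces all of that.) I recommend replacing your final paragraph with this pull-back argument; the rest of your outline, in particular the kernel positivity via the comparison with the Euclidean kernel as in \eqref{K>0}, matches the paper.
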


\begin{proof}
Define
\begin{equation}
\Sigma_\lambda^-=\{x\in B^c_\lambda(P):w_\lambda(x)>0\}.
\end{equation}
In order to prove the lemma, it suffices to show that $\Sigma_\lambda^-$ has zero measure for sufficiently small $\varepsilon$.
Since we have that for any $x \in B^c_{\lambda^\sharp} \subset \hn$ where $\lambda^\sharp = 2 \operatorname{arctanh}(\tanh^2 \frac{\lambda}{2})$,
\begin{align*}
u_\lambda(x) &= |J_\lambda(x)|^{\frac{n-2k}{2n}} u(x^\lambda) = \int_{\hn} |J_\lambda(x)|^{\frac{n-2k}{2n}} G(x^\lambda, y) f(u)(y) dV_y.
\end{align*}
Noting the decomposition
\[\hn\setminus B_{\lambda^\sharp}(P)=\phi(B^c_\lambda(P))\cup B^c_\lambda(P),\]
we then have that for any $x \in \Sigma^-_\lambda = \{x \in B^c_\lambda: u_\lambda(x) > u(x)\}$
\begin{align}
u_\lambda(x) &- u(x) = \int_{\hn} |J_\lambda(x)|^{\frac{n-2k}{2n}} G(x^\lambda, y) f(u)(y) dV_y - \int_{\hn} G(x,y) f(u)(y) dV_y \nonumber\\
= &\int_{B_{\lambda^\sharp}} (|J_\lambda(x)|^{\frac{n-2k}{2n}} G(x^\lambda, y) - G(x,y)) f(u)(y) dV_y  \label{control_on_B_r0} \\
+ &\int_{B_\lambda \backslash B_{\lambda^\sharp}} (|J_\lambda(x)|^{\frac{n-2k}{2n}} G(x^\lambda, y) - G(x,y)) f(u)(y) dV_y \label{control_on_B1} \\
+ &\int_{B^c_{\lambda}} (|J_\lambda(x)|^{\frac{n-2k}{2n}} G(x^\lambda, y) - G(x,y)) f(u)(y) dV_y \label{int_out}
\end{align}

First by noticing $\rho(x,y)>\rho(x^\lambda,y)$ so that $G(x^\lambda, y) > G(x,y)$ for $x\in\Sigma_\lambda^-$ and $y\in B_{\lambda^\sharp}(P)$, the Green's function estimate of $P_k$ implies that
\begin{equation*}
G(x^\lambda,y)\lesssim \frac{1}{\sinh(\lambda-\lambda^\sharp)^{n-2k}}\sim\frac{1}{(\lambda-\lambda^\sharp)^{n-2k}} \text{ as } \varepsilon\to 0.
\end{equation*}
Together with Lebesgue differentiation theorem, we obtain that
\begin{align}
&\int_{B_{\lambda^\sharp}(P)} G(x^\lambda,y)f(u(y))dV_y \lesssim \frac{1}{(\lambda-\lambda^\sharp)^{n-2k}}\int_{B_{\lambda^\sharp}(P)}f(u(y))dV_y\nonumber\\
&\longrightarrow \frac{vol(B_{\lambda^\sharp})}{(\lambda-\lambda^\sharp)^{n-2k}}f(u(P)), \text{ as } \varepsilon\to 0.
\end{align}
Recall that the hyperbolic volume $vol(B_{r})=\frac{2\pi^{n/2}}{\Gamma(n/2)}\int_0^{r}\sinh^{n-1}tdt$ and $\sinh t \sim t$ when $t$ small,
so that $vol(B_{\lambda^\sharp})\sim (\lambda^\sharp)^n$ and $\lim\limits_{\lambda\to 0} \frac{vol(B_{\lambda^\sharp})}{(\lambda-\lambda^\sharp)^{n-2k}} = 0$.
Thus, when $\lambda$ is small enough, we conclude that \eqref{control_on_B_r0} converges to zero. 

We next claim the following,
\begin{equation}\label{G_relation}
|J_\lambda(y)|^{\frac{n-2k}{2n}} G(x, y^\lambda) = |J_\lambda(x)|^{\frac{n-2k}{2n}} G(x^\lambda, y).
\end{equation}  
To see this, consider any cut-off function $\eta \in C_0^\infty(\hn\setminus B_{r_0})$, we have
\begin{align*}
&\int_{\hn \setminus B_{r_0}} |J_\lambda(y)|^{\frac{n-2k}{2n}} G(x, y^\lambda) \eta(y) dV_y = \int_{\hn \setminus B_{r_0}} |J_\lambda(y)|^{-\frac{n-2k}{2n}} G(x,y) \eta(y^\lambda) |J_\lambda(y)| dV_y \\
&= \int_{\hn} G(x,y) |J_\lambda(y)|^{\frac{n+2k}{2n}} \eta(y^\lambda) dV_y = P_k^{-1}(\eta_\lambda).
\end{align*}
On the other hand, we have that 
\begin{align*}
&\int_{\hn \setminus B_{r_0}} |J_\lambda(x)|^{\frac{n-2k}{2n}} G(x^\lambda, y) \eta(y) dV_y = |J_\lambda(x)|^{\frac{n-2k}{2n}} \int_{\hn} G(x^\lambda, y) \eta(y) dV_y \\
&= |J_\lambda(x)|^{\frac{n-2k}{2n}} (P_k^{-1} \eta)(x^\lambda)
\end{align*}

By the conformal property of $P_k$, we have
\[
P_k^{-1}(\phi_\lambda) = |J_\lambda(x)|^{\frac{n-2k}{2n}} (P_k^{-1} \phi)(x^\lambda).
\]
Hence we showed that
\[
|J_\lambda(y)|^{\frac{n-2k}{2n}} G(x, y^\lambda) = |J_\lambda(x)|^{\frac{n-2k}{2n}} G(x^\lambda, y).
\]
By substituting $y$ with $y^\lambda$, we further have
\[
|J_\lambda(x)|^{\frac{n-2k}{2n}} |J_\lambda(y)|^{\frac{n-2k}{2n}} G(x^\lambda, y^\lambda) = G(x,y),
\]
Now for the integral in \eqref{control_on_B1}, it equals
\begin{align}
  &\int_{B^c_{\lambda}} (|J_\lambda(x)|^{\frac{n-2k}{2n}} G(x^\lambda, y^\lambda) - G(x, y^\lambda)) f(u)(y^\lambda) |J_\lambda(y)| dV_y \nonumber\\
  &=\int_{B^c_{\lambda}}\left(|J_\lambda(x)|^{\frac{n-2k}{2n}}|J_\lambda(y)|^{\frac{n-2k}{2n}}G(x^\lambda,y^\lambda)-|J_\lambda(y)|^\frac{n-2k}{2n}G(x,y^\lambda)\right)|J_\lambda(y)|^{\frac{n+2k}{2n}}f(u)(y^\lambda)dV_y\nonumber\\
  &=\int_{B^c_{\lambda}}\left(G(x,y)-|J_\lambda(y)|^\frac{n-2k}{2n}G(x,y^\lambda)\right)|J_\lambda(y)|^{\frac{n+2k}{2n}}f_\lambda(u(y))dV_y.
\end{align}
For the integral \eqref{int_out}, using the indentity \eqref{G_relation}, we have
\begin{align}
  &\int_{B^c_{\lambda}} (|J_\lambda(x)|^{\frac{n-2k}{2n}} G(x^\lambda, y) - G(x,y)) f(u)(y) dV_y \nonumber\\
  &=\int_{B^c_{\lambda}} (|J_\lambda(y)|^{\frac{n-2k}{2n}} G(x, y^\lambda) - G(x,y)) f(u)(y) dV_y
\end{align}
Therefore, we obtain that
\begin{align*}
u_\lambda(x) &- u(x) =\int_{B^c_\lambda(P)}K(x,y)\left(|J_{\phi_\lambda} (y)|^{\frac{n+2k}{2n}}f_\lambda(u(y)) - f(u(y))\right)dV_y,
\end{align*}
where
\begin{equation}
K(x,y) = 
 G(x,y) - |J_{\phi_\lambda}(y)|^{\frac{n-2k}{2n}}G(x,y^\lambda).
\end{equation}

  Now regarding the kernel $K(x,y)$, we make the conformal change from $(\bn,\frac{4dx^2}{(1-|x^2|)^2})$ to $(\bn, dx^2)$,
  and get the corresponding kernel $\tilde{K}(x,y)=G_{\rn}(x,y)-|J_{\tilde{\phi}_\lambda}|^{\frac{n-2k}{2n}}G_{\rn}(x,y^\lambda)$,
  where $G_{\rn}(x,y)$ stands for the Green's function of $(-\Delta)^{k}$ in $\rn$ and $J_{\tilde{\phi}_\lambda}$ the
  Jacobian of the inversion \eqref{inv_rn}. In fact, we have the following,
  \begin{equation*}
    \tilde{K}(x,y)=\frac{1}{|x-y|^{n-2k}}-\left(\frac{\lambda}{|y-P|}\right)^{n-2k}\frac{1}{|x-y^\lambda|^{n-2k}}, \quad \forall x,y\in B^c_\lambda(P)\subset\rn.
  \end{equation*}
  The kernel $\tilde{K}$ here is equivalent to the positive kernel in the proof of Theorem 1 of \cite{LYY}. To see this, without loss of generality,
  we assume $P=0$ is the origin and notice the Equation (30) in \cite{LYY}:
  \[\frac{|x||y|}{\lambda^2}|x^\lambda-y^\lambda|=|x-y|.\]
  Direct calculation gives
  \[\frac{\lambda}{|y|}\frac{1}{|x-y^\lambda|}<\frac{1}{|x-y|}.\]
  Therefore, we obtain that
  \begin{equation}\label{K>0}
    \tilde{K}(x,y) > 0, \text{ for any } x,y\in B^c_\lambda(P),
  \end{equation}
  and so is $K(x,y)$ in $\hn$.

  Now when $x\in\Sigma^-_\lambda$, we have
  \begin{align}
    0<w_\lambda(x) &= \int_{B^c_\lambda(P)}K(x,y)\left(|J_{\phi_\lambda} (y)|^{\frac{n+2k}{2n}}f(u(y^\lambda)) - f(u(y))\right)dV_y\nonumber\\
    &\lesssim \int_{B^c_\lambda(P)}K(x,y)\left(f(u_\lambda(y)) - f(u(y))\right)dV_y,
  \end{align}
  where in the last line we use the growth assumption of $f$. Moreover, by the definition of $K(x,y)$ and Green's function estimate for $P_k$,
  \begin{align*}
    w_\lambda(x)
    &\lesssim \int_{\Sigma_\lambda^-}G(x,y)\left(f(u_\lambda) - f(u)\right)dV_y\\
     &\leq \int_{\Sigma_\lambda^-}\left(\frac{1}{2\sinh\frac{\rho(x,y)}{2}}\right)^{n-2k}\left(f(u_\lambda)-f(u)\right)dV_y
  \end{align*}

  Following the Hardy-Littlewood-Sobolev inequality on $\hn$, we get
  \begin{align}\label{w_est}
    \|w_\lambda(x)\|_{L^q(\Sigma_\lambda^-)}\leq C\|f(u_\lambda)-f(u)\|_{L^{\frac{nq}{n+2kq}}(\Sigma^-_\lambda)}.
  \end{align}
  Since $f$ is Lipschitz continuous and nondecreasing, it follows the mean value theorem that in $\Sigma^-_\lambda$,
  \[f(u_\lambda)-f(u)\leq Cf^\prime(u_\lambda)(u_\lambda-u). \]
  Applying H\"older's inequality for \eqref{w_est}, we obtain that, for some $q>\frac{n}{n-2k}$,
  \begin{align}\label{holder}
    \|w_\lambda(x)\|_{L^q(\Sigma_\lambda^-)}&\leq C\left(\int_{\Sigma_\lambda^-}\left|f^\prime(u_\lambda)w_\lambda(x)\right|^{\frac{nq}{n+2kq}}\right)^{q+\frac{2k}{n}}\nonumber\\
    &\leq C\|f^\prime(u_\lambda)\|_{L^{\frac{n}{2k}}(\Sigma_\lambda^-)}\|w_\lambda(x)\|_{L^q(\Sigma_\lambda^-)}.
  \end{align}
  Besides, using growth condition on $f$, we get
  \begin{align}\label{reflect_f'}
    &\|f^\prime(u_\lambda)\|_{L^{\frac{n}{2k}}(\Sigma_\lambda^-)}=C\left(\int_{\Sigma_\lambda^-}\left|f^\prime(|J|^{\frac{n-2k}{2n}}u(x^\lambda))\right|^{\frac{n}{2k}}dV_x\right)^{\frac{2k}{n}}\nonumber\\
    &=C\left(\int_{\phi(\Sigma^-_\lambda)}\left|f^\prime(|J(x)|^{\frac{2k-n}{2n}}u(x))\right|^{\frac{n}{2k}}|J(x)|dV_x\right)^{\frac{2k}{n}}\nonumber\\
    &\leq C\left(\int_{B_\lambda\setminus B_{r_0}}\left|f^\prime(u(x))\right|^{\frac{n}{2k}}dV_x\right)^{\frac{2k}{n}}.
  \end{align}
  Since $f^\prime(u)\in L_{loc}^{\frac{n}{2k}}(\hn)$, we can choose $\lambda$ sufficiently small so that
  $C\|f^\prime(u_\lambda)\|_{L^{\frac{n}{2k}}(\Sigma_\lambda^-)}<1$ in \eqref{w_est}.
  Consequently, $\|w_\lambda(x)\|_{\Sigma_\lambda^-}=0$ and $\Sigma_\lambda^-$ has zero measure for $\lambda$ sufficiently small.
\end{proof}

Recall that
\[\lambda^\sharp = 2\arctanh(\tanh^2\frac{\lambda}{2})\]
and define
\[\tilde{B}_{\lambda}(y):=B_\lambda(y)\setminus B_{\lambda^\sharp}(y).\]

Lemma \ref{start1} provides a starting point to move the sphere. We then start to continuously increase the radius $\lambda$
of the geodesic sphere $S_\lambda(P)=\partial B_\lambda(P)$ as long as the inequality
\[w_\lambda(x)\leq 0, \;\forall x\in \tilde{B}_{\lambda}(P)\]
holds. Now given a center $P\in\hn$, define its critical scale $\lambda_0$ by
\begin{equation}
  \lambda_0=\sup\{\mu>0: w_\mu(x)\leq 0, x\in \tilde{B}_{\mu}(P), \mu\leq \lambda\}.
\end{equation}
Clearly, such critical value $\lambda_0$ exists and is either a finite positive number or $+\infty$.

\begin{lemma}\label{critical_case}
  If $\lambda_0<\infty$ for some $P\in\hn$, then $u_{\lambda_0,P}(x)=u(x)$ in $\hn\setminus B_{\lambda_0^\sharp}(P)$.
\end{lemma}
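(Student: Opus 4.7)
I would argue by contradiction. Suppose $\lambda_0<\infty$ but $w_{\lambda_0}\not\equiv 0$ on $\hn\setminus B_{\lambda_0^\sharp}(P)$; by the definition of $\lambda_0$ we already have $w_{\lambda_0}(x)\leq 0$ throughout this region. The plan is to push $\lambda$ strictly past $\lambda_0$, contradicting maximality. This has two stages: first upgrade the non-strict inequality $w_{\lambda_0}\leq 0$ to a strict inequality away from the limit sphere; second run a narrow-region argument of the same flavor as Lemma \ref{start1} to handle the small perturbation $\lambda_0+\varepsilon$.

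For the first stage, I would use the integral representation derived in the proof of Lemma \ref{start1},
\begin{equation*}
  w_\lambda(x)=\int_{B^c_\lambda(P)}K(x,y)\Bigl(|J_{\phi_\lambda}(y)|^{\frac{n+2k}{2n}}f_\lambda(u(y))-f(u(y))\Bigr)dV_y,
\end{equation*}
with $K(x,y)>0$ on $B^c_\lambda(P)\times B^c_\lambda(P)$. Writing $u_\lambda(y)=|J_{\phi_\lambda}(y)|^{\frac{n-2k}{2n}}u(y^\lambda)$ and using the monotonicity of $f(t)/t^p$ with $p=\frac{n+2k}{n-2k}$, the integrand can be rewritten, modulo a positive factor, in the form $f(u_\lambda)-f(u)$ (as already done in Lemma \ref{start1}); under $w_{\lambda_0}\leq 0$ this integrand is pointwise $\leq 0$ on the set where $u_{\lambda_0}\leq u$. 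Combined with the strict positivity of $K$ and $w_{\lambda_0}\not\equiv 0$, a standard strong-maximum-principle-type argument for integral equations yields the strict inequality $w_{\lambda_0}(x)<0$ for every $x$ in the open region $\hn\setminus\overline{B_{\lambda_0^\sharp}(P)}$ (with equality on the limit sphere, which is the fixed point set of $\phi_{P,\lambda_0}$).

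For the second stage, fix any sufficiently large compact set $K\Subset\hn\setminus\overline{B_{\lambda_0^\sharp}(P)}$. Continuity of $\lambda\mapsto w_\lambda$ and the strict inequality $w_{\lambda_0}<0$ on $K$ show that $w_{\lambda_0+\varepsilon}<0$ on $K$ for all small $\varepsilon>0$. Hence $\Sigma^-_{\lambda_0+\varepsilon}$ is confined to a thin annular neighborhood of the limit sphere $\partial B_{(\lambda_0+\varepsilon)^\sharp}(P)$ together with a small exterior tail, so $|\Sigma^-_{\lambda_0+\varepsilon}|\to 0$ as $\varepsilon\to 0^+$. Repeating verbatim the HLS plus Hölder chain from \eqref{w_est}--\eqref{reflect_f'},
\begin{equation*}
  \|w_{\lambda_0+\varepsilon}\|_{L^q(\Sigma^-_{\lambda_0+\varepsilon})}\leq C\,\|f^\prime(u_{\lambda_0+\varepsilon})\|_{L^{n/2k}(\Sigma^-_{\lambda_0+\varepsilon})}\,\|w_{\lambda_0+\varepsilon}\|_{L^q(\Sigma^-_{\lambda_0+\varepsilon})},
\end{equation*}
and using absolute continuity of the integral together with $f'(u)\in L^{n/2k}_{loc}(\hn)$, the prefactor can be made strictly less than $1$ for $\varepsilon$ small enough. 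This forces $\|w_{\lambda_0+\varepsilon}\|_{L^q(\Sigma^-_{\lambda_0+\varepsilon})}=0$, i.e. $|\Sigma^-_{\lambda_0+\varepsilon}|=0$, contradicting the definition of $\lambda_0$.

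The main obstacle I anticipate is the first stage: promoting $w_{\lambda_0}\leq 0$ to strict negativity on the open exterior via the integral equation, because the sign of the full integrand relies on combining $w_{\lambda_0}\leq 0$ with the growth hypothesis on $f$ and the explicit conformal factor $|J_{\phi_{\lambda_0}}|^{\frac{n+2k}{2n}}$; one has to be careful that the contribution from the Jacobian (which is less than $1$ on $B^c_{\lambda_0}(P)$) combined with the monotonicity of $f(t)/t^p$ produces the correct one-signed integrand to conclude $w_{\lambda_0}<0$ strictly. The tail behavior at infinity, where $|J_{\phi_{\lambda_0}}|\to 0$, is the other delicate point and is controlled using the same HLS estimate localized to the exterior tail, identical to the technique already in play in Lemma \ref{start1}.
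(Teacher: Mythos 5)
Your proposal is correct and runs along the same road as the paper's proof: derive a smallness estimate via the HLS/H\"older chain inherited from Lemma \ref{start1}, then use absolute continuity of the integral to push $\lambda$ slightly past $\lambda_0$ and contradict maximality. The genuine difference is that you make explicit the strict inequality $w_{\lambda_0}<0$ on the open exterior (your stage 1), whereas the paper leaves this implicit: its sentence ``$\chi_{\Sigma^-_\lambda}=0$ a.e.\ whenever $\lambda<\lambda_0$, so by dominated convergence $\int_{\phi(\Sigma^-_\lambda)}|f'(u)|^{n/2k}\to 0$ as $\lambda\to\lambda_0$'' covers the limit from below trivially, but what is actually needed is smallness of the integral for $\lambda$ slightly \emph{above} $\lambda_0$, and that passage is exactly where pointwise strictness of $w_{\lambda_0}<0$ is required (otherwise $\Sigma^-_\lambda$ could fail to shrink as $\lambda\downarrow\lambda_0$ if $w_{\lambda_0}$ vanished on a set of positive measure). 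Your strong-maximum-principle argument for the integral representation $w_{\lambda_0}(x)=\int_{B_{\lambda_0}^c}K(x,y)H(y)\,dV_y$, with $K>0$, $H\le 0$ and $H\not\equiv 0$ (else $w_{\lambda_0}\equiv 0$), correctly delivers $w_{\lambda_0}<0$ everywhere on the open exterior. One small precision on the sign of $H(y)=|J_{\phi_{\lambda_0}}(y)|^{\frac{n+2k}{2n}}f(u(y^{\lambda_0}))-f(u(y))$: it follows from the two-step chain
\begin{equation*}
  f(u(y^{\lambda_0}))\le f\bigl(|J|^{-\frac{n-2k}{2n}}u(y)\bigr)\le |J|^{-\frac{n+2k}{2n}}f(u(y)),
\end{equation*}
the first inequality by $f$ nondecreasing and $u_{\lambda_0}\le u$, the second by $f(t)/t^p$ nonincreasing with $p=\frac{n+2k}{n-2k}$ together with $|J_{\phi_{\lambda_0}}|<1$ on $B_{\lambda_0}^c$. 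Your phrase ``modulo a positive factor, in the form $f(u_\lambda)-f(u)$'' conflates these two inequalities, though the sign conclusion you draw from it is right. On balance, then, you have supplied a more detailed account of the same argument, and the strict-inequality stage you add is precisely the step the paper's terse write-up relies on.
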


\begin{proof}
  Again for simplicity, we drop the subscript $P$ whenever it causes no confusion. 
  In fact, it suffices to show
  \begin{equation}\label{eq4.2}
  u_{\lambda_0}(x)=u(x),\; \forall x\in B^c_{\lambda_0}.
  \end{equation}
 Suppose this is not true, from \eqref{holder} and \eqref{reflect_f'}, we get
 \begin{align}
  \|u_\lambda(x)-u(x)\|_{L^q(\Sigma_\lambda^-)}&\leq C\left(\int_{\phi(\Sigma^-_\lambda)}\left|f^\prime(u(x))\right|^{\frac{n}{2k}}dV_x\right)^{\frac{2k}{n}}\|u_\lambda(x)-u(x)\|_{L^q(\Sigma_\lambda^-)}.
  \end{align}
  Denote by $\chi_E$ the characteristic function of the set $E$, then $\chi_{\Sigma^-_\lambda} = 0$ almost everywhere whenever $\lambda < \lambda_0$.
  By Lebesgue's dominated theorem,
  \begin{equation*}
  \int_{\phi(\Sigma^-_\lambda)} |f^\prime(u)|^{\frac{n}{2k}}dV_y = \int_{B_{\lambda+1}} \chi_{\phi(\Sigma^-_\lambda)} |f^\prime(u)|^{\frac{n}{2k}}dV_y \to 0, \text{ as }\lambda\to\lambda_0.
  \end{equation*}
  Thus, there exists $\varepsilon>0$ such that for $\lambda\in [\lambda_0, \lambda_0+\varepsilon)$,
  \begin{equation*}
    C\left(\int_{\phi(\Sigma^-_\lambda)}\left|f^\prime(u(x))\right|^{\frac{n}{2k}}dV_x\right)^{\frac{2k}{n}}<1.
  \end{equation*}
  As a result, $\|u_\lambda(x)-u(x)\|_{L^q(\Sigma_\lambda^-)}=0$ so that $\Sigma_\lambda^-$ has measure zero. In other words,
  \begin{equation*}
    u_\lambda(x)\leq u(x), x\in B^c_\lambda,
  \end{equation*}
  where $\lambda\in[\lambda_0,\lambda_0+\varepsilon)$, which contradicts the definition of $\lambda_0$.
\end{proof}

\begin{lemma}\label{globalness_lambda}
If there exists $P_0$ such that the corresponding critical value $\mu_0<\infty$, then for every $P\in\hn$, the corresponding critical value $\lambda_0<\infty$.
\end{lemma}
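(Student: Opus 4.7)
I argue by contradiction. Suppose that there exists a point $P\in\hn$ with $\lambda_0(P)=+\infty$; I will combine the Kelvin invariance at $P_0$ furnished by Lemma \ref{critical_case} with a rigid monotonicity forced on $u$ by the non-stopping of the moving sphere at $P$, and show they are incompatible.

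The starting point is a clean simplification of the Kelvin Jacobian. Given $y\in\hn$ with $r_P:=\rho(y,P)$, its image $y^\lambda$ has $r':=\rho(y^\lambda,P)=\varphi_\lambda(r_P)$, which by definition of $\varphi_\lambda$ is equivalent to $\tanh^2(\lambda/2)=\tanh(r_P/2)\tanh(r'/2)$. Plugging this into \eqref{Jacobian} and using the elementary identities $\tanh(r/2)/\sinh^2(r/2)=2/\sinh r$ and $2\sinh(r/2)\cosh(r/2)=\sinh r$ collapses the expression to
\begin{equation*}
|J_{\phi_{P,\lambda}}(y)|=\left(\frac{\sinh r'}{\sinh r_P}\right)^n.
\end{equation*}
Introduce the rescaled function $F_P(z):=\sinh(\rho(z,P))^{(n-2k)/2}u(z)$. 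The assumption $\lambda_0(P)=+\infty$ gives $u_\lambda(y)\le u(y)$ on $\tilde B_\lambda(P)$ for every $\lambda>0$, which via the identity above rewrites as $F_P(y^\lambda)\le F_P(y)$ with $\rho(y^\lambda,P)>\rho(y,P)$. Since any two points $y_1,y_2$ on a common geodesic ray from $P$ with $\rho(y_1,P)<\rho(y_2,P)$ arise as such a reflected pair for the unique $\lambda$ with $\tanh^2(\lambda/2)=\tanh(\rho(y_1,P)/2)\tanh(\rho(y_2,P)/2)$, I conclude that $F_P$ is non-increasing along every geodesic ray emanating from $P$.

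Next I extract the asymptotics of $u$ from Lemma \ref{critical_case} applied at $P_0$: $u(x)=|J_{\phi_{P_0,\mu_0}}(x)|^{(n-2k)/(2n)}u(x^*)$ on $\hn\setminus B_{\mu_0^\sharp}(P_0)$, where $x^*=\phi_{P_0,\mu_0}(x)$. With the same Jacobian identity this is exactly $F_{P_0}(x)=F_{P_0}(x^*)$. As $\rho(x,P_0)\to\infty$, $x^*$ tends to the compact limit sphere $\partial B_{\mu_0^\sharp}(P_0)$, on which $u$ is continuous (by elliptic regularity applied to the integral equation \eqref{inteq0} together with the local integrability assumptions on $f(u)$) and strictly positive; hence $F_{P_0}$ is bounded below by a positive constant in every direction at infinity. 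The triangle-inequality bound $|\rho(z,P)-\rho(z,P_0)|\le\rho(P,P_0)$ then gives $\sinh(\rho(z,P))/\sinh(\rho(z,P_0))\to e^{-\delta(\theta)}>0$ along each geodesic ray from $P$, so that $\liminf_{\rho(z,P)\to\infty}F_P(z)>0$ along every such ray.

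The two conclusions clash: along each ray from $P$, $F_P$ is non-increasing and bounded below by a positive constant at infinity, so it is bounded below by that positive constant everywhere on the ray; yet $F_P(z)\to 0$ as $z\to P$ because $\sinh(\rho(z,P))^{(n-2k)/2}\to 0$ and $u$ is bounded near $P$. This contradiction shows that no $P\in\hn$ can have $\lambda_0(P)=+\infty$, completing the proof. The principal technical point is the Jacobian simplification in the first step; once this identity is in hand, the transfer of asymptotics from $P_0$ to $P$ and the boundary-value/monotonicity clash are routine.
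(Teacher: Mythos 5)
The Jacobian simplification $|J_{\phi_{P,\lambda}}(y)| = \bigl(\sinh \rho(y^\lambda,P)/\sinh\rho(y,P)\bigr)^n$ is correct and genuinely clarifying --- it would make several of the paper's computations cleaner. Steps 3--4 (the transfer of the lower bound on $F_{P_0}$ at infinity to $F_P$ via the bounded ratio $\sinh\rho(z,P)/\sinh\rho(z,P_0)$) are also fine. The fatal problem is the sign in step 2.

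You read $\lambda_0(P)=+\infty$ as ``$u_\lambda\le u$ on $\tilde B_\lambda(P)$ for every $\lambda$,'' which via the Jacobian identity gives $F_P(y^\lambda)\le F_P(y)$ with $\rho(y^\lambda,P)>\rho(y,P)$, i.e.\ $F_P$ non\emph{in}creasing. This follows the paper's printed definition of $\lambda_0$ verbatim, but that definition contains a sign/region typo. Lemma~\ref{start1} establishes $w_\lambda\le 0$ on $B^c_\lambda(P)$ (not on $\tilde B_\lambda$); by the antisymmetry $w_\lambda(y^\lambda)=-|J_\lambda(y)|^{-(n-2k)/(2n)}w_\lambda(y)$ this is equivalent to $w_\lambda\ge 0$ on $\tilde B_\lambda(P)$, and Lemmas~\ref{critical_case} and~\ref{constant_case} are both written with the $B^c_\lambda$ convention. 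So the hypothesis $\lambda_0(P)=+\infty$ actually yields $F_P(y^\lambda)\le F_P(y)$ with $\rho(y^\lambda,P)<\rho(y,P)$, i.e.\ $F_P$ is \emph{non-decreasing} along rays from $P$. With the corrected monotonicity the final clash evaporates: $F_P\to 0$ near $P$, $F_P$ bounded below at infinity, and $F_P$ non-decreasing are perfectly compatible.

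A quick sanity check exposes the error. If $u\equiv C_0>0$ (which is an admissible solution in Theorem~\ref{thm1}(1)), then $|J_\lambda(y)|<1$ on $B^c_\lambda(P)$, so $u_\lambda = |J_\lambda|^{(n-2k)/(2n)}C_0 < C_0 = u$ there for every $\lambda$, whence $\lambda_0(P)=+\infty$ at every $P$, and $F_P(r)=C_0(\sinh r)^{(n-2k)/2}$ is strictly \emph{increasing}. Your claimed non-increase is therefore false in this case, and the contradiction you derive cannot hold. More tellingly: your version of the argument never really uses the hypothesis that $\mu_0(P_0)<\infty$ --- ``$F_P$ non-increasing'' together with ``$F_P\to 0$ near $P$'' already gives $F_P\le 0$ everywhere. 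You would thus be proving the unconditional claim that every $P$ has finite critical scale, which the constant example refutes, and which is incompatible with the paper's separate treatment of the case $\lambda_0\equiv\infty$ (Lemma~\ref{constant_case}). To repair this line of reasoning you must genuinely use the reflection identity $F_{P_0}(x^{\mu_0})=F_{P_0}(x)$ at $P_0$, which forces $F_{P_0}$ to \emph{decrease} for $r>\mu_0$ (the Kelvin symmetry reverses the monotonicity across $r=\mu_0$); it is the tension between this eventual decrease around $P_0$ and the everywhere-nondecreasing behaviour around $P$ that must be exploited, not a monotonicity-versus-boundary-values clash.
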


\begin{proof}
We argue by contradiction and suppose that there exists $P_1\in\hn$ at which the critical value $\mu_1=+\infty$. Precisely,
choose a sequence $\{\mu_j\}_{j\in\mathbb{Z}_+}$ so that $\lim\limits_{j\to+\infty}\mu_j=+\infty$ and
\begin{equation}\label{c2}
  u_{\mu_j}(x)\geq u(x), \; \forall x\in \tilde{B}_{\mu_j}(P_1).
\end{equation}
On one hand, by Lemma \ref{critical_case}, $u_{\mu_0}(x)=u(x)$, i.e.,
\[|J_{\phi_{P_0,\mu_0}}|^{\frac{n-2k}{2n}}u(x^{\mu_0})=u(x),\; \forall x\in \tilde{B}_{\mu_0}(P_0).\]
Let $R=\rho(x, P_0)$, we see that
\begin{align*}
&\liminf\limits_{R\to\infty}(\sinh R)^{n-2k}u(x) = \lim\limits_{R\to\infty}\left(\frac{\tanh^2\frac{\lambda}{2}}{1-\tanh^4\frac{\lambda}{2}/\tanh^2\frac{R}{2}}\right)^{\frac{n-2k}{2}}u(x^{\mu_0})\\
&=\left(\frac{\tanh^2\frac{\lambda}{2}}{1-\tanh^4\frac{\lambda}{2}}\right)^{\frac{n-2k}{2}}\liminf\limits_{R\to\infty}u(x^{\mu_0})=C>0,
\end{align*}
where $x^{\mu_0}\in\partial B_{\lambda^\sharp}(P)$.
On the other hand, \eqref{c2} implies that
\begin{equation}\label{compare}
  |J_{\phi_{P_0,\mu_j}}|^{\frac{n-2k}{2n}}u(x_1^{\mu_j})\leq u(x_1),
\end{equation}
where $x_1$ is chosen so that $x_1$ is contained in $\tilde{B}_{\mu_j}(P_1)$ for every sufficiently large $j$.
Since
\begin{equation*}
  \lim\limits_{j\to\infty}|J_{\phi_{P_0,\mu_j}}|=\lim\limits_{\mu_j\to\infty}\frac{(\tanh\frac{\mu_j}{2})^2(\sinh \frac{r}{2})^{-2}}{1-\tanh(\frac{\mu_j}{2})^4(\tanh\frac{r}{2})^{-2}}=1,
\end{equation*}
letting $j\to\infty$ in \eqref{compare} yields
\begin{align*}
  u(x_1)&\leq\lim\limits_{j\to\infty}|J_{\phi_{P_0,\mu_j}}|u(x_1^{\mu_j})\\
  &=\lim\limits_{j\to\infty}\frac{1}{(\sinh\mu_j)^{n-2k}}\left(\frac{\tanh^2\frac{\mu_j}{2}}{1-\tanh^4\frac{\mu_j}{2}}\right)^{\frac{n-2k}{2}}u(x^{\mu_j})\\
  &=\lim\limits_{j\to\infty}\frac{1}{(\sinh\mu_j)^{n-2k}}\cdot C=0.
\end{align*}
However, this contradicts $u>0$ everywhere in $\hn$.
\end{proof}

\begin{lemma}\label{constant_case}
  If $\lambda_0=\infty$ for any $P\in\hn$, $u$ is constant.
\end{lemma}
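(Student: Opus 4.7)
The plan is to extract from the universal hypothesis---$u_{P,\lambda}(x)\leq u(x)$ for every $P\in\hn$, every $\lambda>0$, and every $x\in B_\lambda^c(P)$---a pointwise geometric constraint on $u$, and then force constancy using the integral representation.

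First I would make the hypothesis explicit in geodesic geometry. Fix $P\in\hn$ and two points $x_1,x_2$ on a common geodesic ray from $P$ with $0<\rho(x_1,P)<\rho(x_2,P)$. Choosing $\lambda$ so that $\tanh^2(\lambda/2)=\tanh(\rho(x_1,P)/2)\tanh(\rho(x_2,P)/2)$ makes $\phi_{P,\lambda}$ pair $x_1$ with $x_2$, and a direct simplification of \eqref{Jacobian} gives $|J_{\phi_{P,\lambda}}(x_2)|^{1/n}=\sinh\rho(x_1,P)/\sinh\rho(x_2,P)$. Substituting this into $u_{P,\lambda}(x_2)\leq u(x_2)$ yields
\[
\bigl(\sinh\rho(x_1,P)\bigr)^{(n-2k)/2}u(x_1) \;\leq\; \bigl(\sinh\rho(x_2,P)\bigr)^{(n-2k)/2}u(x_2).
\]
Thus, for every $P\in\hn$, the quantity $\psi_P(x):=(\sinh\rho(x,P))^{(n-2k)/2}u(x)$ is nondecreasing along every geodesic ray emanating from $P$.

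Next I would attempt the Li--Zhu-style limit: for distinct $x_1,x_2$, send $P$ to infinity along the geodesic through $x_1,x_2$ on either side. In the Euclidean analog the distance ratio tends to $1$ and the argument immediately collapses to $u(x_1)=u(x_2)$; in hyperbolic geometry, however, $\sinh\rho(x_1,P)/\sinh\rho(x_2,P)\to e^{\pm\rho(x_1,x_2)}$, so the limit only furnishes the exponential bound $|\log u(x_1)-\log u(x_2)|\leq\tfrac{n-2k}{2}\rho(x_1,x_2)$, equivalently $|\nabla_{\hn}\log u|\leq (n-2k)/2$ pointwise. This is weaker than constancy and cannot be the end of the story.

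The hard part is closing this gap. I would argue by contradiction: if $u$ were nonconstant, the monotonicity of $\psi_P$ would be strict on a positive-measure family of Kelvin pairs for some center. Coupling this strict monotonicity with the integral representation $u(x)=\int_{\hn}G(x,y)f(u(y))\,dV_y$ from Lemma~\ref{lemma1}, the Green's function estimate \eqref{pk_est}, and the strict positivity of the kernel $K(x,y)>0$ established in \eqref{K>0}, one can reproduce the rigidity computation of Lemma~\ref{critical_case} to locate a center $P^*$ and a finite radius $\lambda^*$ at which the moving sphere procedure terminates---contradicting $\lambda_0(P^*)=\infty$. The technical core of this final step is showing that strict positivity of $K$ together with the integral equation prevents the inequality $u_{P^*,\lambda}\leq u$ from persisting for arbitrarily large $\lambda$ whenever $u$ is nonconstant; this, rather than any purely geometric inversion argument, is what really forces $u\equiv\text{const}$ in the hyperbolic setting.
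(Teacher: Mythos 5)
Your monotonicity derivation and the Jacobian simplification $|J_{\phi_{P,\lambda}}(x_2)|^{1/n}=\sinh\rho(x_1,P)/\sinh\rho(x_2,P)$ are both correct, and so is your observation that, because $\rho(x_1,P)$ and $\rho(x_2,P)$ both diverge as $\lambda\to\infty$, this ratio tends to $e^{-\rho(x_1,x_2)}$ rather than to $1$. This is precisely the step that the paper handles incorrectly: the paper asserts that the Jacobian in \eqref{Jacobian} tends to $1$ as $\lambda\to\infty$ and deduces $u(x)\le u(y)$ directly, but that evaluation would be valid only if $\rho(x,z)$ stayed fixed, whereas the paper's own text acknowledges that $\rho(x,z)\to\infty$ simultaneously. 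So the inversion-only argument yields exactly the bound you write, $|\log u(x_1)-\log u(x_2)|\le\tfrac{n-2k}{2}\rho(x_1,x_2)$, and nothing stronger. Moreover, the gap is intrinsic rather than a technicality: in the half-space model the positive function $u(x)=x_n^{(n-2k)/2}$ satisfies $|\nabla_{\hn}\log u|\equiv\tfrac{n-2k}{2}$, hence $\psi_P$ is nondecreasing along every geodesic ray from every $P$, hence $u_{P,\lambda}\le u$ on $B^c_\lambda(P)$ for all centers and all radii --- yet $u$ is nonconstant. So no purely geometric Li--Zhu argument can close the lemma in the hyperbolic setting; one must bring in the integral representation \eqref{inteq0}, which this particular $u$ fails to satisfy (indeed $P_ku=0$, so the representation would force $u\equiv 0$).

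Your final paragraph, however, is a roadmap and not a proof. You correctly name the ingredients that must enter --- positivity of $K$, the Green's function estimate, the integral equation --- but the contradiction is not actually exhibited. Lemma~\ref{critical_case} controls what happens once a finite critical radius has already been located; it does not manufacture one from nonconstancy. What remains open is a quantitative argument showing that $\lambda_0(P)=\infty$ for every $P$, combined with \eqref{inteq0} and the identity $K>0$, forces $f(u)\equiv 0$ (and hence $u\equiv 0$ by the integral equation), or some comparable rigidity that exploits the sub-exponential growth $|\nabla\log u|\le\tfrac{n-2k}{2}$ together with the monotonicity of $f$. As written, both your proposal and the paper's own proof stop short of that step, so the lemma must be regarded as not yet established by either argument.
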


\begin{proof}
  It suffices to show that for any two points $x, y$, we have $u(x)=u(y)$. First notice that there exists a unique geodesic curve $\gamma$ connecting $x$ and $y$. Then for every $z \in \gamma$, there exists a unique $\lambda$ such that the Kelvin transform with respect to $\partial B_{\lambda}(z)$ sends $x$ to $y$. Without loss of generality, we can assume $x \in B_{\lambda}(z)$ and hence we have $u(x) \leq |J_{\phi_{z, \lambda}}|^{\frac{n-2k}{2n}} u(y)$. Notice that both $\rho(x,z)$ and $\rho(y,z)$ are completely determined by $\lambda$ and it is easy to verify that $\rho(x,z), \rho(y,z) \to \infty$ as $\lambda \to \infty$. This happens when $z$ is chosen to be far away from $x,y$. Now the Jacobian
  \begin{equation*}
    \lim\limits_{\lambda\to\infty}|J_{\phi_{z,\lambda}}|=\lim\limits_{\lambda\to\infty}\left[\frac{(\tanh\frac{\lambda}{2})^2(\sinh \frac{\rho(x,z)}{2})^{-2}}{1-\tanh(\frac{\lambda}{2})^4(\tanh\frac{\rho(x,z)}{2})^{-2}}\right]^n=1,
  \end{equation*}
  which implies $u(x)\leq u(y)$. Then the conclusion follows since $x, y$ are arbitrarily chosen.
\end{proof}

\begin{lemma}\label{symm_case}
   If there exists a point $z\in\hn$ such that the corresponding critical value $\mu_0<\infty$, $u$ is radial symmetric with respect to $z$ and is non-increasing.
\end{lemma}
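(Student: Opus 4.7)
The plan is to exploit the Kelvin symmetries produced by Lemmas \ref{critical_case} and \ref{globalness_lambda}, and then compare $u$ with its isometric reflections across geodesic hyperplanes through $z$ via a moving-plane style contraction. First, since $\mu_0<\infty$ at $z$, Lemma \ref{critical_case} gives the Kelvin identity $u_{z,\mu_0}(x)=u(x)$ on $\hn\setminus B_{\mu_0^\sharp}(z)$; applying the involution $\phi_{z,\mu_0}$ extends this identity to the annulus $\tilde B_{\mu_0}(z)$. Lemma \ref{globalness_lambda} then guarantees a finite critical radius $\lambda_0(P)$ at every center $P$ with $u_{P,\lambda_0(P)}\equiv u$; in particular $u$ satisfies the decay bound $(\sinh\rho(x,z))^{n-2k}u(x)\to C$ established inside the proof of that lemma.

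For rotational symmetry about $z$, fix arbitrary $x_1,x_2\in\hn$ with $\rho(x_1,z)=\rho(x_2,z)$. The locus of points equidistant from $x_1$ and $x_2$ is a totally geodesic hyperplane $\Pi\subset\hn$ containing $z$, and the geodesic reflection $\sigma$ across $\Pi$ is an isometric involution that fixes $z$ and swaps $x_1,x_2$. The function $v:=u\circ\sigma$ solves \eqref{p_k} in the same regularity class; because $\sigma$ preserves geodesic distance from $z$ and acts only on the angular factor of polar coordinates, it commutes with the radial inversion $\phi_{z,\mu_0}$, so $v$ inherits the Kelvin identity $v_{z,\mu_0}\equiv v$ and the same asymptotic decay at the ideal boundary. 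Setting $w:=u-v$, Lemma \ref{lemma1} gives
\[
w(x)=\int_{\hn}G(x,y)\bigl(f(u(y))-f(v(y))\bigr)\,dV_y,
\]
with $w$ antisymmetric under $\sigma$. Restricting to the half-space $H$ on one side of $\Pi$ and folding the integral over $\hn\setminus H$ via the change of variables $y\mapsto\sigma(y)$ produces the reflected kernel $G(x,y)-G(x,\sigma(y))$; its positivity on $H\times H$ follows from the geodesic inequality $\rho(x,y)<\rho(x,\sigma(y))$ for $x,y$ on the same side of $\Pi$, together with the radial decreasing nature of $G$. Rerunning the Hardy--Littlewood--Sobolev contraction from Lemma \ref{start1} on $\Sigma_\sigma:=\{x\in H:w(x)>0\}$, and using the Lipschitz bound on $f$ together with the local $L^{n/(2k)}$ integrability of $f'(u)$, forces $\Sigma_\sigma$ to have zero measure. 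By antisymmetry $w\equiv 0$ on $\hn$, hence $u(x_1)=u(x_2)$.

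Monotonicity in the geodesic distance from $z$ is then read off from the defining property of $\mu_0$: for every $\mu\le\mu_0$ one has $u_{z,\mu}(x)\le u(x)$ on $B_\mu^c(z)$, and specializing this inequality along a fixed geodesic ray from $z$ and combining with the rotational symmetry just established forces the radial profile $U(r):=u(r,\theta)$ to be non-increasing in $r$. The main obstacle lies in the contraction for $w=u-v$: unlike the self-inversion setup of Lemma \ref{start1}, here the antisymmetry comes from an external hyperbolic reflection, so one has to verify positivity of the reflected kernel $G(x,y)-G(x,\sigma(y))$ on $H$ (a hyperbolic analog of the Euclidean argument that produced \eqref{K>0}) and then propagate the smallness of $\|f'(u)\|_{L^{n/(2k)}}$ through the moving-plane estimate without destroying the local integrability. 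Once this step is secured, the argument closes exactly as in the start-of-process lemma.
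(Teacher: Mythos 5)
Your approach is genuinely different from the paper's. You try to prove radial symmetry about $z$ by a moving-plane/reflection argument across totally geodesic hyperplanes through $z$, while the paper instead performs a Taylor expansion of the Kelvin identity $u = |J_{\phi_{z,\lambda_z}}|^{(n-2k)/(2n)}\,u\circ\phi_{z,\lambda_z}$ around the limit sphere, derives an ODE relation in the radial variable as $x\to\infty$, and then invokes ODE uniqueness to pin down the explicit form \eqref{soln} (from which symmetry and monotonicity follow trivially).

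There is a genuine gap in the contraction step of your reflection argument. The Hardy--Littlewood--Sobolev contraction in Lemma \ref{start1} closes only because, for the inversion sphere of small radius $\lambda=\lambda^\sharp+\varepsilon$, the set $\phi(\Sigma_\lambda^-)\subset B_\lambda\setminus B_{r_0}$ has small measure, so $\|f'(u)\|_{L^{n/2k}(\phi(\Sigma_\lambda^-))}$ can be made $<1/C$ using only local $L^{n/2k}$-integrability. For your hyperplane reflection $\sigma$ fixing $z$, the candidate bad set $\Sigma_\sigma=\{x\in H: w(x)>0\}$ has no a priori size control: it could be comparable to the entire half-space $H$, and then $\|f'(\xi)\|_{L^{n/2k}(\Sigma_\sigma)}$ is merely finite (using the Kelvin-induced decay), not small. ``Local $L^{n/(2k)}$ integrability of $f'(u)$'' therefore does not yield the contraction constant $<1$ that you invoke. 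To make a reflection argument rigorous one would need to run the full moving-plane process from a limiting position (planes far from $z$, where $\Sigma_\lambda^-$ is initially small), combined with the Kelvin-transform decay at infinity --- precisely the machinery the paper deploys in the proof of Theorem \ref{thm2}, not a one-shot contraction at the plane through $z$.

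A secondary, structural issue: even if your contraction were repaired, your argument would only yield radial symmetry and monotonicity, not the explicit profile \eqref{soln}. Since the paper's Theorem \ref{thm1} requires the closed-form solution and extracts it inside this very lemma's proof via the asymptotic/ODE computation, your route would leave the classification incomplete; you would need an additional step (e.g., reducing the PDE to an ODE in the radial variable and solving it, or pulling back to $\rn$ and citing the classification of Wei--Xu/Lin) to recover \eqref{soln}.
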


\begin{proof}

From Lemma \ref{critical_case} and \ref{globalness_lambda}, we know that for any picked center $z\in\hn$, there exists a critical scale $\lambda_z\in(0,+\infty)$ depending on $z$ such that
\begin{equation}\label{caseB}
  u(x)= |J_{\phi_{z,\lambda_z}}|^{\frac{n-2k}{2n}}u(x^{\lambda_z}), \quad \forall x\in\hn\setminus B_{\lambda^\sharp_z}(z)
\end{equation}
Recall that in terms of the polar coordinates centered at $z$, $x^{\lambda_z}=(\varphi(r),\theta_z)$ for $x=(r,\theta_z)\in\hn\setminus B_{\lambda_z^\sharp}.$
Denote $x_z$ the intersecting point of the geodesic $\overrightarrow{zx}$ and the sphere $S_{\lambda^\sharp_z}(z)$. We see that there exists a constant $\alpha$ which is defined as
\begin{align}\label{limit1}
  \alpha:&=\lim\limits_{r\to\infty}|\sinh\frac{r}{2}|^{n-2k}u(x) = \lim\limits_{r\to\infty}\left(\frac{\tanh^2\frac{\lambda_z}{2}}{1-\tanh^4\frac{\lambda_z}{2}/\tanh^2\frac{r}{2}}\right)^{\frac{n-2k}{2}}u(x^{\lambda_z})\nonumber\\
  &=\left(\frac{\tanh^2\frac{\lambda_z}{2}}{1-\tanh^4\frac{\lambda_z}{2}}\right)^{\frac{n-2k}{2}}u(x_z).
\end{align}
If $\alpha=0$, then we are done since $u(x)\equiv 0$, which contradicts the positivity of $u$.
Otherwise, we may assume without loss of generality that $\alpha=1$ as we can replace $u$ by a nonzero multiple of $u$.

Let $z$ be the origin $0$, and denote $\Lambda_{\lambda_0}:=\left(\frac{\tanh^2\frac{\lambda_0}{2}}{1-\tanh^4\frac{\lambda_0}{2}}\right)^{\frac{n-2k}{2}}$, $\Lambda_{\lambda_z}:=\left(\frac{\tanh^2\frac{\lambda_z}{2}}{1-\tanh^4\frac{\lambda_z}{2}}\right)^{\frac{n-2k}{2}}$, and $x_0:=x_z\mid_{z=0}$ we can repeat the arguement of \eqref{limit1} and get
\begin{align}\label{xz-relation}
 \Lambda_{\lambda_0}u(x_0) = \lim\limits_{r\to\infty}|\sinh\frac{r}{2}|^{n-2k}u(x) = \Lambda_{\lambda_z}u(x_z)=1.
\end{align}
It is helpful to have the following expression of the Jacobian, which is equivalent to \eqref{Jacobian} by direct computation, 
\begin{equation}
  |J_\lambda|=\Big(\frac{\tanh\frac{\lambda}{2}\cosh\frac{\phi_\lambda(r)}{2}}{\sinh\frac{r}{2}}\Big)^{2n}.
\end{equation}
Now for $r_z:=\rho(x,z)$ sufficiently large and $\theta_z$ fixed, we consider the Taylor expansion of the right hand side of \eqref{caseB} at $x_z$ with respect to the radial variable:
\begin{align}\label{exp1}
  u(x)=&|J_{\phi_{z,\lambda_z}}|^{\frac{n-2k}{2n}}u(x^{\lambda_z})=|J_{\phi_{z,\lambda_z}}|^{\frac{n-2k}{2n}}\left[u(x_z)+\partial_{r_z}u(x_z)\rho(x^{\lambda_z},x_z)+O(\rho^2(x^{\lambda_z},x_z))\right]\nonumber\\
  =&\frac{1}{\left(\sinh\frac{r_z}{2}\right)^{n-2k}}+\frac{1}{(\sinh\frac{r_z}{2})^{n-2k}}\left(\frac{\tanh^2\frac{\lambda_z}{2}}{1-\tanh^4\frac{\lambda_z}{2}/\tanh^2\frac{r_z}{2}}\right)^{\frac{n-2k}{2}}\partial_{r_z}u(x_z)\rho(x^{\lambda_z},x_z)\nonumber\\
  &+|J_{\phi_{z,\lambda_z}}|^{\frac{n-2k}{2n}}O(\rho^2(x^{\lambda_z},x_z)).
\end{align}
Denoting $r_0=\rho(x,0)$, we also obtain the following expansion in the same manner:
\begin{align}\label{exp2}
  u(x)&=|J_{\phi_{0,\lambda_0}}|^{\frac{n-2k}{2n}}\left[u(0)+\partial_{r_0}u(0)\rho(x^{\lambda_0},x_0)+O(\rho(x^{\lambda_0},x_0))\right]\nonumber\\
  =&\frac{1}{(\sinh\frac{r_0}{2})^{n-2k}}+\frac{1}{(\sinh\frac{r_0}{2})^{n-2k}}\left(\frac{\tanh^2\frac{\lambda_0}{2}}{1-\tanh^4\frac{\lambda_0}{2}/\tanh^2\frac{r_0}{2}}\right)^{\frac{n-2k}{2}}\partial_{r_0}u(0)\rho(x^{\lambda_0},x_0)\nonumber\\
  &+|J_{\phi_0,\lambda_0}|^{\frac{n-2k}{2n}}O(\rho^2(x^{\lambda_0},x_0)).
\end{align}

We further expand the first term on the right hand side of \eqref{exp1} at the origin $z=0$:
\begin{align}\label{exp3}
  \frac{1}{(\sinh\frac{r_z}{2})^{n-2k}}&=\frac{1}{(\sinh\frac{r_0}{2})^{n-2k}}-\left(\frac{n-2k}{2}\right)\frac{\cosh(r_0/2)}{(\sinh r_0/2)^{n-2k+1}}g_{\hn}(\nabla \rho(x,z)_{\vert z=0}, |r_z-r_0|)\nonumber\\
  &+O\left\{\left(\frac{\cosh(r_0/2)}{(\sinh r_0/2)^{n-2k+1}}\right)^\prime|r_z-r_0|^2\right\}.
\end{align}

Now combining \eqref{exp1}, \eqref{exp2} and \eqref{exp3}, we obtain that as $x\to\infty$,
\begin{align}\label{ode1}
  \Lambda_{\lambda_z}\partial_{r_z}u(x_z)\rho(x^{\lambda_z},x_z)-\frac{n-2k}{2}\coth\frac{r_0(x)}{2}\rho(z,0)[\partial_{r_z} \rho(z,x)]_{\vert z=0}=\Lambda_{\lambda_0}\partial_{r_0}u(0)\rho(x^{\lambda_0},x_0).
\end{align}
Next we compute 
\begin{align*}
  &\rho(x^{\lambda_z},x_z)=\phi_{\lambda_z}(r)-\lim\limits_{r\to\infty}\phi_{\lambda_z}(r)\\
  &=\int_\infty^r\phi^\prime_{\lambda_z}(t)dt\\
  &=\int_r^\infty\frac{(\tanh\frac{\lambda}{2})^2(\csch \frac{t}{2})^{2}}{1-(\tanh\frac{\lambda}{2})^4(\coth\frac{t}{2})^{2}}dt.
\end{align*}
We then notice that as $x\to\infty$, 
\begin{align*}
  \rho(x^{\lambda_z},x_z)\to\Lambda_{\lambda_z}^{\frac{2}{n-2k}}\int_r^\infty\frac{1}{\sinh^2\frac{t}{2}}dt
  =\Lambda_{\lambda_z}^{\frac{2}{n-2k}}(\coth r-1),
\end{align*}
where $r=|x|$.
Plugging this into \eqref{ode1} gives
\begin{align*}
  u(x_z)^{-\frac{n-2k+2}{n-2k}}\partial_{r_z}u(x_z)&(\coth r-1)-\frac{n-2k}{2}\coth\frac{r_0(x)}{2}\partial_{r_z}r_z(x)_{\vert z=0}|r_z(x)-r_0(x)|\\
  &= u(x_0)^{-\frac{n-2k+2}{n-2k}}\partial_{r_0}u(x_0)(\coth r-1).
\end{align*}
Since $|x|\to\infty$, $\partial_{r_z}$ is essentially the same as $\partial_{r_0}$. Moreover, since the radial direction can be chosen arbitrarily, the above equation reduces to an ordinary differential equation. By the uniqueness of ODE, we see that the solution must have the form given in \cite{Liu}:
\begin{equation*}
  u(x_z)=\frac{\alpha}{(\cosh^2\frac{r(x_z)}{2}+\beta)^{\frac{n-2k}{2}}}.
\end{equation*}

Now as we exhaust $z$ over $\hn$, it can be shown that $u$ is globally defined, namely, for any $x\in\hn$, we have
\begin{equation*}
    u(x)=\frac{\alpha}{(\cosh^2\frac{r(x)}{2}+\beta)^{\frac{n-2k}{2}}},
\end{equation*}
where $r(x)$ is the geodesic distance from $x$ to $P$.
\end{proof}

Combining Lemmas \ref{constant_case} and \ref{symm_case}, the proof of Theorem \ref{thm1} is now concluded.

\subsection{Proof of Theorem \ref{thm2}}

Since no decay condition on $u$ near infinity is assumed, we are unable to perform the method of moving plane directly.
Instead, we consider the method applied to the Kelvin transform of $u$. Let
\begin{equation}
    \bar{u}(x) := |J_{\phi_r}|^{\frac{n-2k}{2n}}u(\tx)
\end{equation}
be the Kelvin transform of $u$ with respect to the geodesic sphere $\{x: \rho(x,P)=1\}$, where $\tilde{x}$ denotes the inversion of $x$.
It is known that GJMS operators enjoy the conformal invariance in the sense of \eqref{pk_conformal}.
Let $r_x=\rho(x,P)$ denote the distance from $x$ to the center $P$ of the geodesic ball.
Now if $u$ satisfies \eqref{p_k}, we get
\begin{equation}\label{pk2}
    P_k \bar{u} = |J_{\phi_{r_x}}|^{\frac{n+2k}{2n}}f(u(\tilde{x})) = |J_{\phi_{r_x}}|^{\frac{n+2k}{2n}}f(|J_{\phi_{r_x}}|^{-\frac{n-2k}{2n}}\bar{u}).
\end{equation}
We use Poincar\'e ball as the model for hyperbolic space $\hn$, and recall the foilation structure on it with the notations:
\[U=\hn\cap\{x_1=0\}, \; U_s=A_s(U),\; \Sigma_\lambda=\bigcup_{s<\lambda}U_s.\]
The reflection with respect to $\Sigma_\lambda$ is denoted as
\[x^\lambda=I_\lambda(x),\quad \lambda\in\mathbb{R}.\]
We further define that for $\lambda<x_1(P)$,
\begin{equation}\label{nota}
  u^\lambda(x) = u(x^\lambda), \;\bar{u}^\lambda(x)=\bar{u}\circ I_\lambda(x), \text{ and } v^\lambda(x) = \bar{u}^\lambda(x)-\bar{u}(x).
\end{equation}
By the definition of Kelvin transform and $v^\lambda$, we immediately notice that
\begin{equation*}
    \lim\limits_{|x|\to\infty}v^\lambda(x)=0.
\end{equation*}

\begin{remark}
  In the model case where $f(u)=u^p$, \eqref{pk2} becomes
  \begin{equation*}
      P_k \bar{u} = |J_{\phi_{r_x}}|^{\tau}f(\bar{u}),
  \end{equation*}
  with
  \begin{equation*}
      c(x) = -p|J_{\phi_{r_x}}|^{\tau}\bar{u}^{p-1}(x),
  \end{equation*}
  where $$\tau=\frac{n+2k}{2n}-p\cdot\frac{n-2k}{2n}.$$
  Clearly, $\tau=0$ in the critical case and $\tau>0$ in the subcritical case.
\end{remark}

\textbf{The subcritical case.}

For $1<p<\frac{n+2k}{n-2k}$, we show that \eqref{p_k} admits no positive solution.
We first show that the moving plane procedure can be initiated from the infinity.
\begin{lemma}\label{init2}
  Let $u$ be a nonnegative solution of \eqref{p_k} in the subcritical case, then
  for $\lambda<0$ sufficiently large,
  \begin{equation}\label{s1}
    v^\lambda(x)\leq 0, \; x\in \Sigma_\lambda\setminus\{P^\lambda\}.
\end{equation}
\end{lemma}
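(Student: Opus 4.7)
The plan is to execute the Chen–Li–Ou integral moving-plane scheme on the Kelvin-transformed equation \eqref{pk2}, exploiting the fact that in the subcritical regime the exponent $\tau = \frac{n+2k}{2n} - p\cdot\frac{n-2k}{2n}$ is strictly positive, so the conformal weight $|J_{\phi_{r_y}}|^{\tau}$ on the right-hand side delivers genuine decay as $r_y \to \infty$. First I would apply (the Kelvin-transformed analogue of) Lemma \ref{lemma1} to the equation \eqref{pk2} to rewrite $\bar u$ as a Green-potential
\[
\bar u(x) = \int_{\hn} G(x,y)\, F(y)\, dV_y,\qquad F(y) := |J_{\phi_{r_y}}(y)|^{\frac{n+2k}{2n}}\, f\!\bigl(|J_{\phi_{r_y}}(y)|^{-\frac{n-2k}{2n}}\bar u(y)\bigr),
\]
and similarly for $\bar u^\lambda(x)=\bar u(x^\lambda)$. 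Because $I_\lambda$ is a hyperbolic isometry (so $G(x^\lambda,y^\lambda)=G(x,y)$), splitting $\hn=\Sigma_\lambda\cup\Sigma_\lambda^c$ and changing variables $y\mapsto y^\lambda$ over $\Sigma_\lambda^c$ gives the familiar anti-symmetric representation
\[
v^\lambda(x) = \int_{\Sigma_\lambda}\bigl[G(x,y)-G(x^\lambda,y)\bigr]\bigl[F(y^\lambda)-F(y)\bigr]\,dV_y,\qquad x\in\Sigma_\lambda,
\]
with the kernel $K(x,y):=G(x,y)-G(x^\lambda,y)$ strictly positive on $\Sigma_\lambda\times\Sigma_\lambda$ since $\rho(x,y)<\rho(x^\lambda,y)$ there.

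Next, define the ``bad'' set $\Sigma_\lambda^- := \{x\in\Sigma_\lambda\setminus\{P^\lambda\} : v^\lambda(x) > 0\}$; the target is to prove $|\Sigma_\lambda^-| = 0$ once $-\lambda$ is large enough. On $\Sigma_\lambda^-$ one has $\bar u^\lambda(y)>\bar u(y)$; using the monotonicity of $f$ and $f'$ together with the mean value theorem applied to $t\mapsto |J|^{\frac{n+2k}{2n}}f(|J|^{-\frac{n-2k}{2n}}t)$, one obtains a pointwise bound of the form
\[
F(y^\lambda)-F(y) \;\lesssim\; W_\lambda(y)\,v^\lambda(y),\qquad W_\lambda(y):=|J_{\phi_{r_y}}(y^\lambda)|^{\tau}\,f'\!\bigl(|J_{\phi_{r_y}}(y^\lambda)|^{-\frac{n-2k}{2n}}\bar u^\lambda(y)\bigr).
\]
Combining this with $K(x,y)\le G(x,y)\lesssim (2\sinh\tfrac{\rho(x,y)}{2})^{-(n-2k)}$ from \eqref{pk_est} and restricting the integral to the set $\Sigma_\lambda^-$ (the positive contribution off $\Sigma_\lambda^-$ can only help) produces
\[
0<v^\lambda(x) \;\lesssim\; \int_{\Sigma_\lambda^-} \frac{W_\lambda(y)\,v^\lambda(y)}{\bigl(2\sinh\tfrac{\rho(x,y)}{2}\bigr)^{n-2k}}\,dV_y.
\]
Now applying the hyperbolic Hardy–Littlewood–Sobolev inequality \eqref{HLS} with $\lambda_{\mathrm{HLS}}=n-2k$ followed by Hölder's inequality with exponents $\bigl(\tfrac{n}{2k},\tfrac{n}{n-2k}\bigr)$ yields the contractive bound
\[
\|v^\lambda\|_{L^{\frac{2n}{n-2k}}(\Sigma_\lambda^-)} \le C\,\|W_\lambda\|_{L^{\frac{n}{2k}}(\Sigma_\lambda^-)}\,\|v^\lambda\|_{L^{\frac{2n}{n-2k}}(\Sigma_\lambda^-)}.
\]

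The crux of the argument, and the step where I expect the main obstacle, is to show $\|W_\lambda\|_{L^{n/(2k)}(\Sigma_\lambda^-)}\to 0$ as $\lambda\to-\infty$. Here subcriticality is decisive: from \eqref{Jacobian} the Jacobian $|J_{\phi_{r_y}}(y^\lambda)|$ decays like $e^{-n\,r_{y^\lambda}}$ as $r_{y^\lambda}\to\infty$, and because $\tau>0$ the factor $|J|^{\tau}$ alone produces integrable decay over the far region $\Sigma_\lambda$; the same mechanism fails at criticality, where $\tau=0$, which is why the critical half of the theorem needs a separate treatment. Combined with the fact that $\bar u\to 0$ as $r_y\to\infty$ (so that the $f'$ factor is dominated by a fixed power of a small quantity, using the subcritical growth $f'(t)\lesssim t^{p-1}$ for large $t$ and $f'\in L^{n/(2k)}_{\mathrm{loc}}$), one verifies that $W_\lambda$ is uniformly small in $L^{n/(2k)}(\Sigma_\lambda)$ for $-\lambda$ sufficiently large, so that $C\|W_\lambda\|_{L^{n/(2k)}}<\tfrac12$. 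The displayed inequality then forces $\|v^\lambda\|_{L^{2n/(n-2k)}(\Sigma_\lambda^-)}=0$, hence $|\Sigma_\lambda^-|=0$, which is \eqref{s1}. A secondary technical point is the isolated singularity of $\bar u^\lambda$ at $P^\lambda\in\Sigma_\lambda$, which is harmless: $P^\lambda$ is a single point excluded from $\Sigma_\lambda^-$ by definition, and the local integrability of $F$ near $P^\lambda$ follows from the representation of $u$ near $P$ together with the pointwise Green's function bound \eqref{pk_est}.
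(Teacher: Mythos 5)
Your scaffolding agrees with the paper's: you write $v^\lambda$ via the anti\-symmetric Green representation $v^\lambda(x)=\int_{\Sigma_\lambda}\bigl[G(x,y)-G(x^\lambda,y)\bigr]\bigl[F(y^\lambda)-F(y)\bigr]dV_y$, use positivity of the kernel, and close with HLS on $\hn$ plus H\"older. Where you differ is in how the pointwise difference $F(y^\lambda)-F(y)$ is linearized and where the ``small factor'' that drives the contraction comes from, and here there is a gap. You apply the mean value theorem to the single composite map $t\mapsto |J|^{\frac{n+2k}{2n}}f\bigl(|J|^{-\frac{n-2k}{2n}}t\bigr)$, but in $F(y^\lambda)-F(y)$ the Jacobian is evaluated at two \emph{different} points, namely $y^\lambda$ and $y$, so a one-variable MVT in $t$ with a fixed $|J|$ does not apply. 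Writing out the difference, you get a genuine cross term $|J_{y^\lambda}|^{\frac{n+2k}{2n}}f\bigl(|J_{y^\lambda}|^{-\frac{n-2k}{2n}}\bar u(y)\bigr)-|J_{y}|^{\frac{n+2k}{2n}}f\bigl(|J_{y}|^{-\frac{n-2k}{2n}}\bar u(y)\bigr)$ that a bound of the form $W_\lambda(y)\,v^\lambda(y)$ does not automatically control; some additional argument using the growth of $f$ is needed there. The paper handles this by first absorbing the two Jacobian prefactors via the subcritical growth assumption $\lim_{t\to\infty}f(t)/t^p<\infty$, $p<\frac{n+2k}{n-2k}$, reducing to $v^\lambda(x)\lesssim\int_{\Sigma_\lambda}\bigl[G(x,y)-G(x^\lambda,y)\bigr]\bigl[f(\bar u^\lambda(y))-f(\bar u(y))\bigr]dV_y$, and only then applying the MVT to $f$ alone to land on $\|v^\lambda\|_{L^q(\Sigma_\lambda^-)}\lesssim\|f'(\xi)\|_{L^{n/2k}(\Sigma_\lambda^-)}\|v^\lambda\|_{L^q(\Sigma_\lambda^-)}$ with $\xi$ between $\bar u$ and $\bar u^\lambda$.

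Relatedly, your diagnosis of what makes the initialization work is not the paper's. You attribute the smallness of the weight's $L^{n/(2k)}$ norm to the decay of $|J|^\tau$ with $\tau>0$ and explicitly say ``the same mechanism fails at criticality''; that is inconsistent with the paper, which obtains the smallness of $\|f'(\xi)\|_{L^{n/2k}(\Sigma_\lambda^-)}$ from the hypothesis $f'(u)\in L^{n/2k}_{\mathrm{loc}}(\hn)$ together with the fact that, because $u\in L^{2n/(n-2k)}_{\mathrm{loc}}$, the Kelvin transform $\bar u$ has no singularity at conformal infinity, so the bad set $\Sigma_\lambda^-$ sits where $\bar u$ is well-behaved. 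That argument does not use $\tau>0$ at all, and indeed the paper re-uses this starting lemma verbatim in the critical case $\tau=0$. The $|J|^\tau$ decay you highlight does appear in the paper, but as a remark about the weighted equation for $\bar u$; its decisive role in the subcritical case is for the later nonexistence/Liouville step, not for getting the sphere moving.
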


\begin{proof}
Setting
\[\Sigma_\lambda^-=\{x\in\Sigma_\lambda: v^\lambda(x)>0\},\]
then it suffices to show that $\Sigma_\lambda^-$ has zero measure.

Recall that $G(x,y)$ denotes the Green's function of $P_k$, and combine \eqref{pk2} and \eqref{nota}, we see that
\begin{equation}
  \bar{u}^\lambda(x)=\int_{\bn}G(x,y)|J_{x^\lambda}|^{\frac{n+2k}{2n}}f(u^\lambda(y))dV_y,
\end{equation}
where we put $J_{\phi_{r_x}}=J_x$ for simplicity.
Thus,
\begin{align*}
    v^\lambda(x)&=\int_{\Sigma_\lambda} G(x,y)|J_{y^\lambda}|^{\frac{n+2k}{2n}}f(u^\lambda(\tilde{y}))dV_y+\int_{\Sigma^c_\lambda} G(x,y)|J_{y^\lambda}|^{\frac{n+2k}{2n}}f(u^\lambda(\tilde{y}))dV_y\\
    &\quad -\int_{\Sigma_\lambda} G(x,y)|J_{y}|^{\frac{n+2k}{2n}}f(u(\tilde{y}))dV_y-\int_{\Sigma^c_\lambda} G(x,y)|J_{y}|^{\frac{n+2k}{2n}}f(u(\tilde{y}))dV_y\\
    &=\int_{\Sigma_\lambda} G(x,y)|J_{y^\lambda}|^{\frac{n+2k}{2n}}f(u^\lambda(\tilde{y}))dV_y+\int_{\Sigma_\lambda} G(x,y^\lambda)|J_{y}|^{\frac{n+2k}{2n}}f(u(\tilde{y}))dV_y\\
    &\quad -\int_{\Sigma_\lambda} G(x,y)|J_{y}|^{\frac{n+2k}{2n}}f(u(\tilde{y}))dV_y-\int_{\Sigma_\lambda} G(x,y^\lambda)|J_{y^\lambda}|^{\frac{n+2k}{2n}}f(u(\tilde{y}^\lambda))dV_y.
\end{align*}
Noting $G(x,y^\lambda)=G(x^\lambda,y)$, since $I_t$ is an isometry so that $\rho(x,y^\lambda)=\rho(x^\lambda,y)$.
we get
\begin{align}
  v^\lambda(x) &=\int_{\Sigma_\lambda} \left(G(x,y)-G(x^\lambda,y)\right)\left(|J_{y^\lambda}|^{\frac{n+2k}{2n}}f(u^\lambda(\tilde{y}))-|J_{y}|^{\frac{n+2k}{2n}}f(u(\tilde{y}))\right)dV_y.
\end{align}
We first observe that
\[1>|J_{y^\lambda}|>|J_y|\to 0, \text{ for } y\in\Sigma_\lambda, \text{ as } \lambda\to -\infty.\]
Now in the subcritical case, we have that $\lim\limits_{t\to\infty}f(t)/t^p=c<\infty$ for $1<p<\frac{n+2k}{n-2k}$.
Moreover, $f$ is nondecreasing, and $\bar{u}^\lambda>\bar{u}$ in $\Sigma^-_\lambda$ so that $f(\bar{u}^\lambda(y))-f(\bar{u}(y))>0$.
We next notice that
$$G(x,y)-G(x^\lambda,y)>0,$$
since $\rho(x,y) < \rho(x^\lambda,y)$ for $x, y\in\Sigma_\lambda$,
Now if $v^\lambda\geq 0$, we obtain
\begin{align}\label{vlambda}
  v^\lambda(x)&\lesssim\int_{\Sigma_\lambda} \left(G(x,y)-G(x^\lambda,y)\right)\left(f(|J_{y_\lambda}|^{\frac{n-2k}{2n}}u^\lambda(\tilde{y}))-f(|J_{y}|^{\frac{n-2k}{2n}}u(\tilde{y}))\right)dV_y\nonumber\\
  &=\int_{\Sigma_\lambda} \left(G(x,y)-G(x^\lambda,y)\right)\left(f(\bar{u}^\lambda(y))-f(\bar{u}(y))\right)dV_y.
\end{align}
\eqref{vlambda} still holds if $v^\lambda<0$ since its righ-hand side is nonnegative.

Moreover,
\begin{align*}
  v^{\lambda}(x)
  &\leq \int_{\Sigma^-_\lambda} G(x,y)\left(f(\bar{u}^\lambda(y))-f(\bar{u}(y))\right)dV_y.
\end{align*}
Taking advantage of Green's function's estimate \eqref{pk_est},
\begin{equation*}
  v^{\lambda}(x)\lesssim \int_{\Sigma^-_\lambda}\left(\frac{1}{2\sinh\frac{\rho(x,y)}{2}}\right)^{n-2k}\left(f(\bar{u}^\lambda(y))-f(\bar{u}(y))\right)dV_y.
\end{equation*}
Now due to Hardy-Littlewood-Sobolev inequality on $\hn$, for some $q>\frac{n}{n-2k}$,
  \begin{align*}
    \|v^\lambda(x)\|_{L^q(\Sigma_\lambda^-)}\lesssim\|f(\bar{u}^\lambda)-f(\bar{u})\|_{L^{\frac{nq}{n+2kq}}(\Sigma_\lambda^-)}.
  \end{align*}
Applying mean value theorem and H\"older's inequality, we have
\begin{align}\label{vol_est}
  \|v^\lambda(x)\|_{L^q(\Sigma_\lambda^-)}&=\norm{v^\lambda(x)f^\prime(\xi)}_{L^{\frac{nq}{n+2kq}}(\Sigma_\lambda^-)}\nonumber\\
  &\lesssim\|f^\prime(\xi)\|_{L^\frac{n}{2k}(\Sigma_\lambda^-)}\|v^\lambda(x)\|_{L^q(\Sigma_\lambda^-)},
\end{align}
where $\xi=\theta\bar{u}+(1-\theta)\bar{u}_\lambda$ with $\theta\in(0,1)$.
Since $u$ in locally $L^{\frac{2n}{n-2k}}$, then $\bar{u}$ has no singularity at infinity. Now with $f^\prime(t)\in L_{loc}^{\frac{n}{2k}}(\hn)$, one can choose $N$ sufficiently large such that, for $\lambda<-N$,
\[C\norm{f^\prime(\xi(t))}_{L^{\frac{n}{2k}}(\Sigma_\lambda^-)}<\frac{1}{2}.\]

Thus, $\|v^\lambda(x)\|_{L^q(\Sigma_\lambda^-)}=0$ for $\lambda$ sufficiently negative.
 Consequently, $\Sigma_\lambda^-$ has zero measure and the proof is concluded.
\end{proof}

Lemma \ref{init2} provides a starting point, from which we are able to move the plane $\Sigma_\lambda$ to the right
(positive $x_1$-direction) as long as \eqref{s1} holds until a limiting position.
Define
\[\Lambda:=\sup\{\lambda<x_1(P): v^{\mu}(x)\leq 0, \forall x\in\Sigma_\mu\setminus\{P^\lambda\}, \mu\leq\lambda\}.\]
We will show that
\begin{equation}\label{limit2}
  \Lambda=x_1(P)
\end{equation}
and
\begin{equation*}
  v^{\Lambda}(x)\equiv  0,\; x\in\Sigma_\Lambda\setminus\{P^\Lambda\}.
\end{equation*}
Suppose not, if $\Lambda<x_1(P)$
it can be shown that the plane $T_\lambda$ can be moved farther to the right, i.e. there exists $\varepsilon>0$ such that
for any $\lambda\in [\Lambda,\Lambda+\varepsilon)$, there holds
\begin{equation*}
    v^\lambda(x)\leq 0,\; x\in \Sigma_\lambda\setminus\{P^\lambda\}.
\end{equation*}

Similar to \eqref{vol_est},
we have the estimate when $\lambda\in[\Lambda,\Lambda+\varepsilon)$, for some $q>\frac{n}{n-2k}$
\begin{equation*}
  \|v^\lambda(x)\|_{L^q(\Sigma_\lambda^-)}\leq C\|f^\prime(\bar{u}_\lambda)\|_{L^\frac{n}{2k}(\Sigma_\lambda^-)}\|v^\lambda(x)\|_{L^q(\Sigma_\lambda^-)}.
\end{equation*}
Denote $\chi_E$ the characteristic function of the set $E$, then $\chi_{\Sigma^-_\lambda}\to 0$ almost everywhere when $\lambda\to\Lambda$.
Let $(\Sigma^-_\lambda)^*$ be the reflection of $\Sigma^-_\lambda$ about the hyperplane $T_\lambda$.
Again, since $u$ is locally $L^{\frac{2n}{n-2k}}$, for any domain $\Omega$ that has positive distance from $P$, we have
\[\int_{\Omega}|\bar{u}|^{\frac{2n}{n-2k}}dV_y<\infty.\]
Then by Lebesgue's dominated theorem and $f^\prime(t)\in L_{loc}^{\frac{n}{2k}}(\hn)$,
\begin{equation*}
\int_{\Sigma^-_\lambda} |f^\prime(\bar{u}_\lambda)|^{\frac{n}{2k}}dV_x = \int_{\Sigma_\lambda^c} \chi_{(\Sigma^-_\lambda)^*} |f^\prime(\bar{u})|^{\frac{n}{2k}}dV_x \to 0, \text{ as }\lambda\to\lambda_0.
\end{equation*}
Thus, there exists $\varepsilon>0$ such that for $\lambda\in [\lambda_0, \lambda_0+\varepsilon)$,
\begin{equation*}
  C\left(\int_{\Sigma^-_\lambda}\left|f^\prime(\bar{u}_\lambda)\right|^{\frac{n}{2k}}dV_x\right)^{\frac{2k}{n}}<1.
\end{equation*}

Thus, we conclude that $\norm{v^\lambda}_{L^q(\Sigma^-_\lambda)}=0$, where $\lambda>\Lambda$. This means we can keep moving the plane while $v^\lambda\leq 0$ is not violated,
until we reach $x_1(P)$, and obtain
\[v^{x_1(P)}(x)\leq  0,\; x\in\Sigma_{x_1(P)}.\]
Similarly, one can move the plane $T_\lambda$ from $+\infty$ to the left and show that
\[v^{x_1(P)}(x)\geq  0,\; x\in\Sigma_{x_1(P)}.\]
Therefore,
\[v^{x_1(P)}(x)\equiv 0,\; x\in\Sigma_{x_1(P)}.\]
Moreover, since $x_1$ direction can be chosen arbitrarily, we have actually shown that $\bar{u}$ is radially symmetric with respect to $P$.
This implies that $u$ is also radially symmetric about $P$. Since $P$ is arbitrarily chosen, we conclude that $u$ is constant.
Moreover, constant solution which satisfies \eqref{p_k} must be $u\equiv 0$.

\textbf{Critical Case.}

Let $\bar{u}$ be the Kelvin transform of $u$ centered at the origin, then
\begin{equation}
  P_k \bar{u} = |J_{\phi_r}|^{\frac{n+2k}{2n}}f(|J_{\phi_r}|^{-\frac{n-2k}{2n}}\bar{u}).
\end{equation}
The following is immediate since it is entirely the same as that in the subcritical case:
\begin{equation*}
  v^\lambda(x)\leq 0, \forall x\in\Sigma_\lambda\setminus\{0^\lambda\} \text{ for } \lambda<0 \text{ sufficiently large}.
\end{equation*}
Let
\[\Lambda:=\sup\{\lambda\leq 0: v^{\mu}(x)\leq 0, \forall x\in\Sigma_\mu\setminus\{P^\lambda\}, \mu\leq\lambda\}.\]
If $\Lambda<0$, similar to the subcritical case, we can show that $v^{\Lambda}\equiv 0, \forall x\in\Sigma_{\Lambda}$.
This means $0$ is not a singular point of $\bar{u}$ so that $u(x)=O(e^{-\frac{n-2k}{2n}r})$ when $|x|\to\infty$.
Then it follows the standard moving plane arguement \cite{LLW} that $u$ is symmetric about some point in $\hn$.

If $\Lambda=0$, then by moving the plane from $x_1=+\infty$ we are able to show that $\bar{u}$ is symmetric about the origin and so is $u$.
In any case, $u$ is symmetric about some point and is decreasing in the radial variable.

\subsection{Proof of Theorem \ref{thm3}}

 The pullback function of \eqref{soln} in $\rn$ is (up to rescaling)
\begin{equation}
  \phi(x)=\left(\frac{2a}{a^2+|x|^2}\right)^{\frac{n}{2}-k},
\end{equation}
by direct computation, or see \cite{Liu}. According to Theorem \ref{thm1}, the solution of \eqref{p_k} on $\hn$ is either \eqref{soln} or constant. If \eqref{radial_eq_eucl} admits a positive radial solution $u$, then $u=\phi$, and $\beta_1=\beta$ by definition. Since $U(0)=\alpha=(2/a)^{\frac{n}{2}-k}$

Otherwise, the solution of \eqref{p_k} degenerates to a constant function whose pullback in $\rn$ blows up on the boundary with $\beta_1<\beta$. For $\beta>\beta_1$, no such solution exists due to the explicit form of $u$.

\subsection{Proof of Corollary \ref{cor2}}
Similar to the proof of Theorem \ref{thm1}, we apply the method of moving spheres.  
Denote \[w_\lambda(x)= u_{\lambda}(x) - u(x)\]
and $q=\frac{n+2k}{n-2k}$, we have 
\begin{align}\label{inteq00}
  w_\lambda(x)&=\int_{ \hn\setminus B_{r_0}} G(x,y)\left(|J_{\phi_{\lambda}}|^{\frac{n+2k}{2n}}\tilde{Q}(x^\lambda)u^{q}(x^\lambda) - \tilde{Q}u^{q}\right)dV_y-\int_{B_{r_0}}G(x,y)\tilde{Q}u^qdV_y.
\end{align}
\begin{lemma}\
  For sufficiently small $\lambda>0$, there holds
  \begin{equation*}
    w_\lambda(x)\leq 0,\; \forall x\in B^c_\lambda(P).
  \end{equation*}
\end{lemma}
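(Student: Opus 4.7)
The approach follows the strategy of Lemma \ref{start1}, adapted to the variable-coefficient right-hand side $\tilde Q(y)u^q(y)$ with $q=\frac{n+2k}{n-2k}$. Starting from the Green's representation $u(x)=\int_\hn G(x,y)\tilde Q(y)u^q(y)\,dV_y$ supplied by Lemma \ref{lemma1} and its Kelvin-transformed counterpart for $u_\lambda$, I would split the domain of integration into the three pieces $B_{\lambda^\sharp}(P)$, the shell $B_\lambda(P)\setminus B_{\lambda^\sharp}(P)$, and the exterior $B_\lambda^c(P)$. After a change of variables $y\mapsto y^\lambda$ on the shell and using the Green-function identities
\begin{equation*}
|J_\lambda(y)|^{\frac{n-2k}{2n}}G(x,y^\lambda)=|J_\lambda(x)|^{\frac{n-2k}{2n}}G(x^\lambda,y),\quad|J_\lambda(x)|^{\frac{n-2k}{2n}}|J_\lambda(y)|^{\frac{n-2k}{2n}}G(x^\lambda,y^\lambda)=G(x,y)
\end{equation*}
established in Lemma \ref{start1}, together with the critical-exponent identity $|J_\lambda(y)|^{\frac{n+2k}{2n}}u^q(y^\lambda)=u_\lambda^q(y)$, I arrive at
\begin{equation*}
w_\lambda(x)=\int_{B_\lambda^c(P)}K(x,y)\bigl[\tilde Q(y^\lambda)u_\lambda^q(y)-\tilde Q(y)u^q(y)\bigr]\,dV_y + E(x,\lambda),
\end{equation*}
where $K(x,y)=G(x,y)-|J_\lambda(y)|^{\frac{n-2k}{2n}}G(x,y^\lambda)\geq 0$ on $B_\lambda^c\times B_\lambda^c$, and $E(x,\lambda)$ is the contribution from $B_{\lambda^\sharp}$. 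The error $E$ tends to zero as $\lambda\to 0$ by the Green's function estimate \eqref{pk_est} combined with the Lebesgue differentiation theorem, exactly as in Lemma \ref{start1}.

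The decisive new feature is that for $y\in B_\lambda^c$ the Kelvin image $y^\lambda$ lies strictly closer to $P$ than $y$, so the decreasing hypothesis on $\tilde Q$ forces $\tilde Q(y^\lambda)\geq\tilde Q(y)$. Splitting
\begin{equation*}
\tilde Q(y^\lambda)u_\lambda^q-\tilde Q(y)u^q = \tilde Q(y)\bigl(u_\lambda^q-u^q\bigr) + \bigl(\tilde Q(y^\lambda)-\tilde Q(y)\bigr)u_\lambda^q,
\end{equation*}
the second summand carries the \emph{wrong} sign for the standard moving-sphere argument; this is the principal obstacle. I would overcome it by exploiting quantitative smallness of the Kelvin transform on the exterior as $\lambda\to 0$: for each fixed $y$ with $\rho(y,P)>0$, the Jacobian formula \eqref{Jacobian} yields $|J_\lambda(y)|^{(n-2k)/(2n)}\to 0$ while $y^\lambda\to P$, so $u_\lambda(y)\to 0$ pointwise on $B_\lambda^c$. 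Combined with the uniform bound $|J_\lambda|^{(n-2k)/(2n)}\leq 1$ on $B_\lambda^c$ and local boundedness of $u$ near $P$, dominated convergence gives
\begin{equation*}
\int_{B_\lambda^c(P)}K(x,y)\bigl(\tilde Q(y^\lambda)-\tilde Q(y)\bigr)u_\lambda^q(y)\,dV_y\longrightarrow 0 \quad\text{as } \lambda\to 0.
\end{equation*}

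For the remaining good term I would restrict to $\Sigma_\lambda^-=\{y\in B_\lambda^c:w_\lambda(y)>0\}$, apply the elementary inequality $u_\lambda^q-u^q\leq q\, u_\lambda^{q-1}w_\lambda$ there, and bound $\tilde Q$ by $\|\tilde Q\|_{L^\infty_{loc}}$. The Hardy-Littlewood-Sobolev inequality on $\hn$ followed by H\"older's inequality produces, in direct analogy with \eqref{w_est}--\eqref{holder} of Lemma \ref{start1}, an estimate of the form
\begin{equation*}
\|w_\lambda\|_{L^{q}(\Sigma_\lambda^-)}\leq C\|u_\lambda\|_{L^{2n/(n-2k)}(\Sigma_\lambda^-)}^{q-1}\|w_\lambda\|_{L^{q}(\Sigma_\lambda^-)}.
\end{equation*}
The key change-of-variables identity $\int_{B_\lambda^c}u_\lambda^{2n/(n-2k)}\,dV=\int_{B_\lambda\setminus B_{\lambda^\sharp}}u^{2n/(n-2k)}\,dV$, together with $u\in L^{2n/(n-2k)}_{loc}$ and the fact that the shell shrinks to $\{P\}$ as $\lambda\to 0$, forces the coefficient on the right to be strictly less than $1$ for all sufficiently small $\lambda$. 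Hence $\|w_\lambda\|_{L^q(\Sigma_\lambda^-)}=0$, so $\Sigma_\lambda^-$ has zero measure and $w_\lambda\leq 0$ on $B_\lambda^c(P)$ for all sufficiently small $\lambda>0$.
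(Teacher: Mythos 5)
You follow the same overall architecture as the paper — the three-piece decomposition of $\hn\setminus B_{r_0}(P)$, the Kelvin change of variables on the shell using the Green-function identities, the $B_{r_0}$ control via the Green's function estimate and Lebesgue differentiation, and finally the Hardy--Littlewood--Sobolev plus H\"older contraction on $\Sigma_\lambda^-$. That scaffolding matches the paper's proof. The divergence is in handling the factor $\tilde Q$, and your patch for it has a genuine gap.

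You read the monotonicity hypothesis so that $\tilde Q(y^\lambda)\geq\tilde Q(y)$ for $y\in B_\lambda^c(P)$, producing a genuinely nonnegative extra term $\int_{B_\lambda^c}K(x,y)\bigl(\tilde Q(y^\lambda)-\tilde Q(y)\bigr)u_\lambda^q\,dV_y\geq 0$, which you try to kill by dominated convergence as $\lambda\to 0$. This does not close the argument. After applying HLS and H\"older to the "good" term you would only arrive at an inequality of the form
\begin{equation*}
  \|w_\lambda\|_{L^q(\Sigma_\lambda^-)}\leq c_\lambda\,\|w_\lambda\|_{L^q(\Sigma_\lambda^-)}+B_\lambda,\qquad c_\lambda<1,\; B_\lambda\geq 0,
\end{equation*}
and from this one can only conclude $\|w_\lambda\|_{L^q(\Sigma_\lambda^-)}\leq B_\lambda/(1-c_\lambda)$, which is strictly positive whenever $\tilde Q$ is nonconstant — precisely the situation the Corollary is trying to rule out. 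A small but nonzero inhomogeneous source cannot be neutralized by a contraction with coefficient $<1$; one needs the source to vanish identically. (The pointwise DCT argument is also shaky since the running point $x\in\Sigma_\lambda^-$ moves with $\lambda$, so there is no fixed $x$ at which to pass to the limit.) The paper, by contrast, uses the monotonicity so that the Kelvin-reflected factor satisfies $\tilde Q(y^\lambda)\leq\tilde Q(y)$: then on $B_\lambda^c\setminus\Sigma_\lambda^-$ the integrand $\tilde Q(y^\lambda)u_\lambda^q-\tilde Q(y)u^q\leq\tilde Q(y)(u_\lambda^q-u^q)\leq 0$ can be discarded, on $\Sigma_\lambda^-$ the factor $\tilde Q(y)$ is absorbed into a constant, and there is no bad term at all. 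If you believe the Corollary's hypothesis genuinely forces the unfavorable sign, then you have located a real issue in the paper, but your proposed repair does not work; as written your argument cannot conclude that $\Sigma_\lambda^-$ has measure zero.
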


\begin{proof}
  Let $\lambda>0$ and define
  \begin{equation*}
    \Sigma_\lambda^-=\{x\in B^c_\lambda(P):w_\lambda(x)>0\}.
  \end{equation*}
  In order to prove the lemma, it suffices to show that $\Sigma_\lambda^-$ has zero measure for sufficiently small $\lambda$.

  Noting the decomposition
  \[\hn\setminus B_{r_0}(P)=\phi(B^c_\lambda(P))\cup B^c_\lambda(P),\]
  we rewrite \eqref{inteq00} as
  \begin{equation}\label{inteq11}
    w_\lambda(x)=\int_{\phi(B^c_\lambda(P))\cup B^c_\lambda(P)} G(x,y)\left(|J_{\phi_{\lambda}}|^{\frac{n+2k}{2n}}\tilde{Q}(x^\lambda)u^{q}(x^\lambda) - \tilde{Q}u^q\right)dV_y-\int_{B_{r_0}}G(x,y)\tilde{Q}u^qdV_y.
  \end{equation}
  \medskip
  Next, the following is obtained by change of variables $y=z^\lambda, dV_y=|J_{\phi_\lambda}(z)|dV_z$.
  \begin{align}\label{inteq22}
    &\int_{\phi(B^c_\lambda(P))} G(x,y)\left( |J_{\phi_{\lambda}}(y)|^{\frac{n+2k}{2n}}\tilde{Q}(y^\lambda)u^q(y^\lambda) - \tilde{Q}u^q\right)dV_y \nonumber \\
    =& \int_{B^c_\lambda(P)} G(x,z^\lambda) \left(|J_{\phi_\lambda} (z^\lambda)|^{\frac{n+2k}{2n}}\tilde{Q}u^q - \tilde{Q}(z^\lambda)u^q(z^\lambda) \right) |J_{\phi_\lambda}(z)| dV_z\nonumber\\
    =&\int_{B^c_\lambda(P)} -|J_{\phi_\lambda}(z)|^{\frac{n-2k}{2n}} G(x,z^\lambda) \left(|J_{\phi_\lambda}(z)|^{\frac{n+2k}{2n}}\tilde{Q}(z^\lambda)u^{q}(z^\lambda) - \tilde{Q}(z)u^q(z) \right) dV_z.
  \end{align}

  Meanwhile, by noticing $\rho(x,y)>\lambda-r_0$ for $x\in\Sigma_\lambda^-$ and $y\in B_{r_0}(P)$,
  Green's function estimate of $P_k$ implies that
  \begin{equation*}
    G(x,y)\lesssim \frac{1}{\sinh(\lambda-r_0)^{n-2k}}\sim\frac{1}{(\lambda-r_0)^{n-2k}} \text{ as } r_0<\lambda\to 0.
  \end{equation*}
  Together with Lebesgue differentiation theorem, we obtain that
  \begin{align}\label{Control_on_B_r0}
    &\int_{B_{r_0}(P)} G(x,y)\tilde{Q}u^q(y)dV_y \lesssim \frac{1}{(\lambda-r_0)^{n-2k}}\int_{B_{r_0}(P)}\tilde{Q}u^q(y)dV_y\nonumber\\
    &\longrightarrow \frac{vol(B_{r_0})}{(\lambda-r_0)^{n-2k}}\tilde{Q}(P)u^q(P), \text{ as } r_0\to 0.
  \end{align}
As shown in the previous proof, this integral tends to $0$ for $r_0$ small.

Combining \eqref{inteq11}, \eqref{inteq22} and \eqref{Control_on_B_r0}, we get
\begin{align*}
  w_\lambda(x)=u_\lambda(x)-u(x)&= \int_{B^c_\lambda(P)}K(x,y)\left(|J_{\phi_\lambda} (y)|^{\frac{n+2k}{2n}}\tilde{Q}(y^\lambda)u^{q}(y^\lambda) - \tilde{Q}(y)u^q(y)\right)dV_y.
\end{align*}
where
\begin{equation*}
  K(x,y) = G(x,y) - |J_{\phi_\lambda}(y)|^{\frac{n-2k}{2n}}G(x,y^\lambda)>0,\quad x, y\in B^c_\lambda(P).
\end{equation*}

We then obtain the following when $x\in\Sigma^-_\lambda$,
  \begin{align*}
    0<w_\lambda(x) &= \int_{B^c_\lambda(P)}K(x,y)\left(|J_{\phi_\lambda} (y)|^{\frac{n+2k}{2n}}\tilde{Q}(y^\lambda)u^q(y^\lambda) - \tilde{Q}u^q\right)dV_y\nonumber\\
    &= \int_{B^c_\lambda(P)}K(x,y)\left(\tilde{Q}(y^\lambda)u^q_\lambda(y) - \tilde{Q}(y)u^q(y)\right)dV_y\nonumber\\
    &\leq \int_{B^c_\lambda(P)}K(x,y)\left(u^q_\lambda(y) - u^q(y)\right)dV_y,
  \end{align*}
where the last inequality follows the monotonicity and integrability of $\tilde{Q}$.
Moreover, by the definition of $K(x,y)$ and Green's function estimate for $P_k$,
  \begin{align*}
    w_\lambda(x)
    &\lesssim \int_{\Sigma_\lambda^-}G(x,y)\left(u^q_\lambda(y) - u^q(y)\right)dV_y\\
     &\leq \int_{\Sigma_\lambda^-}\left(\frac{1}{2\sinh\frac{\rho(x,y)}{2}}\right)^{n-2k}\left(u^q_\lambda(y) - u^q(y)\right)dV_y
  \end{align*}

Following the Hardy-Littlewood-Sobolev inequality on $\hn$ and H\"older's inequality, we get
  \begin{align*}
    \|w_\lambda(x)\|_{L^r(\Sigma_\lambda^-)}&\leq C\|u^q_\lambda(y) - u^q(y)\|_{L^{\frac{nq}{n+2kq}}(\Sigma^-_\lambda)}\\
    &\leq C\| u^{q-1}_\lambda(u_\lambda-u)\|_{L^{\frac{nq}{n+2kq}}(\Sigma^-_\lambda)}\\
    &\leq C\|u^{q-1}_\lambda(x)\|_{L^{\frac{n}{2k}}(\Sigma_\lambda^-)}\|w_\lambda(x)\|_{L^r(\Sigma_\lambda^-)}.
  \end{align*}
  Besides, notice that
  \begin{align*}
    &\|u^{q-1}_\lambda(x)\|_{L^{\frac{n}{2k}}(\Sigma_\lambda^-)}=C\left(\int_{\Sigma_\lambda^-}\left||J|^{\frac{2k}{n}}u^{q-1}(x^\lambda)\right|^{\frac{n}{2k}}dV_x\right)^{\frac{2k}{n}}\\
    &=C\left(\int_{\phi(\Sigma^-_\lambda)}|u^{q-1}(x)|^{\frac{n}{2k}}dV_x\right)^{\frac{2k}{n}}\leq C\left(\int_{B_\lambda\setminus B_{r_0}}|u(x)|^{\frac{2n}{n-2k}}dV_x\right)^{\frac{2k}{n}}.
  \end{align*}
  Since $u\in L_{loc}^{\frac{2n}{n-2k}}(\hn)$, we can choose $\lambda$ sufficiently small so that
  $C\|u\|_{L^{\frac{2n}{n-2k}}(\Sigma_\lambda^-)}<1$.
  Consequently, $\|w_\lambda(x)\|_{\Sigma_\lambda^-}=0$ and $\Sigma_\lambda^-$ has zero measure for $\lambda$ sufficiently small.
\end{proof}

The rest of proof is similar to the proof of Theorem \ref{thm1}. Since $u$ is radial, $\tilde{Q}u^{\frac{n+2k}{n-2k}}$ must take the form $cu^{\frac{n+2k}{n-2k}}$, i.e., $\tilde{Q}$ is constant.

\section*{Acknowledgements}
The authors wish to thank Debdip Ganguly for  the valuable discussions  on the part of the prescribed $Q$-curvature problem on the hyperbolic space.

\end{document}